\documentclass[12pt]{article}

\usepackage{amsfonts,amsmath,amsthm,amssymb,amscd}
\usepackage[dvips]{graphicx}
\usepackage{mathrsfs}

\theoremstyle{definition}
\newtheorem{theorem}{Theorem}

\numberwithin{equation}{section}
\numberwithin{theorem}{section}

\newcommand{\Ker}{\operatorname{Ker}}
\newcommand{\sign}{\operatorname{sign}}

\begin{document}

\begin{center}
{\bf{\Large Analogue of the theta group $\Gamma_{\theta},$ II }}
\end{center}

\begin{center}
By Kazuhide Matsuda
\end{center}

\begin{center}
Faculty of Fundamental Science, \\
National Institute of Technology (KOSEN), Niihama College,\\
7-1 Yagumo-chou, Niihama, Ehime, Japan, 792-8580. \\
E-mail: ka.matsuda@niihama-nct.ac.jp  \\
Fax: 81-0897-37-7809 
\end{center}

\noindent
{\bf Abstract}
In this series of papers, we introduce higher level versions of the theta group $\Gamma_{\theta}.$ 
In this paper, we treat the theta group of level $5$, $\Gamma_{\theta,5},$ 
and 
construct modular forms on $\Gamma_{\theta,5}$.  
Moreover 
we compute their multiplier systems. 
For this purpose, 
we derive transformation formula 
of theta function with characteristics. 
\newline
{\bf Key Words:} eta function; theta constant; multiplier system
\newline
{\bf MSC(2010)}  14K25;  11E25

\section{Introduction}
\label{intro}
The upper half plane $\mathscr{H}$ is defined by 
$
\mathscr{H}=
\{
\tau\in\mathbb{C} \, | \,  \Im \tau>0
\}. 
$
Moreover, set $q=\exp(2\pi i  \tau)$ and define 
the Dedekind $\eta$ function and the theta constant by 
\begin{equation*}
\displaystyle
\eta(\tau)=q^{\frac{1}{24}} \prod_{n=1}^{\infty} (1-q^n) 
\,\,\mathrm{and} \,\,
\theta(\tau)=\sum_{n=-\infty}^{\infty}  e^{\pi i n^2 \tau}.  
\end{equation*}
The modular group $\Gamma(1)$ and the theta group $\Gamma_{\theta}$ are defined by 
\begin{equation*}
\Gamma(1)=SL(2,\mathbb{Z}) \,\,
\mathrm{and} \,\,
\Gamma_{\theta}=
\left\{
\begin{pmatrix}
a & b \\
c & d
\end{pmatrix}
\in
\Gamma(1) \,\Big| \,
a\equiv d  \bmod{2} \,\,
\mathrm{and} \,\,
b \equiv c  \bmod{2} 
\right\}.
\end{equation*}
\par
It is well known that 
$\eta(\tau)$ is a modular form of weight $\displaystyle 1/2$ on $\Gamma(1)$ and 
$\theta(\tau)$ is a modular form of weight $\displaystyle 1/2$ on $\Gamma_{\theta}.$ 
In \cite{Matsuda1}, 
from the multiplier systems, $\nu_{\eta}, \nu_{\theta},$ of $\eta(\tau)$ and $\theta(\tau)$, 
we obtain many examples of modular groups. 
\par
In \cite{Matsuda2}, we  proposed the theta group of level $N$ in the following way: 
\begin{equation*}
\Gamma_{\theta,N}
=
\left\{
\begin{pmatrix}
a & b \\
c & d
\end{pmatrix}
\in
\Gamma(1) \,\Big| \,
a\equiv d  \bmod{N} \,\,
\mathrm{and} \,\,
b \equiv -c  \bmod{N} 
\right\}, \,\,
(N=1,2,3,\ldots).
\end{equation*}
In addition, we constructed modular forms on $\Gamma_{\theta,3}$ and $\Gamma_{\theta,4}$ and computed their multiplier systems. 
For this purpose, we used the Dedekind $\eta$ function, $\eta(\tau).$ 
\par
In this paper, we treat the theta group of level $5$:
\begin{equation*}
\Gamma_{\theta,5}
=
\left\{
\begin{pmatrix}
a & b \\
c & d
\end{pmatrix}
\in
\Gamma(1) \,\Big| \,
\begin{pmatrix}
a & b \\
c & d
\end{pmatrix}
\equiv 
\pm
\begin{pmatrix}
1 &0 \\
0 & 1
\end{pmatrix}, \,
\pm
\begin{pmatrix}
0 &-1 \\
1 & 0
\end{pmatrix}
\bmod{5}
\right\}.  \,\,
\end{equation*}
In order to construct modular forms on $\Gamma_{\theta,5}$,  
we use Farkas and Kra's theory of theta function with characteristics. 
\par
The aim of this paper 
is to 
construct modular forms on $\Gamma_{\theta,5}$ and compute their multiplier systems. 
Section \ref{sec:eta} 
describes the multiplier system $\nu_{\eta}$ of $\eta(\tau)$. 
Section \ref{sec:theta-function} treats Farkas and Kra's theory of theta function and derives transformation formula of theta function with characteristics. 
Sections \ref{sec:modularforms-(1/5,1/5),(1/5,9/5)} and \ref{sec:modularforms-(3/5,3/5),(3/5,7/5)} 
construct modular forms on $\Gamma_{\theta,5}$ and compute their multiplier systems. 
Moreover by considering the kernels of them,  
we discover some examples of modular groups. 
Section \ref{sec:coset} 
considers 
coset decomposition of $\Gamma(1)$ modulo $\Gamma_{\theta,5}$.


\section{The multiplier system of $\eta(\tau)$}
\label{sec:eta}

Knopp \cite[pp.51]{Knopp} proved that for each 
$
M
=
\begin{pmatrix}
a & b \\
c & d
\end{pmatrix}
\in
\Gamma(1),
$
\begin{equation}
\label{eqn-eta-multiplier}
\nu_{\eta}(M)=
\begin{cases}
\displaystyle
\left(
\frac{d}{c}
\right)^{*}   
\exp
\left\{
\frac{\pi i}{12}
[
(a+d)c-bd(c^2-1)-3c
]
\right\} 
&
\text{if $c$ is odd,}
\vspace{2mm}  \\
\displaystyle
\left(
\frac{c}{d}
\right)_{*}
\exp
\left\{
\frac{\pi i}{12}
[
(a+d)c-bd(c^2-1)+3d-3-3cd
]
\right\} 
&
\text{if $c$ is even,}
\end{cases}
\end{equation}
where for $c,d\in \mathbb{Z}$ with $(c,d)=1$ and $d\equiv 1 \bmod{2},$ 
\begin{equation*}
\left(
\frac{c}{d}
\right)^{*}=
\left(
\frac{c}{ |d|}
\right)
\mathrm{and} 
\left(
\frac{c}{d}
\right)_{*}
=
\left(
\frac{c}{ |d|}
\right)
(-1)^{
\frac{\sign c-1}{2}
\frac{\sign d-1}{2}
}. 
\end{equation*}

\section{Farkas and Kra's theory of theta functions}
\label{sec:theta-function}

\subsection{Definitions}
Following the work of Farkas and Kra \cite{Farkas-Kra}, 
we introduce the  theta function with characteristics, 
which is defined by 
\begin{align*}
\theta 
\left[
\begin{array}{c}
\epsilon \\
\epsilon^{\prime}
\end{array}
\right] (v, \tau) 
=
\theta 
\left[
\begin{array}{c}
\epsilon \\
\epsilon^{\prime}
\end{array}
\right] (v) 
:=&\sum_{n\in\mathbb{Z}} \exp
\left(2\pi i\left[ \frac12\left(n+\frac{\epsilon}{2}\right)^2 \tau+\left(n+\frac{\epsilon}{2}\right)\left(v+\frac{\epsilon^{\prime}}{2}\right) \right] \right), 
\end{align*}
where $\epsilon, \epsilon^{\prime}\in\mathbb{R}, \, v \in\mathbb{C},$ and $\tau\in\mathbb{H}^{2}.$ 
The theta constants are given by 
\begin{equation*}
\theta 
\left[
\begin{array}{c}
\epsilon \\
\epsilon^{\prime}
\end{array}
\right]
:=
\theta 
\left[
\begin{array}{c}
\epsilon \\
\epsilon^{\prime}
\end{array}
\right] (0, \tau).
\end{equation*}
In terms of Jacobi's original $\vartheta$ notation for theta functions,
\begin{equation}
\label{eqn:theta-null}
\vartheta_2=
\theta 
\left[
\begin{array}{c}
1 \\
0
\end{array}
\right], \,\,
\vartheta_3=
\theta 
\left[
\begin{array}{c}
0 \\
0
\end{array}
\right]=\theta(\tau), \,\,
\vartheta_4=
\theta 
\left[
\begin{array}{c}
0 \\
1
\end{array}
\right]. 
\end{equation}
Furthermore, 
we denote the derivative coefficient of the theta function by 
\begin{equation*}
\theta^{\prime} 
\left[
\begin{array}{c}
\epsilon \\
\epsilon^{\prime}
\end{array}
\right]
:=\left.
\frac{\partial}{\partial v} 
\theta 
\left[
\begin{array}{c}
\epsilon \\
\epsilon^{\prime}
\end{array}
\right] (v, \tau)
\right|_{v=0}. 
\end{equation*}
In particular, Jacobi's derivative formula is given by 
\begin{equation}
\label{eqn:Jacobi-derivative}
\theta^{\prime} 
\left[
\begin{array}{c}
1 \\
1
\end{array}
\right] 
=
-\pi 
\theta
\left[
\begin{array}{c}
0 \\
0
\end{array}
\right] 
\theta
\left[
\begin{array}{c}
1 \\
0
\end{array}
\right] 
\theta
\left[
\begin{array}{c}
0 \\
1
\end{array}
\right].  
\end{equation}

\subsection{Basic properties}
We first note that 
for $m,n\in\mathbb{Z},$ 
\begin{equation}
\label{eqn:integer-char}
\theta 
\left[
\begin{array}{c}
\epsilon \\
\epsilon^{\prime}
\end{array}
\right] (v+n+m\tau, \tau) =
\exp(2\pi i)\left[\frac{n\epsilon-m\epsilon^{\prime}}{2}-mv-\frac{m^2\tau}{2}\right]
\theta 
\left[
\begin{array}{c}
\epsilon \\
\epsilon^{\prime}
\end{array}
\right] (v,\tau),
\end{equation}
and 
\begin{equation}
\label{eqn:char-even}
\theta 
\left[
\begin{array}{c}
\epsilon +2m\\
\epsilon^{\prime}+2n
\end{array}
\right] 
(v,\tau)
=\exp(\pi i \epsilon n)
\theta 
\left[
\begin{array}{c}
\epsilon \\
\epsilon^{\prime}
\end{array}
\right] 
(v,\tau).
\end{equation}
Furthermore, 
it is easy to see that 
\begin{equation}
\label{eqn:character-minus}
\theta 
\left[
\begin{array}{c}
-\epsilon \\
-\epsilon^{\prime}
\end{array}
\right] (v,\tau)
=
\theta 
\left[
\begin{array}{c}
\epsilon \\
\epsilon^{\prime}
\end{array}
\right] (-v,\tau)
\,\,
\mathrm{and}
\,\,
\theta^{\prime} 
\left[
\begin{array}{c}
-\epsilon \\
-\epsilon^{\prime}
\end{array}
\right] (v,\tau)
=
-
\theta^{\prime} 
\left[
\begin{array}{c}
\epsilon \\
\epsilon^{\prime}
\end{array}
\right] (-v,\tau).
\end{equation}
\par
For $m,n\in\mathbb{R},$ 
we see that 
\begin{align}
\label{eqn:real-char}
&\theta 
\left[
\begin{array}{c}
\epsilon \\
\epsilon^{\prime}
\end{array}
\right] \left(v+\frac{m\tau +n }{2}, \tau\right)   \notag\\
&=
\exp(2\pi i)\left[
-\frac{mv}{2}-\frac{m^2\tau}{8}-\frac{m(\epsilon^{\prime}+n)}{4}
\right]
\theta 
\left[
\begin{array}{c}
\epsilon+m \\
\epsilon^{\prime}+n
\end{array}
\right] 
(v,\tau). 
\end{align}
We note that 
$\theta 
\left[
\begin{array}{c}
\epsilon \\
\epsilon^{\prime}
\end{array}
\right] \left(v, \tau\right)$ has only one zero in the fundamental parallelogram, 
which is given by 
$$
v=\frac{1-\epsilon}{2}\tau+\frac{1-\epsilon^{\prime}}{2}. 
$$

\subsection{Jacobi's triple product identity}
All the theta functions have infinite product expansions, which are given by 
\begin{align}
\theta 
\left[
\begin{array}{c}
\epsilon \\
\epsilon^{\prime}
\end{array}
\right] (v, \tau) &=\exp\left(\frac{\pi i \epsilon \epsilon^{\prime}}{2}\right) x^{\frac{\epsilon^2}{4}} z^{\frac{\epsilon}{2}}    \notag  \\
                           &\quad 
                           \displaystyle \times\prod_{n=1}^{\infty}(1-x^{2n})(1+e^{\pi i \epsilon^{\prime}} x^{2n-1+\epsilon} z)(1+e^{-\pi i \epsilon^{\prime}} x^{2n-1-\epsilon}/z),  \label{eqn:Jacobi-triple}
\end{align}
where $x=\exp(\pi i \tau)$ and $z=\exp(2\pi i v).$ 
Therefore, it follows from Jacobi's derivative formula (\ref{eqn:Jacobi-derivative}) that 
\begin{equation*}
\label{eqn:Jacobi}
\theta^{\prime} 
\left[
\begin{array}{c}
1 \\
1
\end{array}
\right](0,\tau) 
=
-2\pi 
q^{\frac18}
\prod_{n=1}^{\infty}(1-q^n)^3 
=
-2 \pi 
\eta^3(\tau), \,\,q=\exp(2\pi i \tau). 
\end{equation*}

\subsection{Transformation formula}

\begin{theorem}
\label{thm:transformation-(1,1)}
{\it
For each 
$
M
=
\begin{pmatrix}
a & b \\
c & d
\end{pmatrix}
\in
\Gamma(1),
$ 
we have 
\begin{equation}
\label{eqn:transformation-(1,1)}
\theta 
\left[
\begin{array}{c}
1 \\
1
\end{array}
\right] \left(\frac{v}{c\tau+d},  \frac{a\tau+b}{c \tau+d} \right)
=
\nu_{\eta}^3(M)
\cdot
(c\tau+d)^{\frac12}
\cdot
\exp(\pi i)
\left[
\frac{cv^2}{c\tau+d}
\right] 
\cdot
\theta
\left[
\begin{array}{c}
1 \\
1
\end{array}
\right](v,\tau).  
\end{equation}
}
\end{theorem}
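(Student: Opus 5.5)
We prove the formula by comparing both sides as functions of $v$ for fixed $\tau$ and $M$. Put $\tau'=M\tau=(a\tau+b)/(c\tau+d)$ and
\begin{equation*}
F(v)=\exp(-\pi i)\left[\frac{cv^2}{c\tau+d}\right]\theta\left[\begin{array}{c}1\\1\end{array}\right]\left(\frac{v}{c\tau+d},\tau'\right).
\end{equation*}
First we show that $F(v)$ satisfies the same quasi-periodicity as $\theta\left[\begin{array}{c}1\\1\end{array}\right](v,\tau)$ under $v\mapsto v+1$ and $v\mapsto v+\tau$. Write $v+m\tau+n$ with $(m,n)=(0,1)$ or $(1,0)$. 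Then
\begin{equation*}
\frac{v+m\tau+n}{c\tau+d}=\frac{v}{c\tau+d}+n'\tau'+m',
\end{equation*}
where $(n',m')$ is obtained from $(m,n)$ by the inverse matrix; concretely $(m\tau+n)/(c\tau+d)=(dm-cn)\tau'+(an-bm)$. Apply \eqref{eqn:integer-char} with the integers $dm-cn$ and $an-bm$, and combine it with the change of the Gaussian factor $\exp(-\pi i c(v+m\tau+n)^2/(c\tau+d))$. The quadratic terms then cancel, and we are left with the multipliers of $\theta\left[\begin{array}{c}1\\1\end{array}\right](v,\tau)$ up to sign factors $\exp(\pi i(\cdots))$ depending on the parities of $a,b,c,d,m,n$. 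These signs are the main bookkeeping obstacle. I would check them using $ad-bc=1$: for characteristic $[1,1]$ the sign in \eqref{eqn:integer-char} is $(-1)^{n-m}$, and we need $(-1)^{(an-bm)-(dm-cn)}$ times the extra sign from expanding the quadratic to equal $(-1)^{n-m}$. This reduces to checking parities like $a+c+ac\equiv 1$ and $b+d+bd\equiv1 \pmod 2$, which hold because $(a,c)$ and $(b,d)$ are not both even.

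Given the quasi-periodicity, $F(v)/\theta\left[\begin{array}{c}1\\1\end{array}\right](v,\tau)$ is elliptic. Both functions have exactly one simple zero in a fundamental parallelogram, at $v=0$ modulo the lattice. For $F$ this holds because $\theta\left[\begin{array}{c}1\\1\end{array}\right](w,\tau')$ vanishes exactly at $w\in\mathbb{Z}+\tau'\mathbb{Z}$, and $v\mapsto v/(c\tau+d)$ maps $\mathbb{Z}+\tau\mathbb{Z}$ onto $\mathbb{Z}+\tau'\mathbb{Z}$. So the quotient is an entire elliptic function, hence a constant $C(M,\tau)$ independent of $v$.

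To identify $C$, differentiate at $v=0$. The Gaussian factor contributes nothing to first order, so
\begin{equation*}
(c\tau+d)^{-1}\theta'\left[\begin{array}{c}1\\1\end{array}\right](0,\tau')=C\,\theta'\left[\begin{array}{c}1\\1\end{array}\right](0,\tau).
\end{equation*}
By the consequence of Jacobi's triple product above, $\theta'\left[\begin{array}{c}1\\1\end{array}\right](0,\tau)=-2\pi\eta^3(\tau)$. Using $\eta(M\tau)=\nu_\eta(M)(c\tau+d)^{1/2}\eta(\tau)$, which is the definition of the multiplier system $\nu_\eta$ with Knopp's branch convention from Section \ref{sec:eta}, we get
\begin{equation*}
C=(c\tau+d)^{-1}\nu_\eta^3(M)(c\tau+d)^{3/2}=\nu_\eta^3(M)(c\tau+d)^{1/2},
\end{equation*}
which is exactly the claimed formula. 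The only subtlety here is keeping the branch of $(c\tau+d)^{3/2}$ equal to $\left((c\tau+d)^{1/2}\right)^3$, consistent with Knopp's convention for $\nu_\eta$; we adopt that convention throughout. I expect the parity verification in the first step to be the main obstacle, and the rest is routine.
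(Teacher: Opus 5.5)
Your proposal is correct and takes essentially the same route as the paper. You match the quasi-periodicity of both sides under $v\mapsto v+1$ and $v\mapsto v+\tau$, rewriting the shifts via $ad-bc=1$; the sign check reduces to $a+c+ac\equiv b+d+bd\equiv 1 \pmod 2$. The ratio is then a holomorphic elliptic function, hence constant, and you identify the constant from the derivative at $v=0$ using $\theta^{\prime}$ at characteristic $[1,1]$, which equals $-2\pi\eta^3(\tau)$, together with the multiplier system $\nu_\eta$.
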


\begin{proof}
Since $ad-bc=1,$ it follows that
\begin{equation*}
1=-c(a\tau+b)+a(c\tau+d), \,\, \mathrm{and} \,\,
\tau=d(a\tau+b)-b(c\tau+d).
\end{equation*}
Equation (\ref{eqn:integer-char}) yields 
\begin{align*}
&
\theta 
\left[
\begin{array}{c}
1 \\
1
\end{array}
\right] \left(\frac{v+1}{c\tau+d},  \frac{a\tau+b}{c \tau+d} \right) 
= \, 
\theta 
\left[
\begin{array}{c}
1 \\
1
\end{array}
\right] \left(\frac{v -c(a\tau+b)+a(c\tau+d) }{c\tau+d},  \frac{a\tau+b}{c \tau+d} \right)  \\
=& \,
\theta 
\left[
\begin{array}{c}
1 \\
1
\end{array}
\right] 
\left(
\frac{v }{c\tau+d}               
-c\frac{a\tau+b}{c\tau+d}+a, 
\frac{a\tau+b}{c \tau+d} 
\right)  \\
=& \,
\exp(2 \pi i)
\left[
\frac12
\left\{
(1+a)(1-c)-1+2c
\right\}
+
\frac{cv }{c\tau+d} 
+
\frac{c }{ 2( c\tau+d )} 
\right]
\cdot
\theta 
\left[
\begin{array}{c}
1 \\
1
\end{array}
\right] 
\left(\frac{v}{c\tau+d},  \frac{a\tau+b}{c \tau+d} \right)  \\
=&\, 
\exp(2 \pi i)
\left[
\frac12+
\frac{cv }{c\tau+d} 
+
\frac{c }{ 2( c\tau+d )} 
\right]
\cdot
\theta 
\left[
\begin{array}{c}
1 \\
1
\end{array}
\right] 
\left(\frac{v}{c\tau+d},  \frac{a\tau+b}{c \tau+d} \right), 
\end{align*}
and
\begin{align*}
&
\theta 
\left[
\begin{array}{c}
1 \\
1
\end{array}
\right] \left(\frac{v+\tau}{c\tau+d},  \frac{a\tau+b}{c \tau+d} \right) 
= \, 
\theta 
\left[
\begin{array}{c}
1 \\
1
\end{array}
\right] \left(\frac{v +d(a\tau+b)-b(c\tau+d) }{c\tau+d},  \frac{a\tau+b}{c \tau+d} \right)  \\
=&\, 
\theta 
\left[
\begin{array}{c}
1 \\
1
\end{array}
\right]
\left(\frac{v  }{c\tau+d}+d\frac{a\tau+b}{c\tau+d}-b,  \frac{a\tau+b}{c \tau+d} \right)  \\
=& \, 
\exp(2 \pi i)
\left[
-
\frac12
\left\{
(1+b)(1+d)-1
\right\}
-
\frac{d v }{c\tau+d} 
-
\frac{d \tau }{ 2( c\tau+d )} 
\right]
\theta 
\left[
\begin{array}{c}
1 \\
1
\end{array}
\right] \left(\frac{v}{c\tau+d},  \frac{a\tau+b}{c \tau+d} \right)   \\
=& \,
\exp(2 \pi i)
\left[
\frac12
-
\frac{d v }{c\tau+d} 
-
\frac{d \tau }{ 2( c\tau+d )} 
\right]
\theta 
\left[
\begin{array}{c}
1 \\
1
\end{array}
\right] \left(\frac{v}{c\tau+d},  \frac{a\tau+b}{c \tau+d} \right).  
\end{align*}   
Moreover, 
equation (\ref{eqn:integer-char}) implies that 
\begin{align*}
&
\exp(\pi i)
\left[
\frac{c(v+1)^2}{c\tau+d}
\right] 
\theta 
\left[
\begin{array}{c}
1 \\
1
\end{array}
\right] 
(v+1, \tau)=
\exp(\pi i)
\left[
\frac{c v^2}{c\tau+d}
+
\frac{2cv}{c\tau+d}
+\frac{c}{c\tau+d}
+
1
\right]
\theta 
\left[
\begin{array}{c}
1 \\
1
\end{array}
\right] 
(v, \tau)  \\
=& \, 
\exp(2 \pi i)
\left[
\frac12+
\frac{cv}{c\tau+d}
+\frac{c}{ 2(c\tau+d)}
\right]
\exp(\pi i)
\left[
\frac{c v^2}{c\tau+d}
\right] 
\theta 
\left[
\begin{array}{c}
1 \\
1
\end{array}
\right] 
(v, \tau),
\end{align*}
and
\begin{align*}
&
\exp(\pi i)
\left[
\frac{c(v+\tau)^2}{c\tau+d}
\right] 
\theta 
\left[
\begin{array}{c}
1 \\
1
\end{array}
\right] 
(v+\tau, \tau)=
\exp(\pi i)
\left[
-1
-
\frac{2dv}{c\tau+d}
-
\frac{d\tau}{c\tau+d}
\right]
\exp(\pi i)
\left[
\frac{c v^2}{c\tau+d}
\right] 
\theta 
\left[
\begin{array}{c}
1 \\
1
\end{array}
\right] 
(v, \tau)  \\
=& \,
\exp(2 \pi i)
\left[
\frac12
-
\frac{dv}{c\tau+d}
-
\frac{d\tau}{ 2(c\tau+d) }
\right] 
\exp(\pi i)
\left[
\frac{cv^2}{c\tau+d}
\right] 
\theta 
\left[
\begin{array}{c}
1 \\
1
\end{array}
\right] 
(v, \tau). 
\end{align*}
\par
We note that  
$
\theta 
\left[
\begin{array}{c}
1 \\
1
\end{array}
\right] 
(v, \tau)
$ 
has 
only one zero $v=0$ 
in the fundamental parallelogram, 
which implies that
the function, 
\begin{equation*}
F(v)=
\frac{
\theta 
\left[
\begin{array}{c}
1 \\
1
\end{array}
\right] \left(\frac{v}{c\tau+d},  \frac{a\tau+b}{c \tau+d} \right) 
}
{
\exp(\pi i)
\left[
\frac{c v^2}{c\tau+d}
\right] 
\theta 
\left[
\begin{array}{c}
1 \\
1
\end{array}
\right] 
(v, \tau)
}
\end{equation*}
is an elliptic function with the period $1$ and $\tau,$ and is holomorphic. 
Therefore it follows that 
$F(v)$ is a constant, 
which implies that 
\begin{align*}
&
\frac{
\theta 
\left[
\begin{array}{c}
1 \\
1
\end{array}
\right] \left(\frac{v}{c\tau+d},  \frac{a\tau+b}{c \tau+d} \right) 
}
{
\exp(\pi i)
\left[
\frac{c v^2}{c\tau+d}
\right] 
\theta 
\left[
\begin{array}{c}
1 \\
1
\end{array}
\right] 
(v, \tau)
}
=
\lim_{v\to 0} 
\frac{
\theta 
\left[
\begin{array}{c}
1 \\
1
\end{array}
\right] \left(\frac{v}{c\tau+d},  \frac{a\tau+b}{c \tau+d} \right) 
}
{
\exp(\pi i)
\left[
\frac{c v^2}{c\tau+d}
\right] 
\theta 
\left[
\begin{array}{c}
1 \\
1
\end{array}
\right] 
(v, \tau)
} 
=
\frac{
\theta^{\prime} 
\left[
\begin{array}{c}
1 \\
1
\end{array}
\right] 
\left(
0, \frac{a\tau+b}{c\tau+d}
\right)
}
{
\theta^{\prime} 
\left[
\begin{array}{c}
1 \\
1
\end{array}
\right] 
\left(
0, \tau
\right)
} 
\cdot
\frac{1}{c\tau+d}
 \\
=&
\frac
{
-2 \pi 
\eta^3
\left(
\frac{a\tau+b}{c\tau+d}
\right)
}
{
-2\pi
\eta^3(\tau)
} 
\cdot
\frac{1}{c\tau+d}
=
\left\{
\nu_{\eta}(M) (c\tau+d)^{\frac12}
\right\}^3
\cdot
\frac{1}{c\tau+d}
=
\nu_{\eta}^3(M) 
(c\tau+d)^{\frac12}.
\end{align*}
\end{proof}

\begin{theorem}
\label{thm:transformation-general}
{\it
For each 
$
M
=
\begin{pmatrix}
a & b \\
c & d
\end{pmatrix}
\in
\Gamma(1),
$ 
we have 
\begin{align*}
&
\theta 
\left[
\begin{array}{c}
\epsilon \\
\epsilon^{\prime}
\end{array}
\right] \left(\frac{v}{c\tau+d},  \frac{a\tau+b}{c \tau+d} \right)  \notag \\
=&
\nu_{\eta}^3(M)
\cdot
(c\tau+d)^{\frac12}
\cdot
\exp(\pi i)
\left[
\frac{cv^2}{c\tau+d}
\right] 
\cdot
\theta
\left[
\begin{array}{c}
a\epsilon+c\epsilon^{\prime}-ac \\
b\epsilon+d\epsilon^{\prime}-bd
\end{array}
\right](v,\tau)  \notag  \\ 
&\times
\exp(\pi i)
\Bigg[
\left(
a\cdot\frac{1-\epsilon}{2}+c\cdot\frac{1-\epsilon^{\prime}}{2}
\right)
(b+d-bd)  
+
\frac{ (b-1)(d-1) }{2}   -\frac{1-\epsilon}{2}    \notag \\
&
\hspace{25mm}
-b\cdot \frac{1-\epsilon}{2}
\left(
a\cdot\frac{1-\epsilon}{2}+c\cdot\frac{1-\epsilon^{\prime}}{2}
\right)
-c
\cdot
\frac{1-\epsilon^{\prime}}{2} 
\left(
b\cdot\frac{1-\epsilon}{2}+d\cdot\frac{1-\epsilon^{\prime}}{2}
\right)  
\,
\Bigg]. 
\end{align*}
}
\end{theorem}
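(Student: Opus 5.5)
The plan is to reduce the general characteristic to the case $\left[\begin{smallmatrix}1\\1\end{smallmatrix}\right]$ by a shift of the variable, using (\ref{eqn:real-char}), and then apply Theorem \ref{thm:transformation-(1,1)}. Put $m=\epsilon-1$ and $n=\epsilon^{\prime}-1$. Reading (\ref{eqn:real-char}) with characteristic $\left[\begin{smallmatrix}1\\1\end{smallmatrix}\right]$ and shift $(m\tau+n)/2$ expresses $\theta\left[\begin{smallmatrix}\epsilon\\\epsilon^{\prime}\end{smallmatrix}\right](v,\tau)$ as an explicit exponential factor times $\theta\left[\begin{smallmatrix}1\\1\end{smallmatrix}\right]\left(v+\frac{m\tau+n}{2},\tau\right)$. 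I apply this at the modular argument $\tau'=\frac{a\tau+b}{c\tau+d}$ with $v'=\frac{v}{c\tau+d}$. The shifted variable is then
\begin{equation*}
\frac{v}{c\tau+d}+\frac{m\tau'+n}{2}=\frac{v+\frac{(ma+nc)\tau+(mb+nd)}{2}}{c\tau+d},
\end{equation*}
using $m(a\tau+b)+n(c\tau+d)=(ma+nc)\tau+(mb+nd)$. This has exactly the shape $\frac{w}{c\tau+d}$ needed in Theorem \ref{thm:transformation-(1,1)}, with $w=v+\frac{m'\tau+n'}{2}$, where $m'=am+cn$ and $n'=bm+dn$.

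Next I apply Theorem \ref{thm:transformation-(1,1)} to the left side at $w$. This produces $\nu_\eta^3(M)(c\tau+d)^{1/2}\exp\left(\pi i\,\frac{cw^2}{c\tau+d}\right)\theta\left[\begin{smallmatrix}1\\1\end{smallmatrix}\right](w,\tau)$. Applying (\ref{eqn:real-char}) once more, now at argument $\tau$, converts $\theta\left[\begin{smallmatrix}1\\1\end{smallmatrix}\right]\left(v+\frac{m'\tau+n'}{2},\tau\right)$ into an exponential times $\theta\left[\begin{smallmatrix}1+m'\\1+n'\end{smallmatrix}\right](v,\tau)$. One checks that $1+m'=a\epsilon+c\epsilon^{\prime}+1-a-c$ and $1+n'=b\epsilon+d\epsilon^{\prime}+1-b-d$. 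These differ from the target characteristic $\left[\begin{smallmatrix}a\epsilon+c\epsilon^{\prime}-ac\\ b\epsilon+d\epsilon^{\prime}-bd\end{smallmatrix}\right]$ by the even integers $1-a-c+ac=(1-a)(1-c)$ and $1-b-d+bd=(1-b)(1-d)$. These are even because $ad-bc=1$ forbids $a,c$ both even and forbids $b,d$ both even. So (\ref{eqn:char-even}) finishes the characteristic change, at the cost of the factor $\exp(\pi i\,\tilde\epsilon\,(1-b)(1-d)/2)$ with $\tilde\epsilon$ the upper entry. This parity observation is also where the factor $\frac{(b-1)(d-1)}{2}$ in the statement comes from.

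The remaining, and main, work is bookkeeping of the exponential factors, and I expect it to be the main obstacle. Three checks are needed.
\begin{itemize}
\item The $v$-dependent terms must cancel except for $\pi i\,cv^2/(c\tau+d)$. This comes from $c w^2$ combined with the two $-mv/2$ type terms, using $cm'-(c\tau+d)\cdot$(stuff) identities.
\item The $\tau$-dependent terms $m^2\tau'/8$, $m'^2\tau/8$ and the $\tau$-part of $cw^2/(c\tau+d)$ must cancel exactly, via $ad-bc=1$. Here I would expand $(m'\tau+n')^2c/(c\tau+d)$ and $m^2(a\tau+b)/(c\tau+d)$ and verify the numerator is divisible by $c\tau+d$.
\item The leftover constant must be rewritten in terms of $\frac{1-\epsilon}{2}=-m/2$ and $\frac{1-\epsilon^{\prime}}{2}=-n/2$ to match the displayed bracket, modulo $2$ in the exponent of $\exp(\pi i\,\cdot)$.
\end{itemize}
I would do the last step by collecting everything as a quadratic polynomial in $m,n$ and comparing coefficients with the stated expression. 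Throughout I would use freely that integer multiples of $2$ in the exponent may be dropped, together with parity facts from $ad-bc=1$.
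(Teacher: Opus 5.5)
Your proposal is correct and is essentially the paper's own proof: the paper substitutes exactly your $w=v+\frac{\epsilon-1}{2}(a\tau+b)+\frac{\epsilon^{\prime}-1}{2}(c\tau+d)$ into both sides of Theorem \ref{thm:transformation-(1,1)}, applies (\ref{eqn:real-char}) on each side, and then uses the evenness of $(1-a)(1-c)$ and $(1-b)(1-d)$ together with (\ref{eqn:char-even}) before reducing the exponent modulo $2$. The bookkeeping you deferred does work out as you predict: the linear $v$-terms cancel because $c(m'\tau+n')-m'(c\tau+d)=-m$, the $\tau$-dependent numerator factors as $(c\tau+d)(abm^2+2bcmn+cdn^2)$, and the remaining constant reduces to the displayed bracket once you discard the even integer $\frac{1}{2}(1-a)(1-c)(1-b)(1-d)$.
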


\begin{proof}
We substitute 
$
\displaystyle
v
+
\frac{-1+\epsilon}{2}(a\tau+b)
+
\frac{-1+\epsilon^{\prime} }{2}(c\tau+d)
$
into 
both sides of equation (\ref{eqn:transformation-(1,1)}). 
By equation (\ref{eqn:real-char}), we have 
\begin{align*}
&
\theta 
\left[
\begin{array}{c}
1 \\
1
\end{array}
\right] \left(\frac{v
+
\frac{-1+\epsilon}{2}(a\tau+b)
+
\frac{-1+\epsilon^{\prime} }{2} ( c\tau+d)}{c\tau+d},  \frac{a\tau+b}{c \tau+d} \right)  \\
=&\, 
\theta 
\left[
\begin{array}{c}
1 \\
1
\end{array}
\right] 
\left(
\frac{v}{c\tau+d}   
+
\frac{-1+\epsilon}{2}
\cdot
\frac{a\tau+b}{c \tau+d}
+
\frac{-1+\epsilon^{\prime} }{2}, 
 \frac{a\tau+b}{c \tau+d}
\right) \\
=& \, 
\exp(\pi i)
\left[
-\frac14(-1+\epsilon)^2 \cdot 
\frac{a\tau+b}{c \tau+d}
-
( -1+\epsilon^{\prime}) \cdot
\frac{v}{c\tau+d} 
-
\frac12 
(-1+\epsilon)
\epsilon^{\prime}
\right] 
\cdot
\theta 
\left[
\begin{array}{c}
\epsilon \\
\epsilon^{\prime}
\end{array}
\right] 
\left(
\frac{v}{c\tau+d}, 
\frac{a\tau+b}{c \tau+d} 
\right),
\end{align*} 
and 
\begin{align*}
&
\exp(\pi i)
\left[
\frac{c}{c\tau+d} 
\left\{
v
+
\frac{-1+\epsilon}{2}(a\tau+b)
+
\frac{-1+\epsilon^{\prime} }{2}(c\tau+d)
\right\}^2
\right]  \\
&\times
\theta 
\left[
\begin{array}{c}
1 \\
1
\end{array}
\right] 
\left(
v
+
\frac{-1+\epsilon}{2}(a\tau+b)
+
\frac{-1+\epsilon^{\prime} }{2}(c\tau+d), 
\tau
\right) \\
=&
\exp (\pi i) 
\Bigg[
\frac{cv^2}{c\tau+d} 
+
\frac{  (1-\epsilon)^2}{4} ab 
-
\frac{(1-\epsilon)^2}{4} 
\frac{a\tau+b}{c \tau+d} 
+
\frac{(1-\epsilon^{\prime})^2}{4} cd
-
\frac{(-1+\epsilon)v}{c\tau+d}  \\
&\hspace{15mm}
+
\frac{(1-\epsilon) (1-\epsilon^{\prime}) }{2} bc 
-
\frac12
\left\{
a(\epsilon-1)+c(\epsilon^{\prime}-1)
\right\}
\cdot
\left\{
b(\epsilon-1)+d(\epsilon^{\prime}-1)+1
\right\}
\Bigg] \\
&\times
\theta 
\left[
\begin{array}{c}
a\epsilon+c\epsilon^{\prime}-a-c+1 \\
b\epsilon+d\epsilon^{\prime}-b-d+1
\end{array}
\right] 
(v,\tau),  
\end{align*}
which implies that 
\begin{align*}
&
\theta 
\left[
\begin{array}{c}
\epsilon \\
\epsilon^{\prime}
\end{array}
\right] \left(\frac{v}{c\tau+d},  \frac{a\tau+b}{c \tau+d} \right)   \\
=&
\nu_{\eta}^3(M)
\cdot
(c\tau+d)^{\frac12}
\cdot
\exp(\pi i)
\left[
\frac{cv^2}{c\tau+d}
\right]
\cdot
\theta 
\left[
\begin{array}{c}
a\epsilon+c\epsilon^{\prime}-a-c+1 \\
b\epsilon+d\epsilon^{\prime}-b-d+1
\end{array}
\right] 
(v,\tau)  \\
&\times
\exp(\pi i) 
\left[
-
\frac{  (1-\epsilon)^2}{4} ab 
-
\frac{(1-\epsilon^{\prime})^2}{4} cd
-
\frac12 (1-\epsilon)
-
\frac{(1-\epsilon) (1-\epsilon^{\prime}) }{2} bc 
-
\frac12
\left\{
a(\epsilon-1)+c(\epsilon^{\prime}-1)
\right\}
\right]. 
\end{align*}
\par
Since $ad-bc=1,$ it follows that 
\begin{equation*}
(1-a)(1-c)\equiv(1-b)(1-d) \equiv 0 \bmod{2}, 
\end{equation*}
which implies that 
\begin{align*}
a\epsilon+c\epsilon^{\prime}-a-c+1
=& \,
a\epsilon+c\epsilon^{\prime}
-
ac
+
2
\cdot 
\frac12
(1-a)(1-c),  \\
b\epsilon+d\epsilon^{\prime}-b-d+1
=& \,
b\epsilon+d\epsilon^{\prime}
-
bd
+
2
\cdot 
\frac12
(1-b)(1-d). 
\end{align*}
By equation (\ref{eqn:char-even}), 
we have 
\begin{align*}
&
\theta 
\left[
\begin{array}{c}
\epsilon \\
\epsilon^{\prime}
\end{array}
\right] \left(\frac{v}{c\tau+d},  \frac{a\tau+b}{c \tau+d} \right)   \\
=&
\nu_{\eta}^3(M)
\cdot
(c\tau+d)^{\frac12}
\cdot
\exp(\pi i)
\left[
\frac{cv^2}{c\tau+d}
\right]
\cdot 
\theta 
\left[
\begin{array}{c}
a\epsilon+c\epsilon^{\prime}-ac  \\
b\epsilon+d\epsilon^{\prime}-bd
\end{array}
\right] 
(v,\tau) 
\cdot
\exp(\pi i)[ E],
\end{align*} 
where 
\begin{align*}
E=&
(a\epsilon+c\epsilon^{\prime}-ac  )\cdot 
\frac12
(1-b)(1-d) \\
& 
-
\frac{  (1-\epsilon)^2}{4} ab 
-
\frac{(1-\epsilon^{\prime})^2}{4} cd
-
\frac12 (1-\epsilon)
-
\frac{(1-\epsilon) (1-\epsilon^{\prime}) }{2} bc 
-
\frac12
\left\{
a(\epsilon-1)+c(\epsilon^{\prime}-1)
\right\}  \\
=&
\{
a(\epsilon-1)
+
c(\epsilon^{\prime}-1)
-
(1-a)(1-c)+1
\}
\cdot
\frac12
(1-b)(1-d) 
-
\frac12 (1-\epsilon)
\\
&
+
\frac{b}{4}
(1-\epsilon)
\{
a(\epsilon-1)
+
c(\epsilon^{\prime}-1)
\} 
+
\frac{c}{4}
(1-\epsilon^{\prime})
\{
b(\epsilon-1)
+
d(\epsilon^{\prime}-1)
\}  
-
\frac12
\{
a(\epsilon-1)
+
c(\epsilon^{\prime}-1)
\}   \\
\equiv & 
\{
a(\epsilon-1)
+
c(\epsilon^{\prime}-1)
\}
\cdot
\frac12
(1-b)(1-d)
+
\frac12
(1-b)(1-d)
-
\frac12 (1-\epsilon)
\\
&
+
\frac{b}{4}
(1-\epsilon)
\{
a(\epsilon-1)
+
c(\epsilon^{\prime}-1)
\} 
+
\frac{c}{4}
(1-\epsilon^{\prime})
\{
b(\epsilon-1)
+
d(\epsilon^{\prime}-1)
\}  
-
\frac12
\{
a(\epsilon-1)
+
c(\epsilon^{\prime}-1)
\}    \bmod{2} \\
=& \, 
\left(
a\cdot\frac{1-\epsilon}{2}+c\cdot\frac{1-\epsilon^{\prime}}{2}
\right)
(b+d-bd)  
+
\frac{ (b-1)(d-1) }{2} -\frac{1-\epsilon}{2}   \notag \\
&
\hspace{25mm}
-b\cdot \frac{1-\epsilon}{2}
\left(
a\cdot\frac{1-\epsilon}{2}+c\cdot\frac{1-\epsilon^{\prime}}{2}
\right)
-c
\cdot
\frac{1-\epsilon^{\prime}}{2} 
\left(
b\cdot\frac{1-\epsilon}{2}+d\cdot\frac{1-\epsilon^{\prime}}{2}
\right),  
\end{align*}
which proves the theorem. 
\end{proof}

\section{Modular forms on $\Gamma_{\theta,5}$ (1) }
\label{sec:modularforms-(1/5,1/5),(1/5,9/5)}

\subsection{The case where 
$
M
\equiv
\begin{pmatrix}
1 & 0 \\
0 & 1
\end{pmatrix} 
\bmod{5}. 
$ 
}

\begin{theorem}
\label{thm:(1/5,1/5),(1/5,9/5)-I}
{\it
Set 
$
M
=
\begin{pmatrix}
a & b \\
c & d
\end{pmatrix}
\in
\Gamma_{\theta,5}
$ 
with 
$
M
\equiv
\begin{pmatrix}
1 & 0 \\
0 & 1
\end{pmatrix} 
\bmod{5}. 
$ 
Then the multiplier system of 
$
\theta 
\left[
\begin{array}{c}
\frac15 \vspace{1mm} \\
\frac15
\end{array}
\right] \left(0, \tau  \right) 
\theta 
\left[
\begin{array}{c}
\frac15 \vspace{1mm} \\
\frac95
\end{array}
\right] \left(0, \tau  \right) 
$ 
is given by 
\begin{equation*}
\nu(M)
=
\nu_{\eta}^6(M)
\exp
\left[
\frac{\pi i}{5}
\left(
-
\frac{4b}{5}
-
\frac{8ab}{5}
-
\frac{8cd}{5}
\right)
\right]. 
\end{equation*}
}
\end{theorem}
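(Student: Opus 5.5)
The plan is to apply Theorem \ref{thm:transformation-general} with $v=0$ to each of the two factors $\theta\left[\begin{array}{c}\frac15\\ \frac15\end{array}\right]$ and $\theta\left[\begin{array}{c}\frac15\\ \frac95\end{array}\right]$ separately, and then multiply. At $v=0$ the factor $\exp(\pi i)[cv^2/(c\tau+d)]$ disappears. Each factor therefore transforms with $\nu_\eta^3(M)(c\tau+d)^{1/2}$ times an explicit root of unity. The new characteristic is
\[
\left[\begin{array}{c} a\epsilon+c\epsilon'-ac\\ b\epsilon+d\epsilon'-bd\end{array}\right].
\]
The product of the two $\nu_\eta^3(M)$ gives $\nu_\eta^6(M)$, and the weights combine to $(c\tau+d)^{1}$.

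Next I reduce the transformed characteristics back to the original ones. Write $a=1+5\alpha$, $d=1+5\delta$, $b=5\beta$, $c=5\gamma$. For $(\epsilon,\epsilon')=(\frac15,\frac15)$ the new top entry is $\frac{a+c}{5}-ac=\frac15+\alpha+\gamma-ac$, and similarly for the bottom entry. I then split off an even integer shift, choosing $m,n$ so that the shifted characteristic equals $[\frac15,\frac15]$ up to $\pm$, and apply (\ref{eqn:char-even}). This needs a parity check: $\alpha+\gamma-ac$ must be even. If it is odd, I instead use (\ref{eqn:character-minus}) at $v=0$ together with an even shift (for instance $\frac15+\text{odd}$ versus $-\frac15$ or $\frac95$). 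Since the pair $\{\frac15,\frac95\}$ is symmetric under $\epsilon'\mapsto 2-\epsilon'$, the two factors may swap roles, and the product is still recovered.

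For the bookkeeping, I would note that $1-\epsilon=\frac45$ and $1-\epsilon'=\frac45$ or $-\frac45$, so the explicit exponent $E$ in Theorem \ref{thm:transformation-general} is a polynomial in $a,b,c,d$ with coefficients in $\frac1{25}\mathbb{Z}$. The factor from (\ref{eqn:char-even}) is $\exp(\pi i\,\epsilon n)$ with $\epsilon\equiv\frac15$ modulo integers and $n$ an integer shift. I then add the two $E$'s and the two correction phases and reduce modulo $2$. The intended simplifications are the following.
\begin{itemize}
\item The $\epsilon'$-dependent cross terms of the two factors cancel or double, because $1-\frac15=\frac45$ and $1-\frac95=-\frac45$.
\item Every term with denominator $2$ or $4$ must disappear, using $ad-bc=1$ and parity relations such as $(1-a)(1-c)\equiv 0 \bmod 2$.
\item Modulo $2$, each remaining term is an integer combination of $b/5$, $ab/25$, $cd/25$ and $c/5$, with $c\equiv 0 \bmod 5$, which should reduce to $\frac{1}{5}\left(-\frac{4b}{5}-\frac{8ab}{5}-\frac{8cd}{5}\right)$.
\end{itemize}

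The main obstacle is this last phase computation. The terms involving $\frac{(b-1)(d-1)}{2}$, the halves $\frac{1-\epsilon}{2}$, and the sign or shift corrections from reducing the characteristic all depend on the parities of $a,b,c,d$. I would organize the reduction by the parity classes of $(a,b,c,d)$ allowed by $ad-bc=1$: $c$ even and $d$ odd, or $c$ odd. In each class I would check that the half-integer and quarter-integer contributions from the two factors combine to an even integer. This pairing is exactly why the product, rather than either factor alone, has a clean multiplier. I would also use $b\equiv c\equiv 0$ and $a\equiv d\equiv 1 \bmod 5$ to rewrite terms like $\frac{b\epsilon}{5}$ and $\frac{\epsilon\epsilon'ac}{1}$ into the stated form.
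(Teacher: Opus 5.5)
Your route is the paper's: Theorem~\ref{thm:transformation-general} at $v=0$ for both factors, an even-integer shift of the new characteristics via (\ref{eqn:char-even}), and reduction of the summed exponent modulo $2$. However, the proposal stops before the only substantive step, and your handling of the characteristic reduction is not right as written.

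Since $x/5\equiv x$ modulo $2$ for $x\in 5\mathbb{Z}$, the shifts satisfy $\frac{a-1}{5}+\frac{c}{5}-ac\equiv\frac{a-1}{5}+\frac{9c}{5}-ac\equiv-(1-a)(1-c)$ and $\frac{b}{5}+\frac{d-1}{5}-bd\equiv\frac{b}{5}+\frac{9(d-1)}{5}-bd\equiv-(1-b)(1-d)$ modulo $2$. These are even because $\gcd(a,c)=\gcd(b,d)=1$. So the parity check never fails, and each factor returns to its own characteristic. No use of (\ref{eqn:character-minus}) and no swapping of roles is needed.

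This matters, because your fallback could not work. A value $\frac15$ plus an odd integer is congruent modulo $2$ to neither $\frac15$ nor $-\frac15\equiv\frac95$. Relatedly, for $M\equiv I$ each factor alone already transforms into itself, so the pairing is not why the product has a clean multiplier. It only cancels the $bc$-terms and turns $\frac{(b-1)(d-1)}{2}$ into an even integer. The product is genuinely needed for the classes $M\equiv\pm\bigl(\begin{smallmatrix}0&-1\\1&0\end{smallmatrix}\bigr)$, which interchange the two characteristics.

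The phase then needs no case split by parity classes. Adding the two exponents of Theorem~\ref{thm:transformation-general}, the $bc$- and $c(b+d-bd)$-terms cancel, leaving $\frac{4a}{5}(b+d-bd)+(b-1)(d-1)-\frac45-\frac{8ab}{25}-\frac{8cd}{25}$. The two factors $\exp(\pi i\cdot\frac15 n)$ from (\ref{eqn:char-even}), where $2n$ runs over the two bottom shifts above, add $\frac{b}{25}+\frac{d-1}{5}-\frac{bd}{5}$. Using $b+d-bd=1-(b-1)(d-1)$, the total exponent is exactly
\[
-\frac{4b}{25}-\frac{(b-1)(d-1)}{5}+\frac{4(a-1)}{5}-\frac{4a}{5}(b-1)(d-1)+(b-1)(d-1)-\frac{8ab}{25}-\frac{8cd}{25}.
\]
The four middle terms are even integers:
\begin{itemize}
\item $\frac{4(a-1)}{5}$ and $\frac{4a}{5}(b-1)(d-1)=4a(b-1)\frac{d-1}{5}$ are multiples of $4$;
\item $(b-1)(d-1)$ is even;
\item $\frac{(b-1)(d-1)}{5}=(b-1)\frac{d-1}{5}\equiv(b-1)(d-1)\equiv 0$ modulo $2$.
\end{itemize}
What remains is $\frac15\bigl(-\frac{4b}{5}-\frac{8ab}{5}-\frac{8cd}{5}\bigr)$, which is precisely the paper's computation.
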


\begin{proof}
Since $ad-bc=1,$ 
it follows that 
\begin{equation*}
(a,c)\equiv (1,1),(1,0),(0,1) \bmod{2} \, \mathrm{and} \,
(b,d)\equiv (1,1),(1,0),(0,1) \bmod{2},
\end{equation*}
which implies that 
\begin{align*}
&
\frac{a}{5}+\frac{c}{5}-ac-\frac15 =
\frac{a-1}{5}+\frac{c}{5}-ac \equiv 0 \bmod{2}, 
& 
&
\frac{b}{5}+\frac{d}{5}-bd-\frac15 =
\frac{b}{5}+\frac{d-1}{5}-bd \equiv 0 \bmod{2},  \\
&
\frac{a}{5}+\frac{9c}{5}-ac-\frac15 =
\frac{a-1}{5}+\frac{9c}{5}-ac \equiv 0 \bmod{2}, 
& 
&
\frac{b}{5}+\frac{9d}{5}-bd-\frac95 =
\frac{b}{5}+\frac{9(d-1)}{5}-bd \equiv 0 \bmod{2}. 
\end{align*} 
By equation (\ref{eqn:char-even}) and 
Theorem \ref{thm:transformation-general}, 
we find that 
\begin{align*}
&
\theta 
\left[
\begin{array}{c}
\frac15 \vspace{1mm} \\
\frac15
\end{array}
\right] \left(0, M\tau  \right) 
\theta 
\left[
\begin{array}{c}
\frac15 \vspace{1mm} \\
\frac95
\end{array}
\right] \left(0, M\tau  \right) \\
=&
\nu_{\eta}^6(M)
\cdot
(c\tau+d) \cdot
\theta 
\left[
\begin{array}{c}
a/5+c/5-ac \vspace{1mm} \\
b/5+d/5-bd
\end{array}
\right] \left(0, \tau  \right) 
\theta 
\left[
\begin{array}{c}
a/5+9c/5-ac \vspace{1mm} \\
b/5+9d/5-bd
\end{array}
\right] \left(0, \tau  \right) \\
&
\times
\exp
(\pi i)
\left[
\frac{4a}{5}
(b+d-bd)
-
\frac{8}{25}
ab
-
\frac{8}{25}
cd
-\frac45
\right]  \\
=&
\nu_{\eta}^6(M)
\cdot
(c\tau+d) \cdot
\theta 
\left[
\begin{array}{c}
\frac15 \vspace{1mm} \\
\frac15
\end{array}
\right] \left(0, \tau  \right) 
\theta 
\left[
\begin{array}{c}
\frac15 \vspace{1mm} \\
\frac95
\end{array}
\right] \left(0, \tau  \right) \\
&
\times
\exp(\pi i)
\left[
\frac15 \cdot \frac12
\left(
\frac{b}{5}
+\frac{d-1}{5}
-bd
\right)
\right] 
\cdot
\exp(\pi i)
\left[
\frac15 \cdot \frac12
\left(
\frac{b}{5}
+\frac{9(d-1)}{5}
-bd
\right)
\right]   \\
&
\times
\exp
(\pi i)
\left[
\frac{4a}{5}
(b+d-bd)
-
\frac{8}{25}
ab
-
\frac{8}{25}
cd
-\frac45
\right]  \\
=&
\nu_{\eta}^6(M)
\cdot
(c\tau+d) \cdot
\theta 
\left[
\begin{array}{c}
\frac15 \vspace{1mm} \\
\frac15
\end{array}
\right] \left(0, \tau  \right) 
\theta 
\left[
\begin{array}{c}
\frac15 \vspace{1mm} \\
\frac95
\end{array}
\right] \left(0, \tau  \right)
\cdot
\exp(\pi i) 
\left[
E
\right],
\end{align*}
where 
\begin{align*}
E=&
\frac15 \cdot \frac12
\left(
\frac{b}{5}
+\frac{d-1}{5}
-bd
\right) 
+
\frac15 \cdot \frac12
\left(
\frac{b}{5}
+\frac{9(d-1)}{5}
-bd
\right)
+
\frac{4a}{5}
(b+d-bd)
-
\frac{8}{25}
ab
-
\frac{8}{25}
cd
-\frac45   \\
=&
\frac{1}{25}
(
b+5d-5-5bd
)
+
\frac{4a}{5}
(b+d-bd)
-
\frac{8}{25}ab
-
\frac{8}{25}
cd
-
\frac45  \\
=&
\frac{2}{25}
\left\{
- 
\frac52 (b-1)(d-1)-2b
+10(a-1)
-
10a(b-1)(d-1)
-4ab
-4cd
\right\} \\
\equiv&
\frac{2}{25}
(
-2b-4ab-4cd
)
\bmod{2}   \\
=&
\frac15
\left(
-\frac{4b}{5}
-
\frac{8ab}{5}
-
\frac{8cd}{5}
\right), 
\end{align*}
which proves the theorem. 
\end{proof}

\subsection{The case where 
$
M
\equiv
\begin{pmatrix}
-1 & 0 \\
0 & -1
\end{pmatrix} 
\bmod{5}. 
$ 
}

\begin{theorem}
\label{thm:(1/5,1/5),(1/5,9/5)-minusI}
{\it
Set 
$
M
=
\begin{pmatrix}
a & b \\
c & d
\end{pmatrix}
\in
\Gamma_{\theta,5}
$ 
with 
$
M
\equiv
\begin{pmatrix}
-1 & 0 \\
0 & -1
\end{pmatrix} 
\bmod{5}. 
$ 
Then the multiplier system of 
$
\theta 
\left[
\begin{array}{c}
\frac15 \vspace{1mm} \\
\frac15
\end{array}
\right] \left(0, \tau  \right) 
\theta 
\left[
\begin{array}{c}
\frac15 \vspace{1mm} \\
\frac95
\end{array}
\right] \left(0, \tau  \right) 
$ 
is given by 
\begin{equation*}
\nu(M)
=
\nu_{\eta}^6(M)
\exp
\left[
\frac{\pi i}{5}
\left(
-
\frac{6b}{5}
-
\frac{8ab}{5}
-
\frac{8cd}{5}
\right)
\right]. 
\end{equation*}
}
\end{theorem}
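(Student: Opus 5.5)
The plan is to follow the proof of Theorem \ref{thm:(1/5,1/5),(1/5,9/5)-I} closely. The one new point is that the transformed characteristics now reduce to the \emph{negatives} of the original ones modulo $2$, so I will also need equation (\ref{eqn:character-minus}).

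\textbf{Step 1 (apply the general formula).} Apply Theorem \ref{thm:transformation-general} with $v=0$ to each factor, using $(\epsilon,\epsilon^{\prime})=(\frac15,\frac15)$ and $(\frac15,\frac95)$. Multiplying the two identities produces $\nu_\eta^6(M)(c\tau+d)$ times
\[
\theta\left[\begin{array}{c} a/5+c/5-ac\\ b/5+d/5-bd\end{array}\right]
\theta\left[\begin{array}{c} a/5+9c/5-ac\\ b/5+9d/5-bd\end{array}\right],
\]
times an explicit exponential. In that exponential, $\frac{1-\epsilon}{2}=\frac25$, while $\frac{1-\epsilon^{\prime}}{2}$ equals $\frac25$ for the first factor and $-\frac25$ for the second. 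As in the previous theorem, the two exponents should combine to something like
\[
\frac{4a}{5}(b+d-bd)-\frac{8}{25}ab-\frac{8}{25}cd-\frac45 \pmod 2 ,
\]
and I will recompute this carefully from the general formula.

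\textbf{Step 2 (reduce the characteristics).} Now use $a\equiv d\equiv -1$ and $b\equiv c\equiv 0 \bmod 5$.

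For the upper entries, $\frac{a+1}{5}$ and $\frac c5$ are integers with the same parities as $a+1$ and $c$. Hence
\[
\frac a5+\frac c5-ac+\frac15 \equiv a+1+c+ac=(a+1)(c+1)\equiv 0 \pmod 2,
\]
since $a$ and $c$ are not both even. In the same way, $\frac a5+\frac{9c}{5}-ac+\frac15$ is even.

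For the lower entries, set
\[
2n_1=\frac b5+\frac{d+1}{5}-bd, \qquad 2n_2=\frac b5+\frac{9(d+1)}{5}-bd .
\]
By the same parity argument with $(b,d)$ in place of $(a,c)$, both right-hand sides are even, so $n_1,n_2\in\mathbb{Z}$.

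Equation (\ref{eqn:char-even}) with $\epsilon=-\frac15$ then gives the factor $\exp(-\pi i n_j/5)$ for each theta. Equation (\ref{eqn:character-minus}) at $v=0$ identifies $\theta\left[\begin{array}{c}-1/5\\ -1/5\end{array}\right]$ with $\theta\left[\begin{array}{c}1/5\\ 1/5\end{array}\right]$, and $\theta\left[\begin{array}{c}-1/5\\ -9/5\end{array}\right]$ with $\theta\left[\begin{array}{c}1/5\\ 9/5\end{array}\right]$. So the original product reappears, multiplied by $\exp(\pi i E)$ with
\[
E=-\frac{n_1+n_2}{5}+(\text{exponent of Step 1}).
\]

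\textbf{Step 3 (simplify $E$ modulo $2$).} Expand $-\frac{n_1+n_2}{5}=-\frac{1}{25}\bigl(b+5(d+1)-5bd\bigr)$. Write $a+1$, $d+1$, $b$, $c$ as multiples of $5$, and use the parity facts $(a+1)(c+1)\equiv(b+1)(d+1)\equiv 0 \bmod 2$ to discard every term that is an even integer. The target is $E\equiv\frac{2}{25}(-3b-4ab-4cd)\bmod 2$, which equals $\frac15\bigl(-\frac{6b}{5}-\frac{8ab}{5}-\frac{8cd}{5}\bigr)$.

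Compared with Theorem \ref{thm:(1/5,1/5),(1/5,9/5)-I}, the $b$-coefficient changes from $-2$ to $-3$. This change should come from the sign flip in the $n_j$ together with the constant $-\frac45$ combining with $\frac{4a}{5}(\cdots)$ now that $a\equiv-1$.

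The main obstacle is this last modular bookkeeping. Each term must be reduced modulo $2$ rather than modulo $1$, so half-integers such as $\frac12(b+1)(d+1)$ must be tracked exactly. I would first check the final congruence on the concrete matrices $-I$ and $\begin{pmatrix}4&5\\5&\ast\end{pmatrix}$ before writing the general reduction.
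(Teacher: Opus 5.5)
Your plan is correct and is essentially the paper's proof. You apply Theorem \ref{thm:transformation-general} to both factors, use the evenness of $\frac{a+1}{5}+\frac c5-ac$, $\frac b5+\frac{d+1}{5}-bd$, etc.\ together with (\ref{eqn:char-even}) and (\ref{eqn:character-minus}) to produce the factors $\exp(-\pi i n_j/5)$, and then reduce $E$ modulo $2$ using $(b+1)(d+1)\equiv 0 \bmod 2$.

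Two small corrections. First, the shift of the $b$-coefficient from $-2$ to $-3$ comes entirely from $-\frac{n_1+n_2}{5}\equiv-\frac{6b}{25} \bmod 2$; here $\frac{4a}{5}(b+d-bd)-\frac45$ is itself an even integer and contributes nothing to it. Second, your test matrix $\begin{pmatrix}4&5\\5&\ast\end{pmatrix}$ cannot have determinant $1$; use e.g.\ $\begin{pmatrix}4&5\\15&19\end{pmatrix}$ instead.
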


\begin{proof}
Since $ad-bc=1,$ 
it follows that 
\begin{equation*}
(a,c)\equiv (1,1),(1,0),(0,1) \bmod{2} \, \mathrm{and} \,
(b,d)\equiv (1,1),(1,0),(0,1) \bmod{2},
\end{equation*}
which implies that 
\begin{align*}
&
\frac{a}{5}+\frac{c}{5}-ac-\left(-\frac15\right) =
\frac{a+1}{5}+\frac{c}{5}-ac \equiv 0 \bmod{2}, 
& 
&
\frac{b}{5}+\frac{d}{5}-bd-\left(-\frac15 \right) =
\frac{b}{5}+\frac{d+1}{5}-bd \equiv 0 \bmod{2},  \\
&
\frac{a}{5}+\frac{9c}{5}-ac-\left(-\frac15\right) =
\frac{a+1}{5}+\frac{9c}{5}-ac \equiv 0 \bmod{2}, 
& 
&
\frac{b}{5}+\frac{9d}{5}-bd-\left(-\frac95\right) =
\frac{b}{5}+\frac{9(d+1)}{5}-bd \\
& 
& 
&\hspace{70mm}
\equiv 0 \bmod{2}. 
\end{align*} 
By equations (\ref{eqn:char-even}), (\ref{eqn:character-minus}) and 
Theorem \ref{thm:transformation-general}, 
we find that 
\begin{align*}
&
\theta 
\left[
\begin{array}{c}
\frac15 \vspace{1mm} \\
\frac15
\end{array}
\right] \left(0, M\tau  \right) 
\theta 
\left[
\begin{array}{c}
\frac15 \vspace{1mm} \\
\frac95
\end{array}
\right] \left(0, M\tau  \right) \\
=&
\nu_{\eta}^6(M)
\cdot
(c\tau+d) \cdot
\theta 
\left[
\begin{array}{c}
a/5+c/5-ac \vspace{1mm} \\
b/5+d/5-bd
\end{array}
\right] \left(0, \tau  \right) 
\theta 
\left[
\begin{array}{c}
a/5+9c/5-ac \vspace{1mm} \\
b/5+9d/5-bd
\end{array}
\right] \left(0, \tau  \right) \\
&
\times
\exp
(\pi i)
\left[
\frac{4a}{5}
(b+d-bd)
-
\frac{8}{25}
ab
-
\frac{8}{25}
cd
-\frac45
\right]  \\
=&
\nu_{\eta}^6(M)
\cdot
(c\tau+d) \cdot
\theta 
\left[
\begin{array}{c}
\frac15 \vspace{1mm} \\
\frac15
\end{array}
\right] \left(0, \tau  \right) 
\theta 
\left[
\begin{array}{c}
\frac15 \vspace{1mm} \\
\frac95
\end{array}
\right] \left(0, \tau  \right) \\
&
\times
\exp(\pi i)
\left[
-
\frac15 \cdot \frac12
\left(
\frac{b}{5}
+\frac{d+1}{5}
-bd
\right)
\right] 
\cdot
\exp(\pi i)
\left[
-
\frac15 \cdot \frac12
\left(
\frac{b}{5}
+\frac{9(d+1)}{5}
-bd
\right)
\right]   \\
&
\times
\exp
(\pi i)
\left[
\frac{4a}{5}
(b+d-bd)
-
\frac{8}{25}
ab
-
\frac{8}{25}
cd
-\frac45
\right]  \\
=&
\nu_{\eta}^6(M)
\cdot
(c\tau+d) \cdot
\theta 
\left[
\begin{array}{c}
\frac15 \vspace{1mm} \\
\frac15
\end{array}
\right] \left(0, \tau  \right) 
\theta 
\left[
\begin{array}{c}
\frac15 \vspace{1mm} \\
\frac95
\end{array}
\right] \left(0, \tau  \right)
\cdot
\exp(\pi i) 
\left[
E
\right],
\end{align*}
where 
\begin{align*}
E=&
-
\frac15 \cdot \frac12
\left(
\frac{b}{5}
+\frac{d+1}{5}
-bd
\right) 
-
\frac15 \cdot \frac12
\left(
\frac{b}{5}
+\frac{9(d+1)}{5}
-bd
\right)
+
\frac{4a}{5}
(b+d-bd)
-
\frac{8}{25}
ab
-
\frac{8}{25}
cd
-\frac45   \\
=&
\frac{1}{25}
(
-b-5d-5+5bd
)
+
\frac{4a}{5}
(b+d-bd)
-
\frac{8}{25}ab
-
\frac{8}{25}
cd
-
\frac45  \\
=&
\frac{2}{25}
\left\{
\frac52 (b-1)(d+1)-3b
-10(a+1)
-
10a(b-1)(d+1)
+20ab
-4ab
-4cd
\right\} \\
\equiv&
\frac{2}{25}
(
-3b-4ab-4cd
)
\bmod{2}   \\
=&
\frac15
\left(
-\frac{6b}{5}
-
\frac{8ab}{5}
-
\frac{8cd}{5}
\right), 
\end{align*}
which proves the theorem. 
\end{proof}

\subsection{The case where 
$
M
\equiv
\begin{pmatrix}
0 & -1 \\
1 & 0
\end{pmatrix} 
\bmod{5}. 
$ 
}

\begin{theorem}
\label{thm:(1/5,1/5),(1/5,9/5)-T}
{\it
Set 
$
M
=
\begin{pmatrix}
a & b \\
c & d
\end{pmatrix}
\in
\Gamma_{\theta,5}
$ 
with 
$
M
\equiv
\begin{pmatrix}
0 & -1 \\
1 & 0
\end{pmatrix} 
\bmod{5}. 
$ 
Then the multiplier system of 
$
\theta 
\left[
\begin{array}{c}
\frac15 \vspace{1mm} \\
\frac15
\end{array}
\right] \left(0, \tau  \right) 
\theta 
\left[
\begin{array}{c}
\frac15 \vspace{1mm} \\
\frac95
\end{array}
\right] \left(0, \tau  \right) 
$ 
is given by 
\begin{equation*}
\nu(M)
=
\nu_{\eta}^6(M)
\exp
\left[
\frac{\pi i}{5}
\left(
-
5
-
\frac{4d}{5}
-
\frac{8ab}{5}
-
\frac{8cd}{5}
\right)
\right]. 
\end{equation*}
}
\end{theorem}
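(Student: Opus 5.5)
The plan is to imitate the proofs of Theorems \ref{thm:(1/5,1/5),(1/5,9/5)-I} and \ref{thm:(1/5,1/5),(1/5,9/5)-minusI}. I would apply Theorem \ref{thm:transformation-general} at $v=0$ twice, once with $(\epsilon,\epsilon^{\prime})=(\frac15,\frac15)$ and once with $(\epsilon,\epsilon^{\prime})=(\frac15,\frac95)$, and multiply the results. This gives
\[
\nu_{\eta}^6(M)\,(c\tau+d)\,
\theta\left[\begin{array}{c} a/5+c/5-ac\\ b/5+d/5-bd\end{array}\right]
\theta\left[\begin{array}{c} a/5+9c/5-ac\\ b/5+9d/5-bd\end{array}\right]
\exp(\pi i)[E_0].
\]
Here $E_0$ is the sum of the two explicit phase brackets from Theorem \ref{thm:transformation-general}, with $\frac{1-\epsilon}{2}=\frac25$ and $\frac{1-\epsilon^{\prime}}{2}=\frac25$ or $-\frac25$. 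As in Theorem \ref{thm:(1/5,1/5),(1/5,9/5)-I}, I expect $E_0$ to simplify to $\frac{4a}{5}(b+d-bd)-\frac{8}{25}ab-\frac{8}{25}cd-\frac45$.

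The new feature is that, since $a\equiv d\equiv 0$, $b\equiv -1$ and $c\equiv 1 \bmod 5$, the two factors are exchanged rather than fixed. The first characteristic has fractional parts $(\frac15,-\frac15)$, which I would compare with $[\frac15;\frac95]$. I would write
\[
\frac{a}{5}+\frac{c}{5}-ac-\frac15=\frac{a}{5}+\frac{c-1}{5}-ac, \qquad
\frac{b}{5}+\frac{d}{5}-bd-\frac95=\frac{b+1}{5}+\frac{d}{5}-bd-2 .
\]
For the second characteristic, the top entry is congruent to $\frac95\equiv -\frac15 \bmod 2$ and the bottom to $-\frac15$. So it is an even shift of $[-\frac15;-\frac15]$, which by (\ref{eqn:character-minus}) at $v=0$ equals $\theta[\frac15;\frac15]$. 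The differences are
\[
\frac{a}{5}+\frac{9c}{5}-ac+\frac15=\frac{a}{5}+\frac{9c+1}{5}-ac, \qquad
\frac{b}{5}+\frac{9d}{5}-bd+\frac15=\frac{b+1}{5}+\frac{9d}{5}-bd .
\]
Each of these is an integer by the congruences mod $5$. Each is also even, by the same parity argument on $(a,c)$ and $(b,d)$ used in the earlier proofs: the cases $(1,1),(1,0),(0,1) \bmod 2$. Hence (\ref{eqn:char-even}) applies.

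Next I would collect the factors $\exp(\pi i\,\epsilon n)$ that (\ref{eqn:char-even}) contributes, where $2n$ is the bottom shift. For the first factor this is $\exp(\pi i\cdot\frac15\cdot\frac12(\frac{b+1}{5}+\frac{d}{5}-bd-2))$. For the second, taken with base characteristic $(-\frac15,-\frac15)$, it is $\exp(-\pi i\cdot\frac15\cdot\frac12(\frac{b+1}{5}+\frac{9d}{5}-bd))$. The product of the two theta constants is then restored to $\theta[\frac15;\frac15]\theta[\frac15;\frac95]$, and the total phase $E$ is the sum of these two contributions and $E_0$.

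The main obstacle is the final reduction of $E$ modulo $2$. I would write $E=\frac{2}{25}\{\cdots\}$. Then I would show that every term in the braces other than $-2d-4ab-4cd-\frac{25}{2}$ is a multiple of $25$ with even cofactor. For this I would use $a\equiv d\equiv 0$ and $b\equiv -1 \bmod 5$, rewriting products such as $a(b+d-bd)$ in terms of $a(b+1)$, $d$, and so on, together with the parity facts $(1-b)(1-d)\equiv 0 \bmod 2$. The constant $-\frac{25}{2}$ yields the $-5$ inside $\frac{\pi i}{5}(\cdots)$, and the sign differences produced by the $-2$ shift and by the minus-characteristic produce $-\frac{4d}{5}$ in place of $-\frac{4b}{5}$.

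If the constant does not come out to exactly $-5$ modulo $10$, I would recheck the sign conventions in (\ref{eqn:character-minus}) and the choice of even shift. I would also test the formula on the concrete matrix $M=\begin{pmatrix}0&-1\\1&0\end{pmatrix}$ using $\nu_{\eta}(S)$ from (\ref{eqn-eta-multiplier}).
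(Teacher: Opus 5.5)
Your proposal is correct and follows the paper's proof essentially step by step: the same comparison characteristics $[\frac15;\frac95]$ and $[-\frac15;-\frac15]$ (the latter via (\ref{eqn:character-minus})), the same even-shift phases from (\ref{eqn:char-even}), and the same $E_0$. The final reduction you flag as the main obstacle is in fact immediate: the two shift phases sum to $-\frac{4d}{25}-\frac15$, so with your $E_0$ one gets exactly $E=-1-\frac{4d}{25}-\frac{8ab}{25}-\frac{8cd}{25}+4\cdot\frac{a}{5}(b+d-bd)$, and the last term is a multiple of $4$ because $5\mid a$, so no further rewriting is needed.
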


\begin{proof}
Since $ad-bc=1,$ 
it follows that 
\begin{equation*}
(a,c)\equiv (1,1),(1,0),(0,1) \bmod{2} \, \mathrm{and} \,
(b,d)\equiv (1,1),(1,0),(0,1) \bmod{2},
\end{equation*}
which implies that 
\begin{align*}
&
\frac{a}{5}+\frac{c}{5}-ac-\frac15 =
\frac{a}{5}+\frac{c-1}{5}-ac \equiv 0 \bmod{2}, 
& 
&
\frac{b}{5}+\frac{d}{5}-bd-\frac95 =
\frac{b-9}{5}+\frac{d}{5}-bd \equiv 0 \bmod{2},  \\
&
\frac{a}{5}+\frac{9c}{5}-ac-\left(-\frac15 \right) =
\frac{a}{5}+\frac{9c+1}{5}-ac \equiv 0 \bmod{2}, 
& 
&
\frac{b}{5}+\frac{9d}{5}-bd-\left(-\frac15 \right)=
\frac{b+1}{5}+\frac{9d}{5}-bd  \\
&
&
&\hspace{60mm}
\equiv 0 \bmod{2}. 
\end{align*} 
By equations (\ref{eqn:char-even}), (\ref{eqn:character-minus}) and 
Theorem \ref{thm:transformation-general}, 
we find that 
\begin{align*}
&
\theta 
\left[
\begin{array}{c}
\frac15 \vspace{1mm} \\
\frac15
\end{array}
\right] \left(0, M\tau  \right) 
\theta 
\left[
\begin{array}{c}
\frac15 \vspace{1mm} \\
\frac95
\end{array}
\right] \left(0, M\tau  \right) \\
=&
\nu_{\eta}^6(M)
\cdot
(c\tau+d) \cdot
\theta 
\left[
\begin{array}{c}
a/5+c/5-ac \vspace{1mm} \\
b/5+d/5-bd
\end{array}
\right] \left(0, \tau  \right) 
\theta 
\left[
\begin{array}{c}
a/5+9c/5-ac \vspace{1mm} \\
b/5+9d/5-bd
\end{array}
\right] \left(0, \tau  \right) \\
&
\times
\exp
(\pi i)
\left[
\frac{4a}{5}
(b+d-bd)
-
\frac{8}{25}
ab
-
\frac{8}{25}
cd
-\frac45
\right]  \\
=&
\nu_{\eta}^6(M)
\cdot
(c\tau+d) \cdot
\theta 
\left[
\begin{array}{c}
\frac15 \vspace{1mm} \\
\frac15
\end{array}
\right] \left(0, \tau  \right) 
\theta 
\left[
\begin{array}{c}
\frac15 \vspace{1mm} \\
\frac95
\end{array}
\right] \left(0, \tau  \right) \\
&
\times
\exp(\pi i)
\left[
\frac15 \cdot \frac12
\left(
\frac{b-9}{5}
+\frac{d}{5}
-bd
\right)
\right] 
\cdot
\exp(\pi i)
\left[
-
\frac15 \cdot \frac12
\left(
\frac{b+1}{5}
+\frac{9d}{5}
-bd
\right)
\right]   \\
&
\times
\exp
(\pi i)
\left[
\frac{4a}{5}
(b+d-bd)
-
\frac{8}{25}
ab
-
\frac{8}{25}
cd
-\frac45
\right]  \\
=&
\nu_{\eta}^6(M)
\cdot
(c\tau+d) \cdot
\theta 
\left[
\begin{array}{c}
\frac15 \vspace{1mm} \\
\frac15
\end{array}
\right] \left(0, \tau  \right) 
\theta 
\left[
\begin{array}{c}
\frac15 \vspace{1mm} \\
\frac95
\end{array}
\right] \left(0, \tau  \right)
\cdot
\exp(\pi i) 
\left[
E
\right],
\end{align*}
where 
\begin{align*}
E=&
\frac15 \cdot \frac12
\left(
\frac{b-9}{5}
+\frac{d}{5}
-bd
\right) 
-
\frac15 \cdot \frac12
\left(
\frac{b+1}{5}
+\frac{9d}{5}
-bd
\right)
+
\frac{4a}{5}
(b+d-bd)
-
\frac{8}{25}
ab
-
\frac{8}{25}
cd
-\frac45   \\
\equiv&
-
1
-
\frac{4d}{25}
-
\frac{8ab}{25}
-
\frac{8cd}{25} \bmod{2}  \\
=&
\frac15
\left[
-5
-
\frac{4d}{5}
-
\frac{8ab}{5}
-
\frac{8cd}{5}
\right], 
\end{align*}
which proves the theorem. 
\end{proof}

\subsection{The case where 
$
M
\equiv
\begin{pmatrix}
0 & 1 \\
-1 & 0
\end{pmatrix} 
\bmod{5}. 
$ 
}

\begin{theorem}
\label{thm:(1/5,1/5),(1/5,9/5)-minusT}
{\it
Set 
$
M
=
\begin{pmatrix}
a & b \\
c & d
\end{pmatrix}
\in
\Gamma_{\theta,5}
$ 
with 
$
M
\equiv
\begin{pmatrix}
0 & 1 \\
-1 & 0
\end{pmatrix} 
\bmod{5}. 
$ 
Then the multiplier system of 
$
\theta 
\left[
\begin{array}{c}
\frac15 \vspace{1mm} \\
\frac15
\end{array}
\right] \left(0, \tau  \right) 
\theta 
\left[
\begin{array}{c}
\frac15 \vspace{1mm} \\
\frac95
\end{array}
\right] \left(0, \tau  \right) 
$ 
is given by 
\begin{equation*}
\nu(M)
=
\nu_{\eta}^6(M)
\exp
\left[
\frac{\pi i}{5}
\left(
-
5
+
\frac{4d}{5}
-
\frac{8ab}{5}
-
\frac{8cd}{5}
\right)
\right]. 
\end{equation*}
}
\end{theorem}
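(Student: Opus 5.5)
The plan is to repeat the argument of Theorem \ref{thm:(1/5,1/5),(1/5,9/5)-T} with the residues adjusted to $a\equiv 0$, $b\equiv 1$, $c\equiv -1$, $d\equiv 0 \bmod 5$.

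First I apply Theorem \ref{thm:transformation-general} at $v=0$ to each of the two theta constants and multiply the results. This gives $\nu_\eta^6(M)(c\tau+d)$ times
$$
\theta\left[\begin{array}{c} a/5+c/5-ac \\ b/5+d/5-bd\end{array}\right]
\theta\left[\begin{array}{c} a/5+9c/5-ac \\ b/5+9d/5-bd\end{array}\right],
$$
times the same exponential factor $\exp(\pi i)\left[\frac{4a}{5}(b+d-bd)-\frac{8}{25}ab-\frac{8}{25}cd-\frac45\right]$ that appears in the three preceding theorems. That factor depends only on the characteristics $(\frac15,\frac15)$ and $(\frac15,\frac95)$, not on the residue class of $M$.

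Next I identify the new characteristics modulo $2$. Since $ad-bc=1$ forces $(1-a)(1-c)\equiv(1-b)(1-d)\equiv 0 \bmod 2$, I expect the following congruences, each a multiple of $\frac15$ that is divisible by $5$ in the numerator and even:
\begin{align*}
\frac{a+c+1}{5}-ac &\equiv 0 \bmod 2, &
\frac{b-1}{5}+\frac{d}{5}-bd &\equiv 0 \bmod 2,\\
\frac{a+9c-1}{5}-ac &\equiv 0 \bmod 2, &
\frac{b-1}{5}+\frac{9d}{5}-bd &\equiv 0 \bmod 2.
\end{align*}
Divisibility by $5$ follows from $a+c\equiv -1$, $b\equiv 1$, $a+9c\equiv 1$ and $b+9d\equiv 1 \bmod 5$. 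Evenness follows from $a+c+1-ac=2-(1-a)(1-c)$, and similarly for the others.

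These congruences say two things. The second theta constant's characteristic is congruent to $\left[\begin{smallmatrix}1/5\\1/5\end{smallmatrix}\right]$. The first one's is congruent to $\left[\begin{smallmatrix}-1/5\\1/5\end{smallmatrix}\right]$, which equals $-\left[\begin{smallmatrix}1/5\\9/5\end{smallmatrix}\right]$ plus $\left[\begin{smallmatrix}0\\2\end{smallmatrix}\right]$. So the roles of the two factors are swapped. I then use (\ref{eqn:char-even}) to shift the characteristics back, picking up factors of the form $\exp(\pi i)\left[\pm\frac15\cdot\frac12(\cdots)\right]$, where $(\cdots)$ is the corresponding lower-characteristic difference. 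The first factor also contributes an extra $\exp(\pi i\cdot\frac{-1}{5}\cdot 1)$ from the $\left[\begin{smallmatrix}0\\2\end{smallmatrix}\right]$ shift. After that, (\ref{eqn:character-minus}) at $v=0$ removes the overall sign of the characteristic at no cost. This recovers the product $\theta\left[\begin{smallmatrix}1/5\\1/5\end{smallmatrix}\right]\theta\left[\begin{smallmatrix}1/5\\9/5\end{smallmatrix}\right](0,\tau)$.

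Finally I collect all the phases into a single exponent $E$ and reduce it modulo $2$. I write the combination as $\frac{2}{25}\{\cdots\}$ and discard terms that are integers times $10$, such as $10a(\ldots)$ and $\frac52(b-1)(\ldots)$ with the appropriate even factor. The target is $E\equiv -1+\frac{4d}{25}-\frac{8ab}{25}-\frac{8cd}{25}$. This matches the $T$ case except for the sign of the $d$-term, which comes from the swapped sign of the lower characteristic shifts.

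I expect this last bookkeeping to be the main obstacle. The constant $-1$ and the $+\frac{4d}{25}$ term must come out with correct signs and parities. I will check the result numerically on a concrete element, such as $M=\begin{pmatrix}0&1\\-1&0\end{pmatrix}$ itself and one with $c$ even, to confirm the constant.
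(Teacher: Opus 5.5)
Your proposal is correct and is essentially the paper's proof. The only cosmetic difference is in the first factor. You reduce its characteristic to $\left[\begin{smallmatrix}-1/5\\ 1/5\end{smallmatrix}\right]$ and then apply an extra $\left[\begin{smallmatrix}0\\ 2\end{smallmatrix}\right]$ shift. The paper instead reduces directly to $\left[\begin{smallmatrix}-1/5\\ -9/5\end{smallmatrix}\right]$ via $\frac{b+9}{5}+\frac{d}{5}-bd\equiv 0 \bmod 2$. Both routes give the same phase $\exp(\pi i)\left[-\frac{1}{10}\left(\frac{b+9}{5}+\frac{d}{5}-bd\right)\right]$.

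The final bookkeeping is easier than you expect, and no regrouping as $\frac{2}{25}\{\cdots\}$ is needed. The $b$ and $bd$ terms of the two shift phases cancel, leaving $-\frac15+\frac{4d}{25}$. Adding $-\frac45$ gives the constant $-1$. Finally, $\frac{4a}{5}(b+d-bd)$ is an even integer because $5\mid a$, so it drops out modulo $2$.
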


\begin{proof}
Since $ad-bc=1,$ 
it follows that 
\begin{equation*}
(a,c)\equiv (1,1),(1,0),(0,1) \bmod{2} \, \mathrm{and} \,
(b,d)\equiv (1,1),(1,0),(0,1) \bmod{2},
\end{equation*}
which implies that 
\begin{align*}
&
\frac{a}{5}+\frac{c}{5}-ac-\left(-\frac15 \right) =
\frac{a}{5}+\frac{c+1}{5}-ac \equiv 0 \bmod{2}, 
& 
&
\frac{b}{5}+\frac{d}{5}-bd-\left(-\frac95\right) =
\frac{b+9}{5}+\frac{d}{5}-bd \equiv 0 \bmod{2},  \\
&
\frac{a}{5}+\frac{9c}{5}-ac-\frac15  =
\frac{a}{5}+\frac{9c-1}{5}-ac \equiv 0 \bmod{2}, 
& 
&
\frac{b}{5}+\frac{9d}{5}-bd-\frac15 =
\frac{b-1}{5}+\frac{9d}{5}-bd  
\equiv 0 \bmod{2}. 
\end{align*} 
By equations (\ref{eqn:char-even}), (\ref{eqn:character-minus}) and 
Theorem \ref{thm:transformation-general}, 
we find that 
\begin{align*}
&
\theta 
\left[
\begin{array}{c}
\frac15 \vspace{1mm} \\
\frac15
\end{array}
\right] \left(0, M\tau  \right) 
\theta 
\left[
\begin{array}{c}
\frac15 \vspace{1mm} \\
\frac95
\end{array}
\right] \left(0, M\tau  \right) \\
=&
\nu_{\eta}^6(M)
\cdot
(c\tau+d) \cdot
\theta 
\left[
\begin{array}{c}
a/5+c/5-ac \vspace{1mm} \\
b/5+d/5-bd
\end{array}
\right] \left(0, \tau  \right) 
\theta 
\left[
\begin{array}{c}
a/5+9c/5-ac \vspace{1mm} \\
b/5+9d/5-bd
\end{array}
\right] \left(0, \tau  \right) \\
&
\times
\exp
(\pi i)
\left[
\frac{4a}{5}
(b+d-bd)
-
\frac{8}{25}
ab
-
\frac{8}{25}
cd
-\frac45
\right]  \\
=&
\nu_{\eta}^6(M)
\cdot
(c\tau+d) \cdot
\theta 
\left[
\begin{array}{c}
\frac15 \vspace{1mm} \\
\frac15
\end{array}
\right] \left(0, \tau  \right) 
\theta 
\left[
\begin{array}{c}
\frac15 \vspace{1mm} \\
\frac95
\end{array}
\right] \left(0, \tau  \right) \\
&
\times
\exp(\pi i)
\left[
-
\frac15 \cdot \frac12
\left(
\frac{b+9}{5}
+\frac{d}{5}
-bd
\right)
\right] 
\cdot
\exp(\pi i)
\left[
\frac15 \cdot \frac12
\left(
\frac{b-1}{5}
+\frac{9d}{5}
-bd
\right)
\right]   \\
&
\times
\exp
(\pi i)
\left[
\frac{4a}{5}
(b+d-bd)
-
\frac{8}{25}
ab
-
\frac{8}{25}
cd
-\frac45
\right]  \\
=&
\nu_{\eta}^6(M)
\cdot
(c\tau+d) \cdot
\theta 
\left[
\begin{array}{c}
\frac15 \vspace{1mm} \\
\frac15
\end{array}
\right] \left(0, \tau  \right) 
\theta 
\left[
\begin{array}{c}
\frac15 \vspace{1mm} \\
\frac95
\end{array}
\right] \left(0, \tau  \right)
\cdot
\exp(\pi i) 
\left[
E
\right],
\end{align*}
where 
\begin{align*}
E=&
-
\frac15 \cdot \frac12
\left(
\frac{b+9}{5}
+\frac{d}{5}
-bd
\right) 
+
\frac15 \cdot \frac12
\left(
\frac{b-1}{5}
+\frac{9d}{5}
-bd
\right)
+
\frac{4a}{5}
(b+d-bd)
-
\frac{8}{25}
ab
-
\frac{8}{25}
cd
-\frac45   \\
\equiv&
-
1
+
\frac{4d}{25}
-
\frac{8ab}{25}
-
\frac{8cd}{25} \bmod{2}  \\
=&
\frac15
\left[
-5
+
\frac{4d}{5}
-
\frac{8ab}{5}
-
\frac{8cd}{5}
\right], 
\end{align*}
which proves the theorem. 
\end{proof}

\subsection{Summary}

\begin{theorem}
\label{thm:(1/5,1/5),(1/5,9/5)-summary}
{\it
For each 
$
M
=
\begin{pmatrix}
a & b \\
c & d
\end{pmatrix}
\in
\Gamma_{\theta,5}, 
$ 
the multiplier system of 
$$
\theta 
\left[
\begin{array}{c}
\frac15 \vspace{1mm} \\
\frac15
\end{array}
\right] \left(0, \tau  \right) 
\theta 
\left[
\begin{array}{c}
\frac15 \vspace{1mm} \\
\frac95
\end{array}
\right] \left(0, \tau  \right) 
$$ 
is given by 
\begin{equation*}
\nu(M)
=
\nu_{\eta}^6(M)\cdot
\exp\left( \frac{\pi i}{5} E \right), 
\end{equation*}
where $E$ is defined by 
\begin{equation*}
E=
\begin{cases}
\displaystyle
-\frac{4b}{5}-\frac{8ab}{5}-\frac{8cd}{5} 
&
\text{
if 
$
M
\equiv 
\begin{pmatrix}
1 & 0 \\
0 & 1
\end{pmatrix}
\bmod{5}
$
} \\ 
\displaystyle
-\frac{6b}{5}-\frac{8ab}{5}-\frac{8cd}{5} 
&
\text{
if 
$
M
\equiv 
\begin{pmatrix}
-1 & 0 \\
0 & -1
\end{pmatrix}
\bmod{5}
$
} \\ 
\displaystyle
-5
-\frac{4d}{5}-\frac{8ab}{5}-\frac{8cd}{5} 
&
\text{
if 
$
M
\equiv 
\begin{pmatrix}
0 & -1 \\
1 & 0
\end{pmatrix}
\bmod{5}
$
} \\ 
\displaystyle
-5
+\frac{4d}{5}-\frac{8ab}{5}-\frac{8cd}{5} 
&
\text{
if 
$
M
\equiv 
\begin{pmatrix}
0 & 1 \\
-1 & 0
\end{pmatrix}
\bmod{5}. 
$
} 
\end{cases}
\end{equation*}
}
\end{theorem}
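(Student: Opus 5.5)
This summary theorem is a collation of the four preceding theorems, so the plan is to reduce it to them.

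The first step is to use the definition of $\Gamma_{\theta,5}$. Every $M\in\Gamma_{\theta,5}$ is congruent modulo $5$ to exactly one of
\[
\begin{pmatrix} 1&0\\0&1\end{pmatrix},\quad \begin{pmatrix} -1&0\\0&-1\end{pmatrix},\quad \begin{pmatrix} 0&-1\\1&0\end{pmatrix},\quad \begin{pmatrix} 0&1\\-1&0\end{pmatrix}.
\]
These four residue classes are pairwise distinct modulo $5$, since $1\not\equiv -1 \bmod 5$ and the diagonal and antidiagonal patterns cannot coincide. Hence the case distinction in the definition of $E$ is exhaustive and unambiguous.

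The second step handles each case by citing the corresponding earlier result:
\begin{itemize}
\item Theorem \ref{thm:(1/5,1/5),(1/5,9/5)-I} gives $E=-\frac{4b}{5}-\frac{8ab}{5}-\frac{8cd}{5}$ for $M\equiv I$.
\item Theorem \ref{thm:(1/5,1/5),(1/5,9/5)-minusI} gives the $-\frac{6b}{5}$ version for $M\equiv -I$.
\item Theorem \ref{thm:(1/5,1/5),(1/5,9/5)-T} gives $-5-\frac{4d}{5}-\cdots$ for $M\equiv\begin{pmatrix}0&-1\\1&0\end{pmatrix}$.
\item Theorem \ref{thm:(1/5,1/5),(1/5,9/5)-minusT} gives $-5+\frac{4d}{5}-\cdots$ for $M\equiv\begin{pmatrix}0&1\\-1&0\end{pmatrix}$.
\end{itemize}
In each case the earlier theorem already shows that
\[
\theta\!\left[\begin{array}{c}\frac15\\ \frac15\end{array}\right]\!(0,M\tau)\,\theta\!\left[\begin{array}{c}\frac15\\ \frac95\end{array}\right]\!(0,M\tau)=\nu(M)(c\tau+d)\,\theta\!\left[\begin{array}{c}\frac15\\ \frac15\end{array}\right]\!(0,\tau)\,\theta\!\left[\begin{array}{c}\frac15\\ \frac95\end{array}\right]\!(0,\tau),
\]
with $\nu(M)=\nu_\eta^6(M)\exp(\frac{\pi i}{5}E)$. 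Assembling the four cases therefore yields the statement.

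The only point needing care is well-definedness. The value $\nu(M)$ must be a function of the matrix $M$ itself, and each earlier theorem was proved for an arbitrary representative $M$ in its class. The formula $E$ depends on the integers $a,b,c,d$, not merely on their residues modulo $5$. It is used only through $\exp(\pi i E/5)$, which the earlier proofs computed exactly modulo $2$ in $\pi i E/5$-units. So nothing further is required, and no compatibility check between cases arises, because the cases are disjoint.

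I do not expect a genuine obstacle here. If anything needs checking, it is that each earlier reduction of the characteristics via (\ref{eqn:char-even}) and (\ref{eqn:character-minus}) used only the residue class of $M$ together with the parity facts from $ad-bc=1$. Those proofs establish exactly this.
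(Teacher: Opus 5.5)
Your proposal is correct and is the paper's proof. The paper also states that the summary follows from Theorems \ref{thm:(1/5,1/5),(1/5,9/5)-I}, \ref{thm:(1/5,1/5),(1/5,9/5)-minusI}, \ref{thm:(1/5,1/5),(1/5,9/5)-T} and \ref{thm:(1/5,1/5),(1/5,9/5)-minusT}, one for each of the four residue classes $\pm I$, $\pm\begin{pmatrix}0&-1\\1&0\end{pmatrix}$ modulo $5$ that define $\Gamma_{\theta,5}$. Your remarks that these cases are exhaustive and disjoint are fine, and the well-definedness worry is moot because $E$ is an explicit function of the entries of $M$.
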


\begin{proof}
The theorem follows from 
Theorems 
\ref{thm:(1/5,1/5),(1/5,9/5)-I},  
\ref{thm:(1/5,1/5),(1/5,9/5)-minusI},  
\ref{thm:(1/5,1/5),(1/5,9/5)-T} and 
\ref{thm:(1/5,1/5),(1/5,9/5)-minusT}. 
\end{proof}

\begin{theorem}
\label{thm:(1/5,1/5),(1/5,9/5)-weight-2}
{\it
Set 
$
M
=
\begin{pmatrix}
a & b \\
c & d
\end{pmatrix}
\in
\Gamma_{\theta,5}
$ 
and 
\begin{equation*}
F(\tau)
=
\frac
{
\eta^6(\tau)
}
{
\theta 
\left[
\begin{array}{c}
\frac15 \vspace{1mm} \\
\frac15
\end{array}
\right] \left(0, \tau  \right) 
\theta 
\left[
\begin{array}{c}
\frac15 \vspace{1mm} \\
\frac95
\end{array}
\right] \left(0, \tau  \right) 
}.
\end{equation*}
Then, 
the multiplier system of $F$ is given by 
\begin{equation*}
\nu_{F}
=
\exp
\left[
\frac{\pi i}{5}
f(M)
\right], 
\end{equation*}
where $f(M)$ is defined by 
\begin{equation*}
f(M)=
\begin{cases}
\displaystyle
\frac{4b}{5}+\frac{8ab}{5}+\frac{8cd}{5} 
&
\text{
if 
$
M
\equiv 
\begin{pmatrix}
1 & 0 \\
0 & 1
\end{pmatrix}
\bmod{5}
$
} \\ 
\displaystyle
\frac{6b}{5}+\frac{8ab}{5}+\frac{8cd}{5} 
&
\text{
if 
$
M
\equiv 
\begin{pmatrix}
-1 & 0 \\
0 & -1
\end{pmatrix}
\bmod{5}
$
} \\ 
\displaystyle
5
+\frac{4d}{5}+\frac{8ab}{5}+\frac{8cd}{5} 
&
\text{
if 
$
M
\equiv 
\begin{pmatrix}
0 & -1 \\
1 & 0
\end{pmatrix}
\bmod{5}
$
} \\ 
\displaystyle
5
-\frac{4d}{5}+\frac{8ab}{5}+\frac{8cd}{5} 
&
\text{
if 
$
M
\equiv 
\begin{pmatrix}
0 & 1 \\
-1 & 0
\end{pmatrix}
\bmod{5}. 
$
} 
\end{cases}
\end{equation*}
}
\end{theorem}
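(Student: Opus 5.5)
The plan is to compute the transformation of $F$ directly from the definition of $\nu_\eta$ and from Theorem \ref{thm:(1/5,1/5),(1/5,9/5)-summary}. Write $G(\tau)$ for the product of the two theta constants. By definition of the multiplier system of $\eta$ we have $\eta(M\tau)=\nu_\eta(M)(c\tau+d)^{1/2}\eta(\tau)$ for every $M\in\Gamma(1)$. Raising this to the sixth power gives
$$\eta^6(M\tau)=\nu_\eta^6(M)(c\tau+d)^{3}\eta^6(\tau).$$
Here $\bigl((c\tau+d)^{1/2}\bigr)^6=(c\tau+d)^3$ unambiguously, since the sixth power of the principal branch of the square root is the integer power.

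For $M\in\Gamma_{\theta,5}$, Theorem \ref{thm:(1/5,1/5),(1/5,9/5)-summary} gives
$$G(M\tau)=\nu_\eta^6(M)\exp\left(\frac{\pi i}{5}E\right)(c\tau+d)\,G(\tau).$$
This is exactly the displayed identity obtained in the proofs of Theorems \ref{thm:(1/5,1/5),(1/5,9/5)-I}--\ref{thm:(1/5,1/5),(1/5,9/5)-minusT}, in which the factor $(c\tau+d)$ appears with integer exponent. Dividing the two identities, the factors $\nu_\eta^6(M)$ cancel and we obtain
$$F(M\tau)=\exp\left(-\frac{\pi i}{5}E\right)(c\tau+d)^{2}F(\tau).$$
So $F$ transforms with weight $2$ and multiplier $\exp\left(\frac{\pi i}{5}f(M)\right)$ with $f(M)=-E$. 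Reading off $-E$ case by case from Theorem \ref{thm:(1/5,1/5),(1/5,9/5)-summary} yields precisely the four expressions in the statement, for $M\equiv\pm I$ and $M\equiv\pm\begin{pmatrix}0&-1\\1&0\end{pmatrix}\bmod 5$. Since these four classes exhaust $\Gamma_{\theta,5}$, the description of $\nu_F$ is complete.

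The only point needing care is that the quotient is meaningful, i.e.\ that $G$ does not vanish on $\mathscr H$. I expect this to be the main obstacle, and I would handle it with the triple product (\ref{eqn:Jacobi-triple}) at $v=0$. The characteristics $\epsilon=\frac15$ and $\epsilon'\in\{\frac15,\frac95\}$ make every factor of the form $1+e^{\pm\pi i\epsilon'}x^{2n-1\pm\epsilon}$ with $|x|<1$. Since the exponents $2n-1\pm\frac15$ are positive, each such factor has modulus bounded below by $1-|x|^{2n-1\pm\frac15}>0$. The products converge absolutely, so $G$ has no zeros and $F$ is holomorphic on $\mathscr H$. With this settled, the rest is bookkeeping of signs in $E$.
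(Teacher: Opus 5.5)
Your proposal is correct and follows the paper's approach. The paper gives no separate proof: the theorem is an immediate consequence of Theorem \ref{thm:(1/5,1/5),(1/5,9/5)-summary}, obtained by dividing the weight-$3$ law for $\eta^6$ by the weight-$1$ law for the theta product, which cancels $\nu_\eta^6$ and gives weight $2$ with $f(M)=-E$. Your triple-product argument that the theta product has no zeros on $\mathscr{H}$ is a correct extra detail the paper leaves implicit, though it is not needed for the transformation identity itself, which holds for meromorphic quotients anyway.
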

\noindent
For each $k\in \mathbb{Z},$ 
we have 
\begin{equation*}
\nu_{F^k}(M)
=
\exp
\left[
\frac{\pi i k}{5}
f(M)
\right]. 
\end{equation*}

\subsection{The case where $k\equiv 0 \bmod{10}$  }

\begin{theorem}
\label{thm:k=0-(1/5,1/5)-(1/5,9/5)}
{\it
Suppose that $k\equiv 0 \bmod{10}.$ 
Then, it follows that 
\begin{equation*}
\Ker \nu_{F^k}=\Gamma_{\theta,5}. 
\end{equation*}
}
\end{theorem}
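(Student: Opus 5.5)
The plan is to show directly that $\nu_{F^k}(M)=1$ for every $M\in\Gamma_{\theta,5}$ when $k\equiv 0 \bmod{10}$. The inclusion $\Ker \nu_{F^k}\subseteq\Gamma_{\theta,5}$ is automatic, since $\nu_{F^k}$ is only defined on $\Gamma_{\theta,5}$. So only the reverse inclusion needs proof.

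Write $k=10m$ with $m\in\mathbb{Z}$. By the formula following Theorem \ref{thm:(1/5,1/5),(1/5,9/5)-weight-2},
\begin{equation*}
\nu_{F^k}(M)=\exp\left[\frac{\pi i\cdot 10m}{5}f(M)\right]=\exp\bigl[2\pi i\, m f(M)\bigr].
\end{equation*}
It therefore suffices to prove that $f(M)\in\mathbb{Z}$ for every $M\in\Gamma_{\theta,5}$.

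I would check integrality case by case, using the four congruence classes listed in Theorem \ref{thm:(1/5,1/5),(1/5,9/5)-weight-2}.
\begin{itemize}
\item If $M\equiv\pm\begin{pmatrix}1&0\\0&1\end{pmatrix}\bmod 5$, then $b\equiv c\equiv 0\bmod 5$. Hence $5$ divides $4b$, $6b$, $8ab$ and $8cd$, so $f(M)=(4b+8ab+8cd)/5$ or $f(M)=(6b+8ab+8cd)/5$ is an integer.
\item If $M\equiv\pm\begin{pmatrix}0&-1\\1&0\end{pmatrix}\bmod 5$, then $a\equiv d\equiv 0\bmod 5$. Hence $5$ divides $4d$, $8ab$ and $8cd$, so $f(M)=5\pm(4d+\ldots)/5$ is an integer.
\end{itemize}
Thus $\exp[2\pi i m f(M)]=1$ in every case, and $\Gamma_{\theta,5}\subseteq\Ker\nu_{F^k}$.

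There is no real obstacle here. The one point to handle carefully is bookkeeping: for each of the four cases one must confirm which entries of $M$ are divisible by $5$, and that each fractional term of $f(M)$ contains such an entry as a factor. It is also worth noting that the whole of Theorem \ref{thm:(1/5,1/5),(1/5,9/5)-summary} is used only through its structure: every exponent is $\frac{1}{5}$ times $\frac{1}{5}$ times a multiple of an entry $\equiv 0 \bmod 5$, plus an integer.
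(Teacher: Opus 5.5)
Your proposal is correct and is exactly the check the paper leaves as ``obvious'': with $k=10m$ the multiplier becomes $\exp[2\pi i\, m f(M)]$, and $f(M)\in\mathbb{Z}$ because every fractional term of $f(M)$ contains a factor $b$ or $c$ (when $M\equiv\pm I \bmod 5$) or $a$ or $d$ (when $M\equiv\pm\begin{pmatrix}0&-1\\1&0\end{pmatrix} \bmod 5$), each divisible by $5$. Your case-by-case verification simply makes that observation explicit.
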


\begin{proof}
It is obvious. 
\end{proof}

\subsection{The case where $k\equiv 5 \bmod{10}$  }

\begin{theorem}
\label{thm:k=5-(1/5,1/5)-(1/5,9/5)}
{\it
Suppose that $k\equiv 5 \bmod{10}.$ 
Then, it follows that 
\begin{equation*}
\Ker \nu_{F^k}=
\left\{
M
=
\begin{pmatrix}
a & b \\
c & d
\end{pmatrix}
\in
\Gamma_{\theta,5} \, | \,
M
\equiv 
\pm
\begin{pmatrix}
1 & 0 \\
0 & 1
\end{pmatrix} 
\bmod{5}
\right\}. 
\end{equation*}
As a  coset decomposition of $\Gamma_{\theta,5}$ modulo $\Ker \nu_{F^k},$ 
we may choose 
$
\begin{pmatrix}
1 & 0 \\
0 & 1
\end{pmatrix}
$ 
and 
$
\begin{pmatrix}
0 & -1 \\
1 & 0
\end{pmatrix}. 
$ 
}
\end{theorem}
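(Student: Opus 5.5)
We compute $\nu_{F^k}(M)=\exp\left[\frac{\pi i k}{5}f(M)\right]$ from Theorem \ref{thm:(1/5,1/5),(1/5,9/5)-weight-2}, writing $k=5(2m+1)$. Then
\begin{equation*}
\nu_{F^k}(M)=\exp\left[\pi i (2m+1) f(M)\right].
\end{equation*}
So $M\in\Ker\nu_{F^k}$ exactly when $(2m+1)f(M)\equiv 0 \bmod 2$. First we check that $f(M)$ is an integer in every case. We will then reduce the question to the parity of $f(M)$, using that $2m+1$ is odd and that $f(M)\in\mathbb{Z}$ makes $(2m+1)f(M)\equiv f(M)\bmod 2$.

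Since $f$ is given case by case according to $M \bmod 5$, we treat the four residue classes separately.

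If $M\equiv I\bmod 5$, then $b\equiv c\equiv 0\bmod 5$. So $\frac{4b}{5},\frac{8ab}{5},\frac{8cd}{5}$ are integers, and $f(M)=4\cdot\frac{b}{5}+8\cdot\frac{ab}{5}+8\cdot\frac{cd}{5}$ is even.

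If $M\equiv -I\bmod 5$, again $b\equiv c\equiv 0$. The terms $\frac{8ab}{5},\frac{8cd}{5}$ are even integers, while $\frac{6b}{5}=6\cdot\frac b5$ is also even. Hence $f(M)$ is even.

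If $M\equiv \pm T\bmod 5$ with $T=\begin{pmatrix}0&-1\\1&0\end{pmatrix}$, then $a\equiv d\equiv 0\bmod 5$. So $\frac{4d}{5}$, $\frac{8ab}{5}$ and $\frac{8cd}{5}$ are all even integers. In both of these cases $f(M)=5\pm(\text{even})$ is odd.

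Thus $\nu_{F^k}(M)=1$ for $M\equiv\pm I$ and $\nu_{F^k}(M)=-1$ for $M\equiv \pm T$. This gives the claimed kernel.

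For the coset decomposition, note that $\nu_{F^k}$ is a homomorphism $\Gamma_{\theta,5}\to\{\pm1\}$; this follows since $F^k$ has weight $0$ and $k\equiv 5 \bmod 10$. It is onto, because $T=\begin{pmatrix}0&-1\\1&0\end{pmatrix}\in\Gamma_{\theta,5}$ and $\nu_{F^k}(T)=-1$. Hence the kernel has index $2$, and $I$ and $T$ represent the two cosets. Alternatively, without invoking the homomorphism property, one can argue directly. Reduction mod $5$ is multiplicative, and $T\cdot(\pm I)\equiv\pm T$, so multiplying by $T$ maps the class $\{M\equiv\pm I\}$ onto $\{M\equiv\pm T\}$.

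The only delicate step is the integrality bookkeeping. Each fraction must really be an integer, and it must be even, so that reducing modulo $2$ is legitimate. This depends on using the congruences of $a,b,c,d$ mod $5$ in each class. The rest is immediate.
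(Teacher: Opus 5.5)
Your argument is correct and is exactly the case-by-case parity check of $f(M)$ over the four residue classes mod $5$ that the paper compresses into ``It is obvious''; your direct coset argument via $T\cdot(\pm I)\equiv \pm T \bmod 5$ is also sound. One slip: $F$ has weight $3-1=2$ (weight $3$ from $\eta^6$, weight $1$ from the product of the two theta constants), so $F^k$ has weight $2k$, not $0$. $\nu_{F^k}$ is still a character because the weight is an integer, so your conclusion is unaffected.
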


\begin{proof}
It is obvious. 
\end{proof}

\subsection{The case where $k\equiv \pm1, \pm 3 \bmod{10}$  }

\begin{theorem}
\label{thm:k=pm1-pm3-(1/5,1/5)-(1/5,9/5)}
{\it
Suppose that $k\equiv \pm1, \pm3 \bmod{10}.$ 
Then, it follows that 
\begin{align*}
\Ker \nu_{F^k}=&
\left\{
M
=
\begin{pmatrix}
a & b \\
c & d
\end{pmatrix}
\in
\Gamma_{\theta,5} \, | \, \,
\frac{b}{5} \equiv \frac{c}{5} \bmod{5} \,\,   (b,c \in 5 \mathbb{Z}) 
\right\} \\
=&
\Big\{
M
\in
\Gamma_{\theta,5} \, | \, \,
M
\equiv 
\pm
\begin{pmatrix}
1 & 0 \\
0 & 1
\end{pmatrix}, 
\pm
\begin{pmatrix}
1 & 5 \\
5 & 1
\end{pmatrix}, 
\pm
\begin{pmatrix}
1 & -5 \\
-5 & 1
\end{pmatrix}, 
\pm
\begin{pmatrix}
1 & 10 \\
10 & 1
\end{pmatrix}, 
\pm
\begin{pmatrix}
1 & -10 \\
-10 & 1
\end{pmatrix},  \\
&
\hspace{25mm}
\pm
\begin{pmatrix}
6 & 0 \\
0 & -4
\end{pmatrix}, 
\pm
\begin{pmatrix}
6 & 5 \\
5& -4
\end{pmatrix}, 
\pm
\begin{pmatrix}
6 & -5 \\
-5& -4
\end{pmatrix}, 
\pm
\begin{pmatrix}
6 & 10 \\
10& -4
\end{pmatrix}, 
\pm
\begin{pmatrix}
6 & -10 \\
-10& -4
\end{pmatrix}, \\
&
\hspace{25mm}
\pm
\begin{pmatrix}
11 & 0 \\
0 & -9
\end{pmatrix}, 
\pm
\begin{pmatrix}
11 & 5 \\
5 & -9
\end{pmatrix}, 
\pm
\begin{pmatrix}
11 & -5 \\
-5 & -9
\end{pmatrix}, 
\pm
\begin{pmatrix}
11 & 10 \\
10 & -9
\end{pmatrix}, 
\pm
\begin{pmatrix}
11 & -10 \\
-10 & -9
\end{pmatrix},  \\
&
\hspace{25mm}
\pm
\begin{pmatrix}
-9 & 0 \\
0 & 11
\end{pmatrix}, 
\pm
\begin{pmatrix}
-9 & 5 \\
5 & 11
\end{pmatrix}, 
\pm
\begin{pmatrix}
-9 & -5 \\
-5 & 11
\end{pmatrix}, 
\pm
\begin{pmatrix}
-9 & 10 \\
10 & 11
\end{pmatrix}, 
\pm
\begin{pmatrix}
-9 & -10 \\
-10 & 11
\end{pmatrix},  \\
&
\hspace{25mm}
\pm
\begin{pmatrix}
-4 & 0 \\
0 & 6
\end{pmatrix}, 
\pm
\begin{pmatrix}
-4 & 5 \\
5 & 6
\end{pmatrix}, 
\pm
\begin{pmatrix}
-4 & -5 \\
-5 & 6
\end{pmatrix}, 
\pm
\begin{pmatrix}
-4 & 10 \\
10 & 6
\end{pmatrix}, 
\pm
\begin{pmatrix}
-4 & -10 \\
-10 & 6
\end{pmatrix}  
\bmod{25}
\Big\}. 
\end{align*}
As a  coset decomposition of $\Gamma_{\theta,5}$ modulo $\Ker \nu_{F^k},$ 
we may choose 
\begin{equation*}
\begin{pmatrix}
1 & n \\
0 & 1
\end{pmatrix} 
\,\, 
(n=0,5,10,15,20). 
\end{equation*}
}
\end{theorem}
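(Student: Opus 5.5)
The plan is to read off $\Ker \nu_{F^k}$ directly from Theorem \ref{thm:(1/5,1/5),(1/5,9/5)-weight-2}, using $\nu_{F^k}(M)=\exp[\pi i k f(M)/5]$. Since $k\equiv \pm1,\pm3 \bmod{10}$, $k$ is prime to $10$. Also $5f(M)\in\mathbb{Z}$. Hence $kf(M)/10\in\mathbb{Z}$ if and only if $f(M)\in 10\mathbb{Z}$. So the kernel consists exactly of those $M\in\Gamma_{\theta,5}$ with $f(M)\equiv 0 \bmod{10}$, and I will check this condition in each of the four congruence classes modulo $5$.

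\emph{The classes $M\equiv\pm\begin{pmatrix}0&-1\\1&0\end{pmatrix} \bmod 5$.} Here $a=5a'$ and $d=5d'$, so
$$f(M)=5\pm 4d'+8a'b+8cd'.$$
This is an odd integer, hence never in $10\mathbb{Z}$. Consequently every element of the kernel satisfies $M\equiv\pm I \bmod 5$.

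\emph{The classes $M\equiv\pm I \bmod 5$.} Write $b=5b'$ and $c=5c'$. For $M\equiv I$ we get $f=4b'+8ab'+8c'd$, an even integer. Reducing modulo $5$ with $a\equiv d\equiv1$ gives $f\equiv 12b'+8c'\equiv 2(b'-c') \bmod 5$. For $M\equiv -I$ we get $f=6b'+8ab'+8c'd$, and with $a\equiv d\equiv-1$ this gives $f\equiv 6b'-8b'-8c'\equiv -2(b'-c') \bmod 5$. In both cases $f\in10\mathbb{Z}$ if and only if $b'\equiv c' \bmod 5$, which is the first description in the statement.

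For the explicit list modulo $25$, note that $bc=25b'c'\equiv0 \bmod 25$, so $ad\equiv1 \bmod{25}$. With $a\equiv\pm1 \bmod 5$, the residue $a$ runs through $\pm\{1,6,11,-9,-4\}$ and $d\equiv a^{-1}$; for instance $6\cdot(-4)=-24\equiv 1$ and $11\cdot(-9)=-99\equiv1$. The common value $b\equiv c \bmod{25}$ then runs through $0,\pm5,\pm10$. This reproduces the displayed table.

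For the coset statement, I would use the matrices $T_n=\begin{pmatrix}1&n\\0&1\end{pmatrix}$ with $n=5n'$. These lie in $\Gamma_{\theta,5}$ and satisfy $f(T_n)=4n'$ by the formula above. Hence $\nu_{F^k}(T_n)=\exp(4\pi i k n'/5)$, and these values are distinct for $n'=0,\dots,4$, so the $T_n$ lie in distinct cosets. For covering, given $M\equiv\pm I \bmod 5$, I would multiply by a suitable $T_{-n}$ to shift $b'$ modulo $5$ so that $b'\equiv c'$. The congruence class of $MT_{-n}$ stays $\pm I$, so $MT_{-n}$ lands in the kernel.

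The main obstacle is covering the elements with $M\equiv\pm\begin{pmatrix}0&-1\\1&0\end{pmatrix} \bmod 5$. By the parity argument above, $\nu_{F^k}$ takes values there that are not fifth roots of unity of the form $\nu_{F^k}(T_n)$. So the five $T_n$ give a complete system of representatives only for the subgroup of $\Gamma_{\theta,5}$ consisting of matrices $\equiv\pm I \bmod 5$. For all of $\Gamma_{\theta,5}$ one must additionally adjoin $\begin{pmatrix}0&-1\\1&0\end{pmatrix}T_n$. At that step I would therefore verify carefully which group the decomposition is meant for, since as literally stated the index appears to be $10$ rather than $5$.
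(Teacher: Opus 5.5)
Your computation of $\Ker\nu_{F^k}$ is correct and follows the paper's route. You reduce to $f(M)\equiv 0 \bmod{10}$, rule out the classes $M\equiv\pm\begin{pmatrix}0&-1\\1&0\end{pmatrix} \bmod 5$, reduce $f$ modulo $5$ in the classes $M\equiv\pm I$, and use $ad\equiv 1 \bmod{25}$ for the table. Your observation that $f$ is odd in the excluded classes is exactly the justification the paper's terse ``which implies'' leaves out. There is one arithmetic slip: with $n=5n'$, your formula gives $f(T_n)=4n'+8n'=12n'$, not $4n'$. Nothing downstream changes, because $\nu_{F^k}(T_n)=\exp(12\pi i k n'/5)$ are still five distinct fifth roots of unity when $(k,5)=1$.

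Your doubt about the coset claim is well founded. The defect is in the statement, not in your argument. Take $S=\begin{pmatrix}0&-1\\1&0\end{pmatrix}\in\Gamma_{\theta,5}$. Then $f(S)=5$, so $\nu_{F^k}(S)=(-1)^k=-1$. On the other hand, every element of $\Ker\nu_{F^k}\,T_n$ is $\equiv\pm I \bmod 5$, so $S$ lies in none of the five cosets.

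Since $F$ has even weight, $\nu_{F^k}$ is a character of $\Gamma_{\theta,5}$, and its image is the group of tenth roots of unity. Hence the index is $10$, not $5$. A complete system of representatives is $T_n$ together with $T_nS=\begin{pmatrix}n&-1\\1&0\end{pmatrix}$ for $n=0,5,10,15,20$. On these, $\nu_{F^k}$ takes the ten distinct values $\pm\exp(12\pi i k n'/5)$.

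The paper's proof only says the decomposition is obtained from the values of $\nu_{F^k}$; actually carrying that out yields your corrected list. The paper seems to have swapped the coset lists between this theorem and the next one. For $k\equiv\pm2,\pm4 \bmod{10}$ one has $\nu_{F^k}(S)=1$, so the five $T_n$ suffice there, and the ten-element list belongs here.
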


\begin{proof}
From the definition of $f(M)$, 
it follows that
\begin{equation*}
M \in 
\Ker \nu_{F^k} 
\Longleftrightarrow 
f(M) \equiv 0 \bmod{10},
\end{equation*}
which implies that 
$
M
\equiv 
\pm
\begin{pmatrix}
1 & 0 \\
0 & 1
\end{pmatrix}
\bmod{5}. 
$ 
\par
Suppose that 
$
M
\equiv 
\begin{pmatrix}
1 & 0 \\
0 & 1
\end{pmatrix}
\bmod{5}, 
$ 
which implies that 
\begin{equation*}
f(M)\equiv 0 \bmod{10}
\Longleftrightarrow 
\frac{b}{5}\equiv \frac{c}{5} \bmod{5} 
\Longleftrightarrow 
(b,c)
\equiv 
(0,0), 
\pm (5,5), 
\pm (10,10) \bmod{25}. 
\end{equation*}
Since $ad-bc=1,$ it follows that $ad\equiv 1 \bmod{25}, $ 
which implies that 
\begin{equation*}
(a,d)\equiv (1,1), (6,-4), (11,-9), (-9,11), (-4,6) \bmod{25}. 
\end{equation*}
\par
Suppose that 
$
M
\equiv 
\begin{pmatrix}
-1 & 0 \\
0 & -1
\end{pmatrix}
\bmod{5}, 
$ 
which implies that 
\begin{equation*}
f(M)\equiv 0 \bmod{10}
\Longleftrightarrow 
\frac{b}{5}\equiv \frac{c}{5} \bmod{5} 
\Longleftrightarrow 
(b,c)
\equiv 
(0,0), 
\pm (5,5), 
\pm (10,10) \bmod{25}. 
\end{equation*}
Since $ad-bc=1,$ it follows that $ad\equiv 1 \bmod{25}, $ 
which implies that 
\begin{equation*}
(a,d)\equiv (-1,-1), (4,-6), (9,-11), (-11,9), (-6,4) \bmod{25}. 
\end{equation*}
\par
The coset decomposition can be obtained by considering the values of $\nu_{F^{k}  }(M).$
\end{proof}

\subsection{The case where $k\equiv \pm2, \pm 4 \bmod{10}$  }

\begin{theorem}
\label{thm:k=pm2-pm4-(1/5,1/5)-(1/5,9/5)}
{\it
Suppose that $k\equiv \pm2, \pm4 \bmod{10}.$ 
Then, it follows that 
\begin{align*}
\Ker \nu_{F^k}=&
\left\{
M
=
\begin{pmatrix}
a & b \\
c & d
\end{pmatrix}
\in
\Gamma_{\theta,5} \, | \, \,
\frac{b}{5} \equiv \frac{c}{5} \bmod{5} \,\,   (b,c \in 5 \mathbb{Z}) \,\,
\mathrm{or} \,\,
\frac{a}{5} \equiv -\frac{d}{5} \bmod{5} \,\,   (a,d \in 5 \mathbb{Z}) 
\right\} \\
=&
\Big\{
M
\in
\Gamma_{\theta,5} \, | \, \,
M
\equiv 
\pm
\begin{pmatrix}
1 & 0 \\
0 & 1
\end{pmatrix}, 
\pm
\begin{pmatrix}
1 & 5 \\
5 & 1
\end{pmatrix}, 
\pm
\begin{pmatrix}
1 & -5 \\
-5 & 1
\end{pmatrix}, 
\pm
\begin{pmatrix}
1 & 10 \\
10 & 1
\end{pmatrix}, 
\pm
\begin{pmatrix}
1 & -10 \\
-10 & 1
\end{pmatrix},  \\
&
\hspace{13mm}
\pm
\begin{pmatrix}
6 & 0 \\
0 & -4
\end{pmatrix}, 
\pm
\begin{pmatrix}
6 & 5 \\
5& -4
\end{pmatrix}, 
\pm
\begin{pmatrix}
6 & -5 \\
-5& -4
\end{pmatrix}, 
\pm
\begin{pmatrix}
6 & 10 \\
10& -4
\end{pmatrix}, 
\pm
\begin{pmatrix}
6 & -10 \\
-10& -4
\end{pmatrix}, \\
&
\hspace{13mm}
\pm
\begin{pmatrix}
11 & 0 \\
0 & -9
\end{pmatrix}, 
\pm
\begin{pmatrix}
11 & 5 \\
5 & -9
\end{pmatrix}, 
\pm
\begin{pmatrix}
11 & -5 \\
-5 & -9
\end{pmatrix}, 
\pm
\begin{pmatrix}
11 & 10 \\
10 & -9
\end{pmatrix}, 
\pm
\begin{pmatrix}
11 & -10 \\
-10 & -9
\end{pmatrix},  \\
&
\hspace{13mm}
\pm
\begin{pmatrix}
-9 & 0 \\
0 & 11
\end{pmatrix}, 
\pm
\begin{pmatrix}
-9 & 5 \\
5 & 11
\end{pmatrix}, 
\pm
\begin{pmatrix}
-9 & -5 \\
-5 & 11
\end{pmatrix}, 
\pm
\begin{pmatrix}
-9 & 10 \\
10 & 11
\end{pmatrix}, 
\pm
\begin{pmatrix}
-9 & -10 \\
-10 & 11
\end{pmatrix},  \\
&
\hspace{13mm}
\pm
\begin{pmatrix}
-4 & 0 \\
0 & 6
\end{pmatrix}, 
\pm
\begin{pmatrix}
-4 & 5 \\
5 & 6
\end{pmatrix}, 
\pm
\begin{pmatrix}
-4 & -5 \\
-5 & 6
\end{pmatrix}, 
\pm
\begin{pmatrix}
-4 & 10 \\
10 & 6
\end{pmatrix}, 
\pm
\begin{pmatrix}
-4 & -10 \\
-10 & 6
\end{pmatrix},  \\
&
\hspace{13mm}
\pm
\begin{pmatrix}
0 & -1 \\
1 & 0
\end{pmatrix}, 
\pm
\begin{pmatrix}
0 & 4 \\
6 & 0
\end{pmatrix}, 
\pm
\begin{pmatrix}
0 & 9 \\
11 & 0
\end{pmatrix}, 
\pm
\begin{pmatrix}
0 & -11 \\
-9 & 0
\end{pmatrix}, 
\pm
\begin{pmatrix}
0 & -6 \\
-4 & 0
\end{pmatrix},  \\
&
\hspace{13mm}
\pm
\begin{pmatrix}
5 & -1 \\
1 & -5
\end{pmatrix}, 
\pm
\begin{pmatrix}
5 & 4 \\
6 & -5
\end{pmatrix}, 
\pm
\begin{pmatrix}
5 & 9 \\
11 & -5
\end{pmatrix}, 
\pm
\begin{pmatrix}
5 & -11 \\
-9 & -5
\end{pmatrix}, 
\pm
\begin{pmatrix}
5 & -6 \\
-4 & -5
\end{pmatrix},  \\
&
\hspace{13mm}
\pm
\begin{pmatrix}
-5 & -1 \\
1 & 5
\end{pmatrix}, 
\pm
\begin{pmatrix}
-5 & 4 \\
6 & 5
\end{pmatrix}, 
\pm
\begin{pmatrix}
-5 & 9 \\
11 & 5
\end{pmatrix}, 
\pm
\begin{pmatrix}
-5 & -11 \\
-9 & 5
\end{pmatrix}, 
\pm
\begin{pmatrix}
-5 & -6 \\
-4 & 5
\end{pmatrix},  \\
&
\hspace{13mm}
\pm
\begin{pmatrix}
10 & -1 \\
1 & -10
\end{pmatrix}, 
\pm
\begin{pmatrix}
10 & 4 \\
6 & -10
\end{pmatrix}, 
\pm
\begin{pmatrix}
10 & 9 \\
11 & -10
\end{pmatrix}, 
\pm
\begin{pmatrix}
10 & -11 \\
-9 & -10
\end{pmatrix}, 
\pm
\begin{pmatrix}
10 & -6 \\
-4 & -10
\end{pmatrix},  \\
&
\hspace{13mm}
\pm
\begin{pmatrix}
-10 & -1 \\
1 & 10
\end{pmatrix}, 
\pm
\begin{pmatrix}
-10 & 4 \\
6 & 10
\end{pmatrix}, 
\pm
\begin{pmatrix}
-10 & 9 \\
11 & 10
\end{pmatrix}, 
\pm
\begin{pmatrix}
-10 & -11 \\
-9 & 10
\end{pmatrix}, 
\pm
\begin{pmatrix}
-10 & -6 \\
-4 & 10
\end{pmatrix}
%
%
\bmod{25}
\Big\}. 
\end{align*}
As a  coset decomposition of $\Gamma_{\theta,5}$ modulo $\Ker \nu_{F^k},$ 
we may choose 
\begin{equation*}
\begin{pmatrix}
1 & n \\
0 & 1
\end{pmatrix}, \,\,
\begin{pmatrix}
n & -1 \\
1 & 0
\end{pmatrix}
\,\, 
(n=0,5,10,15,20). 
\end{equation*}
}
\end{theorem}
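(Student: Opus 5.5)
The plan is to read off the kernel directly from Theorem \ref{thm:(1/5,1/5),(1/5,9/5)-weight-2}. Since $\nu_{F^k}(M)=\exp\left[\frac{\pi i k}{5}f(M)\right]$, we have $M\in\Ker\nu_{F^k}$ iff $kf(M)\in 10\mathbb{Z}$. In every case $f(M)=N/5$ with $N\in\mathbb{Z}$ (or $5+N/5$). Write $k=2k'$ with $k'$ odd and $5\nmid k'$. Then the condition becomes $k'N\in 25\mathbb{Z}$, i.e. $f(M)\in 5\mathbb{Z}$, and this is independent of the particular $k$ in the class $\pm2,\pm4 \bmod 10$. So the whole proof reduces to determining when $f(M)\equiv 0 \bmod 5$ in each of the four residue classes of $M$ modulo $5$.

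For $M\equiv I\bmod 5$, I would write $b=5b'$, $c=5c'$ and use $a\equiv d\equiv 1\bmod 5$. This gives
\begin{equation*}
f(M)=4b'+8ab'+8c'd\equiv 12b'+8c'\equiv 2(b'-c') \bmod 5,
\end{equation*}
so the condition is $b/5\equiv c/5\bmod 5$, exactly as in the proof of Theorem \ref{thm:k=pm1-pm3-(1/5,1/5)-(1/5,9/5)}. For $M\equiv -I$ the same substitution gives $f(M)\equiv 6b'-8b'-8c'\equiv -2(b'-c')\bmod 5$, which is the same condition.

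For $M\equiv\begin{pmatrix}0&-1\\1&0\end{pmatrix}\bmod 5$, I would write $a=5a'$, $d=5d'$ and use $b\equiv -1$, $c\equiv 1$. Then
\begin{equation*}
f(M)=5+4d'+8a'b+8cd'\equiv 12d'-8a'\equiv 2(a'+d')\bmod 5,
\end{equation*}
so the condition is $a/5\equiv -d/5\bmod 5$. The case $M\equiv\begin{pmatrix}0&1\\-1&0\end{pmatrix}$ gives $f(M)\equiv 5-4d'+8a'+8d'\cdot(-1)\cdot\ldots$; the signs simply flip, giving $-2(a'+d')$ and the same condition. This establishes the first description of $\Ker\nu_{F^k}$.

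For the explicit list modulo $25$, I would proceed as in Theorem \ref{thm:k=pm1-pm3-(1/5,1/5)-(1/5,9/5)}.
\begin{itemize}
\item In the $\pm I$ classes, the pairs $(b,c)$ run over $(0,0),\pm(5,5),\pm(10,10)$ mod $25$. Then $ad\equiv 1\bmod 25$ forces $(a,d)\equiv(1,1),(6,-4),(11,-9),(-9,11),(-4,6)$ up to an overall sign.
\item In the $\pm T$ classes, $(a,d)\equiv(0,0),\pm(5,-5),\pm(10,-10)$. Then $bc\equiv -1\bmod 25$ with $c\equiv 1\bmod 5$ gives $(b,c)\equiv(-1,1),(4,6),(9,11),(-11,-9),(-6,-4)$ up to sign.
\end{itemize}
Combining these choices reproduces the displayed table.

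For the coset decomposition, I would evaluate $\nu_{F^k}$ on the proposed representatives.
\begin{itemize}
\item For $\begin{pmatrix}1&n\\0&1\end{pmatrix}$, $f=4n/5+8n/5=12n/5$, which is $\equiv 0\bmod 5$ only when $n\equiv 0\bmod 25$. So these five matrices lie in distinct cosets.
\item Each $\begin{pmatrix}n&-1\\1&0\end{pmatrix}$ lies in the $T$-class and $\nu_{F^k}$ takes distinct values on them.
\end{itemize}
Because the kernel is the kernel of a character, the cosets correspond to the values of $\nu_{F^k}$, and the image is the group of $5$th roots of unity. Since $\nu$ is a character, the kernel has index $5$ in $\Gamma_{\theta,5}$. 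However, $\Gamma_{\theta,5}$ contains two mod-$5$ classes that both meet the kernel, so I must reconcile this with the $10$ representatives listed. This bookkeeping, checking which representatives genuinely give distinct cosets (one expects the index to equal the size of the image), is the step I expect to be the main obstacle. I would settle it by directly computing $\nu_{F^k}(A^{-1}B)$ for pairs of representatives $A,B$ using the case formula for $f$.
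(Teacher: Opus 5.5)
Your treatment of the kernel follows the paper's proof. You reduce membership to $f(M)\equiv 0 \pmod 5$, treat the four classes of $M$ modulo $5$ separately, and then solve $ad\equiv 1$, resp.\ $bc\equiv -1 \pmod{25}$, to get the table. Two small repairs are needed:
\begin{itemize}
\item $k'=k/2$ need not be odd (take $k=4$). Only $5\nmid k'$ is used, so this is harmless.
\item $f(M)$ is in fact an integer in every class, because $5\mid b,c$ (resp.\ $5\mid a,d$). Hence for even $k$ the values of $\nu_{F^k}$ are fifth roots of unity.
\end{itemize}
Your garbled last case does come out to $8a'-12d'\equiv -2(a'+d')\pmod 5$.

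The genuine gap is the step you defer as bookkeeping. It cannot be carried out, because the final assertion of the statement is false, and your own index count shows this.
\begin{itemize}
\item Since $F$ has weight $2$, $\nu_{F^k}$ is a homomorphism. It is nontrivial, since $f(T^5)=12$ for $T^5=\begin{pmatrix}1&5\\0&1\end{pmatrix}$. So its image is the full group of fifth roots of unity and $[\Gamma_{\theta,5}:\Ker\nu_{F^k}]=5$. Consistently, the table consists of $100$ of the $500$ residue classes of $\Gamma_{\theta,5}$ modulo $25$.
\item For $S=\begin{pmatrix}0&-1\\1&0\end{pmatrix}$ one has $f(S)=5$, so $\nu_{F^k}(S)=e^{\pi i k}=1$. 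Indeed $S$ lies in the kernel by the statement's own first description ($a=d=0$).
\item Since $\begin{pmatrix}n&-1\\1&0\end{pmatrix}=\begin{pmatrix}1&n\\0&1\end{pmatrix}S$, each of the last five listed matrices lies in the same coset as the corresponding translation. Already for $n=0$, both $I$ and $S$ lie in the kernel. Equivalently, $f=5-8n/5$ and $f=12n/5$ differ by $4n-5\equiv 0\pmod 5$.
\end{itemize}
So the five distinct values you find on the $S$-class representatives are exactly the five values on the translations. The correct conclusion is that the matrices $\begin{pmatrix}1&n\\0&1\end{pmatrix}$, $n=0,5,10,15,20$, alone form a complete system of coset representatives.

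The paper's proof only asserts that the decomposition ``can be obtained by considering the values of $\nu_{F^k}(M)$''. Carried out, that gives five values, not ten. The ten-element list is the right one for $k\equiv\pm1,\pm3\pmod{10}$, where $\nu_{F^k}(S)=-1$ and the index is $10$. The two lists appear to have been interchanged between the two theorems. You should state this correction rather than try to confirm ten cosets.
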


\begin{proof}
Suppose that 
$
M
\equiv 
\pm
\begin{pmatrix}
1 & 0 \\
0 & 1
\end{pmatrix}
\bmod{5}. 
$ 
From the definition of $f(M),$ 
it follows that 
\begin{equation*}
M \in 
\Ker \nu_{F^k} 
\Longleftrightarrow 
\frac{b}{5} \equiv \frac{c}{5}  \bmod{5}. 
\end{equation*}
\par
Suppose that 
$
M
\equiv 
\pm
\begin{pmatrix}
0 & -1 \\
1 & 0
\end{pmatrix}
\bmod{5}. 
$ 
From the definition of $f(M),$ 
it follows that 
\begin{equation*}
M \in 
\Ker \nu_{F^k} 
\Longleftrightarrow 
\frac{a}{5} \equiv -\frac{d}{5}  \bmod{5} 
\Longleftrightarrow
(a,d)
\equiv 
(0,0), 
\pm
(5,-5),  
\pm(10,-10) \bmod{25}. 
\end{equation*}
Since $ad-bc=1,$ it follows that $bc\equiv -1 \bmod{25},$ 
which implies that 
\begin{equation*}
(b,c)\equiv 
\pm(-1,1), 
\pm(4,6), 
\pm(9,11), 
\pm(-11,-9), 
\pm(-6,-4) \bmod{25}. 
\end{equation*}
\par
The coset decomposition can be obtained by considering the values of $\nu_{F^{k}  }(M).$
\end{proof}

\section{Modular forms on $\Gamma_{\theta,5}$ (2) }
\label{sec:modularforms-(3/5,3/5),(3/5,7/5)}

\subsection{The case where 
$
M
\equiv
\begin{pmatrix}
1 & 0 \\
0 & 1
\end{pmatrix} 
\bmod{5}. 
$ 
}

\begin{theorem}
\label{thm:(3/5,3/5),(3/5,7/5)-I}
{\it
Set 
$
M
=
\begin{pmatrix}
a & b \\
c & d
\end{pmatrix}
\in
\Gamma_{\theta,5}
$ 
with 
$
M
\equiv
\begin{pmatrix}
1 & 0 \\
0 & 1
\end{pmatrix} 
\bmod{5}. 
$ 
Then the multiplier system of 
$
\theta 
\left[
\begin{array}{c}
\frac35 \vspace{1mm} \\
\frac35
\end{array}
\right] \left(0, \tau  \right) 
\theta 
\left[
\begin{array}{c}
\frac35 \vspace{1mm} \\
\frac75
\end{array}
\right] \left(0, \tau  \right) 
$ 
is given by 
\begin{equation*}
\nu(M)
=
\nu_{\eta}^6(M)
\exp
\left[
\frac{\pi i}{5}
\left(
-
\frac{6b}{5}
-
\frac{2ab}{5}
-
\frac{2cd}{5}
\right)
\right]. 
\end{equation*}
}
\end{theorem}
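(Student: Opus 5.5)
The plan is to follow the proof of Theorem \ref{thm:(1/5,1/5),(1/5,9/5)-I} step by step, with the characteristics $\frac15,\frac15$ and $\frac15,\frac95$ replaced by $\frac35,\frac35$ and $\frac35,\frac75$.

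\emph{Step 1.} Apply Theorem \ref{thm:transformation-general} at $v=0$ to each factor and multiply. This gives $\nu_\eta^6(M)(c\tau+d)$ times the product of the two theta constants with characteristics
$$\left[\begin{array}{c}3a/5+3c/5-ac\\ 3b/5+3d/5-bd\end{array}\right],\qquad \left[\begin{array}{c}3a/5+7c/5-ac\\ 3b/5+7d/5-bd\end{array}\right],$$
times an explicit root of unity. For $(\epsilon,\epsilon')=(\frac35,\frac35)$ we have $\frac{1-\epsilon}{2}=\frac{1-\epsilon'}{2}=\frac15$. For $(\frac35,\frac75)$ we have $\frac{1-\epsilon}{2}=\frac15$ and $\frac{1-\epsilon'}{2}=-\frac15$. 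Adding the two exponents, the $bc/25$ terms cancel. Since $(b-1)(d-1)$ is even, the sum reduces to
$$\frac{2a}{5}(b+d-bd)-\frac{2ab}{25}-\frac{2cd}{25}-\frac25 \pmod 2.$$

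\emph{Step 2.} Use $M\equiv I\bmod 5$ together with the parity patterns $(a,c),(b,d)\not\equiv(0,0)\bmod 2$. From these, check that each new characteristic differs from the original one by even integers:
$$\frac{3(a-1)}{5}+\frac{3c}{5}-ac,\quad \frac{3b}{5}+\frac{3(d-1)}{5}-bd,\quad \frac{3(a-1)}{5}+\frac{7c}{5}-ac,\quad \frac{3b}{5}+\frac{7(d-1)}{5}-bd\ \equiv 0 \pmod 2.$$
Equation (\ref{eqn:char-even}) then returns the original theta constants, at the cost of the factor
$$\exp\Big(\pi i\cdot\frac35\cdot\frac12\Big[\frac{3b}{5}+\frac{3(d-1)}{5}-bd+\frac{3b}{5}+\frac{7(d-1)}{5}-bd\Big]\Big).$$
Only the lower shifts contribute, because (\ref{eqn:char-even}) involves only $\epsilon n$.

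\emph{Step 3.} Collect everything into a single exponent $E$ and reduce it modulo $2$. This is the step I expect to be the main obstacle. I would write $E$ over the denominator $25$ and insert $a=1+5\alpha$, $d=1+5\delta$, $b=5\beta$, $c=5\gamma$. Terms such as $\frac{2a}{5}(b+d-bd)$, $\frac35(d-1)$ and $\frac{3bd}{5}$ then become integers, and their parity has to be tracked. The parity relations from Step 2, e.g. $(b-1)(d-1)\equiv0$ and $(a-1)(c-1)\equiv 0 \bmod 2$, should absorb these integer parts.

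The aim is to reach
$$E\equiv\frac{2}{25}(-3b-ab-cd)=\frac15\Big(-\frac{6b}{5}-\frac{2ab}{5}-\frac{2cd}{5}\Big)\pmod 2,$$
mirroring the rearrangement $\frac{2}{25}\{\cdots+10(a-1)-10a(b-1)(d-1)\cdots\}$ used in Theorem \ref{thm:(1/5,1/5),(1/5,9/5)-I}. As a safeguard I would check the final congruence numerically on a few matrices, such as $\begin{pmatrix}1&5\\0&1\end{pmatrix}$ and $\begin{pmatrix}1&0\\5&1\end{pmatrix}$, before committing to the algebraic rearrangement.
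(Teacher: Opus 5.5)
Your proposal is correct and follows the paper's proof essentially step for step: Theorem~\ref{thm:transformation-general} at $v=0$ for both factors (the $bc/25$ terms cancel), the parity checks that let (\ref{eqn:char-even}) restore the original characteristics with a factor coming only from the lower shifts, and a reduction of $E=\frac{1}{25}(9b+15d-15-15bd)+\frac{2a}{5}(b+d-bd)-\frac{2ab}{25}-\frac{2cd}{25}-\frac25$ modulo $2$. Your substitution $a=1+5\alpha$, $b=5\beta$, $d=1+5\delta$ does close Step 3: it gives $E=-\frac{6b}{25}-\frac{2ab}{25}-\frac{2cd}{25}+(2\alpha+2a\delta-10a\beta\delta+3\delta-15\beta\delta)$, and the bracket is $\equiv\delta(1-\beta)\equiv(b-1)(d-1)\equiv 0 \pmod 2$, which is exactly what the paper's rearrangement into $\frac{2}{25}\{\cdots\}$ accomplishes.
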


\begin{proof}
Since $ad-bc=1,$ 
it follows that 
\begin{equation*}
(a,c)\equiv (1,1),(1,0),(0,1) \bmod{2} \, \mathrm{and} \,
(b,d)\equiv (1,1),(1,0),(0,1) \bmod{2},
\end{equation*}
which implies that 
\begin{align*}
&
\frac{3a}{5}+\frac{3c}{5}-ac-\frac35 =
\frac{3(a-1)}{5}+\frac{3c}{5}-ac \equiv 0 \bmod{2}, 
& 
&
\frac{3b}{5}+\frac{3d}{5}-bd-\frac35 =
\frac{3b}{5}+\frac{3(d-1)}{5}-bd \equiv 0 \bmod{2},  \\
&
\frac{3a}{5}+\frac{7c}{5}-ac-\frac35 =
\frac{3(a-1)}{5}+\frac{7c}{5}-ac \equiv 0 \bmod{2}, 
& 
&
\frac{3b}{5}+\frac{7d}{5}-bd-\frac35 =
\frac{3b}{5}+\frac{7(d-1)}{5}-bd \equiv 0 \bmod{2}. 
\end{align*} 
By equation (\ref{eqn:char-even}) and 
Theorem \ref{thm:transformation-general}, 
we find that 
\begin{align*}
&
\theta 
\left[
\begin{array}{c}
\frac35 \vspace{1mm} \\
\frac35
\end{array}
\right] \left(0, M\tau  \right) 
\theta 
\left[
\begin{array}{c}
\frac35 \vspace{1mm} \\
\frac75
\end{array}
\right] \left(0, M\tau  \right) \\
=&
\nu_{\eta}^6(M)
\cdot
(c\tau+d) \cdot
\theta 
\left[
\begin{array}{c}
3a/5+3c/5-ac \vspace{1mm} \\
3b/5+3d/5-bd
\end{array}
\right] \left(0, \tau  \right) 
\theta 
\left[
\begin{array}{c}
3a/5+7c/5-ac \vspace{1mm} \\
3b/5+7d/5-bd
\end{array}
\right] \left(0, \tau  \right) \\
&
\times
\exp
(\pi i)
\left[
\frac{2a}{5}
(b+d-bd)
-
\frac{2}{25}
ab
-
\frac{2}{25}
cd
-\frac25
\right]  \\
=&
\nu_{\eta}^6(M)
\cdot
(c\tau+d) \cdot
\theta 
\left[
\begin{array}{c}
\frac35 \vspace{1mm} \\
\frac35
\end{array}
\right] \left(0, \tau  \right) 
\theta 
\left[
\begin{array}{c}
\frac35 \vspace{1mm} \\
\frac75
\end{array}
\right] \left(0, \tau  \right) \\
&
\times
\exp(\pi i)
\left[
\frac35 \cdot \frac12
\left(
\frac{3b}{5}
+\frac{3(d-1)}{5}
-bd
\right)
\right] 
\cdot
\exp(\pi i)
\left[
\frac35 \cdot \frac12
\left(
\frac{3b}{5}
+\frac{7(d-1)}{5}
-bd
\right)
\right]   \\
&
\times
\exp
(\pi i)
\left[
\frac{2a}{5}
(b+d-bd)
-
\frac{2}{25}
ab
-
\frac{2}{25}
cd
-\frac25
\right]  \\
=&
\nu_{\eta}^6(M)
\cdot
(c\tau+d) \cdot
\theta 
\left[
\begin{array}{c}
\frac35 \vspace{1mm} \\
\frac35
\end{array}
\right] \left(0, \tau  \right) 
\theta 
\left[
\begin{array}{c}
\frac35 \vspace{1mm} \\
\frac75
\end{array}
\right] \left(0, \tau  \right)
\cdot
\exp(\pi i) 
\left[
E
\right],
\end{align*}
where 
\begin{align*}
E=&
\frac35 \cdot \frac12
\left(
\frac{3b}{5}
+\frac{3(d-1)}{5}
-bd
\right) 
+
\frac35 \cdot \frac12
\left(
\frac{3b}{5}
+\frac{7(d-1)}{5}
-bd
\right)
+
\frac{2a}{5}
(b+d-bd)
-
\frac{2}{25}
ab
-
\frac{2}{25}
cd
-\frac25   \\
=&
\frac{1}{25}
(
9b+15d-15-15bd
)
+
\frac{2a}{5}
(b+d-bd)
-
\frac{2}{25}ab
-
\frac{2}{25}
cd
-
\frac25  \\
=&
\frac{2}{25}
\left\{
- 
\frac{15}{2} (b-1)(d-1)-3b
+5(a-1)
-
5a(b-1)(d-1)
-ab
-cd
\right\} \\
\equiv&
\frac{2}{25}
(
-3b-ab-cd
)
\bmod{2}   \\
=&
\frac15
\left(
-\frac{6b}{5}
-
\frac{2ab}{5}
-
\frac{2cd}{5}
\right), 
\end{align*}
which proves the theorem. 
\end{proof}

\subsection{The case where 
$
M
\equiv
\begin{pmatrix}
-1 & 0 \\
0 & -1
\end{pmatrix} 
\bmod{5}. 
$ 
}

\begin{theorem}
\label{thm:(3/5,3/5),(3/5,7/5)-minusI}
{\it
Set 
$
M
=
\begin{pmatrix}
a & b \\
c & d
\end{pmatrix}
\in
\Gamma_{\theta,5}
$ 
with 
$
M
\equiv
\begin{pmatrix}
-1 & 0 \\
0 & -1
\end{pmatrix} 
\bmod{5}. 
$ 
Then the multiplier system of 
$
\theta 
\left[
\begin{array}{c}
\frac35 \vspace{1mm} \\
\frac35
\end{array}
\right] \left(0, \tau  \right) 
\theta 
\left[
\begin{array}{c}
\frac35 \vspace{1mm} \\
\frac75
\end{array}
\right] \left(0, \tau  \right) 
$ 
is given by 
\begin{equation*}
\nu(M)
=
\nu_{\eta}^6(M)
\exp
\left[
\frac{\pi i}{5}
\left(
-
\frac{24b}{5}
-
\frac{2ab}{5}
-
\frac{2cd}{5}
\right)
\right]. 
\end{equation*}
}
\end{theorem}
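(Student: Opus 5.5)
We follow the pattern of Theorem \ref{thm:(1/5,1/5),(1/5,9/5)-minusI} (the $-I$ case for the level-one pair), using the characteristic data of Theorem \ref{thm:(3/5,3/5),(3/5,7/5)-I}. The plan has three steps: apply Theorem \ref{thm:transformation-general} to both factors, bring the new characteristics back to the original ones, and simplify the resulting phase modulo $2$.

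\emph{Step 1: transform both factors.} We apply Theorem \ref{thm:transformation-general} at $v=0$ to $\theta\left[\begin{array}{c}\frac35\\ \frac35\end{array}\right]$ and to $\theta\left[\begin{array}{c}\frac35\\ \frac75\end{array}\right]$ and multiply. This gives the factor $\nu_\eta^6(M)(c\tau+d)$ times the two theta constants with characteristics
$$\left[\begin{array}{c}3a/5+3c/5-ac\\ 3b/5+3d/5-bd\end{array}\right], \qquad \left[\begin{array}{c}3a/5+7c/5-ac\\ 3b/5+7d/5-bd\end{array}\right].$$
For the extra exponential we substitute $(1-\epsilon)/2=1/5$ in both factors, and $(1-\epsilon')/2=1/5$ or $-1/5$ respectively. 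Adding the two exponents, the $c$-linear cross terms cancel. Since $(b-1)(d-1)$ is even, the total exponent is, modulo $2$, the same expression
$$\frac{2a}{5}(b+d-bd)-\frac{2}{25}ab-\frac{2}{25}cd-\frac25$$
as in Theorem \ref{thm:(3/5,3/5),(3/5,7/5)-I}.

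\emph{Step 2: return to the original characteristics.} Since $M\equiv -I \bmod 5$ and $ad-bc=1$ forces the parity patterns $(a,c),(b,d)\not\equiv(0,0)\bmod 2$, we check that
$$\frac{3(a+1)}{5}+\frac{3c}{5}-ac,\quad \frac{3(a+1)}{5}+\frac{7c}{5}-ac,\quad \frac{3b}{5}+\frac{3(d+1)}{5}-bd,\quad \frac{3b}{5}+\frac{7(d+1)}{5}-bd$$
are all even integers. Hence the new characteristics are $\left[\begin{array}{c}-3/5\\ -3/5\end{array}\right]$ and $\left[\begin{array}{c}-3/5\\ -7/5\end{array}\right]$ shifted by even integers. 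Equation (\ref{eqn:char-even}) with $\epsilon=-3/5$ removes these shifts, at the cost of the factors
$$\exp(\pi i)\left[-\tfrac35\cdot\tfrac12\left(\tfrac{3b}{5}+\tfrac{3(d+1)}{5}-bd\right)\right], \qquad \exp(\pi i)\left[-\tfrac35\cdot\tfrac12\left(\tfrac{3b}{5}+\tfrac{7(d+1)}{5}-bd\right)\right].$$
Then (\ref{eqn:character-minus}) at $v=0$ turns $\theta\left[\begin{array}{c}-\epsilon\\ -\epsilon'\end{array}\right]$ back into $\theta\left[\begin{array}{c}\epsilon\\ \epsilon'\end{array}\right]$ with no extra factor.

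\emph{Step 3: reduce the phase.} Collecting the pieces, the total phase is $\exp(\pi i)[E]$ with
$$E=-\frac{9b}{25}-\frac{3(d+1)}{5}+\frac{3bd}{5}+\frac{2a}{5}(b+d-bd)-\frac{2ab}{25}-\frac{2cd}{25}-\frac25 .$$
The main work is to show
$$E\equiv \frac{2}{25}(-12b-ab-cd)=\frac15\left(-\frac{24b}{5}-\frac{2ab}{5}-\frac{2cd}{5}\right) \pmod 2 .$$
To do this, we write $E$ as $\frac{2}{25}\{\cdots\}$ and isolate the parts that are visibly in $2\mathbb{Z}$. These parts are multiples of $(a+1)$, $(d+1)$, $b$, $(b-1)(d+1)$ and $a(b-1)(d+1)$, paired with the right powers of $5$. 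The relevant facts are $5\mid a+1$, $5\mid d+1$, $5\mid b$, $5\mid c$, together with the parity constraint that $(b-1)(d+1)$ is even. This mirrors the regrouping used in Theorem \ref{thm:(1/5,1/5),(1/5,9/5)-minusI}, which produced the extra $+20ab$ term there.

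The main obstacle is finding this regrouping, in particular making the coefficient of $b$ come out as $-12$ (that is, $-24b/5$ in the final form). The first attempt will be to set $a=-1+5\alpha$, $d=-1+5\delta$, $b=5\beta$, $c=5\gamma$. We then expand $E-\frac{2}{25}(-12b-ab-cd)$ as a polynomial in $\alpha,\beta,\gamma,\delta$ and check that it is even. This uses $ad-bc=1$, in the form $-\alpha-\delta+5\alpha\delta-5\beta\gamma=0$, to eliminate the leftover mixed terms.
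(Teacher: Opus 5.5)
Your proposal is correct and follows the paper's proof step for step. It uses the same application of Theorem \ref{thm:transformation-general} with the $c$-terms cancelling, the same even-shift factors from (\ref{eqn:char-even}) with $\epsilon=-3/5$ followed by (\ref{eqn:character-minus}), and the same $E$. The paper regroups $E$ as $\frac{2}{25}\{\frac{15}{2}(b-1)(d+1)-12b-5(a+1)-5a(b-1)(d+1)+10ab-ab-cd\}$, exactly the ingredients you list. Your fallback substitution also closes: $E-\frac{2}{25}(-12b-ab-cd)$ reduces modulo $2$ to $\delta(1+\beta)$, which is even because $b$ and $d$ are not both even.
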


\begin{proof}
Since $ad-bc=1,$ 
it follows that 
\begin{equation*}
(a,c)\equiv (1,1),(1,0),(0,1) \bmod{2} \, \mathrm{and} \,
(b,d)\equiv (1,1),(1,0),(0,1) \bmod{2},
\end{equation*}
which implies that 
\begin{align*}
&
\frac{3a}{5}+\frac{3c}{5}-ac-\left(-\frac35\right) =
\frac{3(a+1)}{5}+\frac{3c}{5}-ac \equiv 0 \bmod{2}, 
& 
&
\frac{3b}{5}+\frac{3d}{5}-bd-\left(-\frac35 \right) =
\frac{3b}{5}+\frac{3(d+1)}{5}-bd \\
&
&
&\hspace{6cm} \equiv 0 \bmod{2},  \\
&
\frac{3a}{5}+\frac{7c}{5}-ac-\left(-\frac35 \right) =
\frac{3(a+1)}{5}+\frac{7c}{5}-ac \equiv 0 \bmod{2}, 
& 
&
\frac{3b}{5}+\frac{7d}{5}-bd-\left(-\frac75 \right) =
\frac{3b}{5}+\frac{7(d+1)}{5}-bd \\ 
&
&
&\hspace{6cm} \equiv 0 \bmod{2}. 
\end{align*} 
By equations (\ref{eqn:char-even}), (\ref{eqn:character-minus}) and 
Theorem \ref{thm:transformation-general}, 
we find that 
\begin{align*}
&
\theta 
\left[
\begin{array}{c}
\frac35 \vspace{1mm} \\
\frac35
\end{array}
\right] \left(0, M\tau  \right) 
\theta 
\left[
\begin{array}{c}
\frac35 \vspace{1mm} \\
\frac75
\end{array}
\right] \left(0, M\tau  \right) \\
=&
\nu_{\eta}^6(M)
\cdot
(c\tau+d) \cdot
\theta 
\left[
\begin{array}{c}
3a/5+3c/5-ac \vspace{1mm} \\
3b/5+3d/5-bd
\end{array}
\right] \left(0, \tau  \right) 
\theta 
\left[
\begin{array}{c}
3a/5+7c/5-ac \vspace{1mm} \\
3b/5+7d/5-bd
\end{array}
\right] \left(0, \tau  \right) \\
&
\times
\exp
(\pi i)
\left[
\frac{2a}{5}
(b+d-bd)
-
\frac{2}{25}
ab
-
\frac{2}{25}
cd
-\frac25
\right]  \\
=&
\nu_{\eta}^6(M)
\cdot
(c\tau+d) \cdot
\theta 
\left[
\begin{array}{c}
\frac35 \vspace{1mm} \\
\frac35
\end{array}
\right] \left(0, \tau  \right) 
\theta 
\left[
\begin{array}{c}
\frac35 \vspace{1mm} \\
\frac75
\end{array}
\right] \left(0, \tau  \right) \\
&
\times
\exp(\pi i)
\left[
-
\frac35 \cdot \frac12
\left(
\frac{3b}{5}
+\frac{3(d+1)}{5}
-bd
\right)
\right] 
\cdot
\exp(\pi i)
\left[
-
\frac35 \cdot \frac12
\left(
\frac{3b}{5}
+\frac{7(d+1)}{5}
-bd
\right)
\right]   \\
&
\times
\exp
(\pi i)
\left[
\frac{2a}{5}
(b+d-bd)
-
\frac{2}{25}
ab
-
\frac{2}{25}
cd
-\frac25
\right]  \\
=&
\nu_{\eta}^6(M)
\cdot
(c\tau+d) \cdot
\theta 
\left[
\begin{array}{c}
\frac35 \vspace{1mm} \\
\frac35
\end{array}
\right] \left(0, \tau  \right) 
\theta 
\left[
\begin{array}{c}
\frac35 \vspace{1mm} \\
\frac75
\end{array}
\right] \left(0, \tau  \right)
\cdot
\exp(\pi i) 
\left[
E
\right],
\end{align*}
where 
\begin{align*}
E=&
-
\frac35 \cdot \frac12
\left(
\frac{3b}{5}
+\frac{3(d+1)}{5}
-bd
\right) 
-
\frac35 \cdot \frac12
\left(
\frac{3b}{5}
+\frac{7(d+1)}{5}
-bd
\right)
+
\frac{2a}{5}
(b+d-bd)
-
\frac{2}{25}
ab
-
\frac{2}{25}
cd
-\frac25   \\
=&
\frac{1}{25}
(
-9b-15d-15+15bd
)
+
\frac{2a}{5}
(b+d-bd)
-
\frac{2}{25}ab
-
\frac{2}{25}
cd
-
\frac25  \\
=&
\frac{2}{25}
\left\{
\frac{15}{2} (b-1)(d+1)-12b
-5(a+1)
-
5a(b-1)(d+1)
+10ab
-ab
-cd
\right\} \\
\equiv&
\frac{2}{25}
(
-12b-ab-cd
)
\bmod{2}   \\
=&
\frac15
\left(
-\frac{24b}{5}
-
\frac{2ab}{5}
-
\frac{2cd}{5}
\right), 
\end{align*}
which proves the theorem. 
\end{proof}

\subsection{The case where 
$
M
\equiv
\begin{pmatrix}
0 & -1 \\
1 & 0
\end{pmatrix} 
\bmod{5}. 
$ 
}

\begin{theorem}
\label{thm:(3/5,3/5),(3/5,7/5)-T}
{\it
Set 
$
M
=
\begin{pmatrix}
a & b \\
c & d
\end{pmatrix}
\in
\Gamma_{\theta,5}
$ 
with 
$
M
\equiv
\begin{pmatrix}
0 & -1 \\
1 & 0
\end{pmatrix} 
\bmod{5}. 
$ 
Then the multiplier system of 
$
\theta 
\left[
\begin{array}{c}
\frac35 \vspace{1mm} \\
\frac35
\end{array}
\right] \left(0, \tau  \right) 
\theta 
\left[
\begin{array}{c}
\frac35 \vspace{1mm} \\
\frac75
\end{array}
\right] \left(0, \tau  \right) 
$ 
is given by 
\begin{equation*}
\nu(M)
=
\nu_{\eta}^6(M)
\exp
\left[
\frac{\pi i}{5}
\left(
-
5
-
\frac{6d}{5}
-
\frac{2ab}{5}
-
\frac{2cd}{5}
\right)
\right]. 
\end{equation*}
}
\end{theorem}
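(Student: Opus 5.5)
The plan is to follow the template of Theorems \ref{thm:(1/5,1/5),(1/5,9/5)-T} and \ref{thm:(3/5,3/5),(3/5,7/5)-I}. I apply Theorem \ref{thm:transformation-general} with $v=0$ to each factor $\theta\left[\begin{array}{c}\frac35\\ \frac35\end{array}\right]$ and $\theta\left[\begin{array}{c}\frac35\\ \frac75\end{array}\right]$ and multiply. This produces
\begin{equation*}
\nu_\eta^6(M)\,(c\tau+d)\,
\theta\left[\begin{array}{c}3a/5+3c/5-ac\\ 3b/5+3d/5-bd\end{array}\right]
\theta\left[\begin{array}{c}3a/5+7c/5-ac\\ 3b/5+7d/5-bd\end{array}\right].
\end{equation*}
The exponential factor is the one already computed in the proof of Theorem \ref{thm:(3/5,3/5),(3/5,7/5)-I}, namely
\begin{equation*}
\exp(\pi i)\left[\frac{2a}{5}(b+d-bd)-\frac{2}{25}ab-\frac{2}{25}cd-\frac25\right].
\end{equation*}
That computation did not use any congruence modulo $5$, so I can reuse it verbatim.

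Next I reduce the new characteristics, using $a\equiv d\equiv 0$, $b\equiv -1$, $c\equiv 1 \bmod 5$. The first characteristic becomes $\left[\begin{array}{c}3/5\\ -3/5\end{array}\right]$ up to even integers. Indeed, $\frac{3a}{5}+\frac{3(c-1)}{5}-ac\equiv 0 \bmod 2$ by the same parity argument as in the earlier proofs: the possible parities $(a,c)\equiv(1,1),(1,0),(0,1)$ make it an even integer. Similarly $\frac{3(b+1)}{5}+\frac{3d}{5}-bd\equiv 0 \bmod 2$.

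The second characteristic, with top entry $3a/5+7c/5-ac$, reduces to $7/5$, and the bottom entry $3b/5+7d/5-bd$ reduces to $-3/5$. I therefore write the second characteristic as $\left[\begin{array}{c}7/5\\ -3/5\end{array}\right]$ plus even integers, and then rewrite it as $\left[\begin{array}{c}-3/5\\ -3/5\end{array}\right]+\left[\begin{array}{c}2\\ 0\end{array}\right]$. Applying (\ref{eqn:character-minus}) at $v=0$ turns $\theta\left[\begin{array}{c}-3/5\\ -3/5\end{array}\right]$ into $\theta\left[\begin{array}{c}3/5\\ 3/5\end{array}\right]$. The first factor, $\left[\begin{array}{c}3/5\\ -3/5\end{array}\right]$, I rewrite as $\left[\begin{array}{c}3/5\\ 7/5\end{array}\right]-\left[\begin{array}{c}0\\ 2\end{array}\right]$. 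So the two factors swap roles, exactly as happened in Theorem \ref{thm:(1/5,1/5),(1/5,9/5)-T}.

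Each shift by $2m,2n$ contributes a factor $\exp(\pi i\,\epsilon n)$ by (\ref{eqn:char-even}). Here $n$ is half of the even integer discrepancy in the bottom entry, and $\epsilon$ is the top entry of the target characteristic. I collect these into $E$. The $\frac{3}{5}\cdot\frac12(\cdots)$ terms come from the bottom-entry discrepancies, with a minus sign where the sign flip was used.

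The main obstacle is the final simplification of $E$ modulo $2$ to $\frac15\left(-5-\frac{6d}{5}-\frac{2ab}{5}-\frac{2cd}{5}\right)$. Here I will use $5\mid a$, $5\mid d$ and $b\equiv -1$, $c\equiv 1 \bmod 5$. I will also use parity facts such as $(b-1)(d-1)\equiv 0$ and $a(b+d-bd)\equiv a$-type reductions. I expect the $b$-terms to cancel, leaving a $d$-term.

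I would first write $E$ over the denominator $25$ and isolate terms of the form $\frac{2}{25}\cdot 25\cdot(\text{integer})$, which vanish modulo $2$. The constant $-1$ should arise from $-\frac25$ combined with the $\frac{2a}{5}$-term and the shift constants such as $\frac{3}{5}\cdot\frac{-9}{5}$-type pieces. If the constant or the coefficient of $d$ comes out off by a multiple of $10$, I would check it on an explicit element such as $M=\begin{pmatrix}0&-1\\1&0\end{pmatrix}$, using $\nu_\eta$ from (\ref{eqn-eta-multiplier}), to fix the sign conventions.
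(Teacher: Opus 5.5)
Your plan is the paper's proof. You apply Theorem~\ref{thm:transformation-general} to both factors and reuse the exponential factor from the $M\equiv I$ case; you are right that it used only $ad-bc=1$. You then reduce the first new characteristic to $\left[\begin{array}{c}3/5\\ 7/5\end{array}\right]$ and the second to $\left[\begin{array}{c}-3/5\\ -3/5\end{array}\right]$, and flip the sign. All the congruences you state are correct. Your two-stage shifts give the same phases as the paper's one-step shifts. For the second factor, (\ref{eqn:char-even}) applied via $\left[\begin{array}{c}7/5\\ -3/5\end{array}\right]$ gives $\exp(\pi i\cdot\frac75 n)$, and this equals $\exp(-\pi i\cdot\frac35 n)$ because $n\in\mathbb{Z}$.

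The only thing missing is the step you call the main obstacle. You never actually reduce $E$; you describe what you expect and offer a fallback check on one matrix. Such a check cannot establish the formula for all $M$, and a discrepancy by a multiple of $10$ inside $\frac{\pi i}{5}(\cdot)$ would be invisible anyway. The reduction is immediate. With your phases,
\[
E=\frac{3}{10}\left(\frac{3b-7}{5}+\frac{3d}{5}-bd\right)-\frac{3}{10}\left(\frac{3(b+1)}{5}+\frac{7d}{5}-bd\right)+\frac{2a}{5}(b+d-bd)-\frac{2ab}{25}-\frac{2cd}{25}-\frac25 .
\]
The first two terms combine to $\frac{3}{10}\left(-2-\frac{4d}{5}\right)=-\frac35-\frac{6d}{25}$; the $b$- and $bd$-terms cancel, as you guessed. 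The term $\frac{2a}{5}(b+d-bd)=2\cdot\frac{a}{5}(b+d-bd)$ is an even integer because $5\mid a$. Finally, $-\frac35-\frac25=-1$. Hence $E\equiv -1-\frac{6d}{25}-\frac{2ab}{25}-\frac{2cd}{25}\pmod 2$, which is $\frac15\left(-5-\frac{6d}{5}-\frac{2ab}{5}-\frac{2cd}{5}\right)$, the claimed exponent.

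So the parity facts you list are not needed at this stage, and the $\frac{2a}{5}$-term contributes nothing modulo $2$. There are also no $\frac35\cdot\frac{-9}{5}$-type pieces; those belong to the $\frac15$ family. The constant comes only from $\frac{3}{10}\left(-\frac75-\frac35\right)=-\frac35$ together with $-\frac25$.
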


\begin{proof}
Since $ad-bc=1,$ 
it follows that 
\begin{equation*}
(a,c)\equiv (1,1),(1,0),(0,1) \bmod{2} \, \mathrm{and} \,
(b,d)\equiv (1,1),(1,0),(0,1) \bmod{2},
\end{equation*}
which implies that 
\begin{align*}
&
\frac{3a}{5}+\frac{3c}{5}-ac-\frac35 =
\frac{3a}{5}+\frac{3(c-1)}{5}-ac \equiv 0 \bmod{2}, 
& 
&
\frac{3b}{5}+\frac{3d}{5}-bd-\frac75 =
\frac{3b-7}{5}+\frac{3d}{5}-bd \equiv 0 \bmod{2},  \\
&
\frac{3a}{5}+\frac{7c}{5}-ac-\left(-\frac35 \right) =
\frac{3a}{5}+\frac{7c+3}{5}-ac \equiv 0 \bmod{2}, 
& 
&
\frac{3b}{5}+\frac{7d}{5}-bd-\left(-\frac35 \right)=
\frac{3(b+1)}{5}+\frac{7d}{5}-bd  \\
&
&
&\hspace{60mm}
\equiv 0 \bmod{2}. 
\end{align*} 
By equations (\ref{eqn:char-even}), (\ref{eqn:character-minus}) and 
Theorem \ref{thm:transformation-general}, 
we find that 
\begin{align*}
&
\theta 
\left[
\begin{array}{c}
\frac35 \vspace{1mm} \\
\frac35
\end{array}
\right] \left(0, M\tau  \right) 
\theta 
\left[
\begin{array}{c}
\frac35 \vspace{1mm} \\
\frac75
\end{array}
\right] \left(0, M\tau  \right) \\
=&
\nu_{\eta}^6(M)
\cdot
(c\tau+d) \cdot
\theta 
\left[
\begin{array}{c}
3a/5+3c/5-ac \vspace{1mm} \\
3b/5+3d/5-bd
\end{array}
\right] \left(0, \tau  \right) 
\theta 
\left[
\begin{array}{c}
3a/5+7c/5-ac \vspace{1mm} \\
3b/5+7d/5-bd
\end{array}
\right] \left(0, \tau  \right) \\
&
\times
\exp
(\pi i)
\left[
\frac{2a}{5}
(b+d-bd)
-
\frac{2}{25}
ab
-
\frac{2}{25}
cd
-\frac25
\right]  \\
=&
\nu_{\eta}^6(M)
\cdot
(c\tau+d) \cdot
\theta 
\left[
\begin{array}{c}
\frac35 \vspace{1mm} \\
\frac35
\end{array}
\right] \left(0, \tau  \right) 
\theta 
\left[
\begin{array}{c}
\frac35 \vspace{1mm} \\
\frac75
\end{array}
\right] \left(0, \tau  \right) \\
&
\times
\exp(\pi i)
\left[
\frac35 \cdot \frac12
\left(
\frac{3b-7}{5}
+\frac{3d}{5}
-bd
\right)
\right] 
\cdot
\exp(\pi i)
\left[
-
\frac35 \cdot \frac12
\left(
\frac{3(b+1)}{5}
+\frac{7d}{5}
-bd
\right)
\right]   \\
&
\times
\exp
(\pi i)
\left[
\frac{2a}{5}
(b+d-bd)
-
\frac{2}{25}
ab
-
\frac{2}{25}
cd
-\frac25
\right]  \\
=&
\nu_{\eta}^6(M)
\cdot
(c\tau+d) \cdot
\theta 
\left[
\begin{array}{c}
\frac35 \vspace{1mm} \\
\frac35
\end{array}
\right] \left(0, \tau  \right) 
\theta 
\left[
\begin{array}{c}
\frac35 \vspace{1mm} \\
\frac75
\end{array}
\right] \left(0, \tau  \right)
\cdot
\exp(\pi i) 
\left[
E
\right],
\end{align*}
where 
\begin{align*}
E=&
\frac35 \cdot \frac12
\left(
\frac{3b-7}{5}
+\frac{3d}{5}
-bd
\right) 
-
\frac35 \cdot \frac12
\left(
\frac{3(b+1)}{5}
+\frac{7d}{5}
-bd
\right)
+
\frac{2a}{5}
(b+d-bd)
-
\frac{2}{25}
ab
-
\frac{2}{25}
cd
-\frac25   \\
\equiv&
-
1
-
\frac{6d}{25}
-
\frac{2ab}{25}
-
\frac{2cd}{25} \bmod{2}  \\
=&
\frac15
\left[
-5
-
\frac{6d}{5}
-
\frac{2ab}{5}
-
\frac{2cd}{5}
\right], 
\end{align*}
which proves the theorem. 
\end{proof}

\subsection{The case where 
$
M
\equiv
\begin{pmatrix}
0 & 1 \\
-1 & 0
\end{pmatrix} 
\bmod{5}. 
$ 
}

\begin{theorem}
\label{thm:(3/5,3/5),(3/5,7/5)-minusT}
{\it
Set 
$
M
=
\begin{pmatrix}
a & b \\
c & d
\end{pmatrix}
\in
\Gamma_{\theta,5}
$ 
with 
$
M
\equiv
\begin{pmatrix}
0 & 1 \\
-1 & 0
\end{pmatrix} 
\bmod{5}. 
$ 
Then the multiplier system of 
$
\theta 
\left[
\begin{array}{c}
\frac35 \vspace{1mm} \\
\frac35
\end{array}
\right] \left(0, \tau  \right) 
\theta 
\left[
\begin{array}{c}
\frac35 \vspace{1mm} \\
\frac75
\end{array}
\right] \left(0, \tau  \right) 
$ 
is given by 
\begin{equation*}
\nu(M)
=
\nu_{\eta}^6(M)
\exp
\left[
\frac{\pi i}{5}
\left(
-
5
+
\frac{6d}{5}
-
\frac{2ab}{5}
-
\frac{2cd}{5}
\right)
\right]. 
\end{equation*}
}
\end{theorem}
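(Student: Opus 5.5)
The plan is to follow the proof of Theorem \ref{thm:(3/5,3/5),(3/5,7/5)-T} line by line, changing only the congruences $a\equiv 0$, $b\equiv 1$, $c\equiv -1$, $d\equiv 0 \bmod{5}$. First apply Theorem \ref{thm:transformation-general} with $v=0$ to each factor, with $(\epsilon,\epsilon^{\prime})=(3/5,3/5)$ and $(3/5,7/5)$. For both characteristics $(1-\epsilon)/2=1/5$, and $(1-\epsilon^{\prime})/2$ equals $1/5$ and $-1/5$ respectively. So the product of the two exponential factors in Theorem \ref{thm:transformation-general} is, exactly as in the preceding theorems,
\begin{equation*}
\exp(\pi i)\left[\frac{2a}{5}(b+d-bd)-\frac{2}{25}ab-\frac{2}{25}cd-\frac25\right],
\end{equation*}
together with $\nu_{\eta}^6(M)(c\tau+d)$. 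This part needs no case analysis.

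Next, reduce the new characteristics. Using the parity facts $(a,c),(b,d)\not\equiv(0,0) \bmod 2$, I would check four congruences, in the spirit of the earlier proofs:
\begin{align*}
&\frac{3a}{5}+\frac{3c}{5}-ac-\left(-\frac35\right)=\frac{3a}{5}+\frac{3(c+1)}{5}-ac\equiv 0 \bmod{2},\\
&\frac{3b}{5}+\frac{3d}{5}-bd-\left(-\frac75\right)=\frac{3b+7}{5}+\frac{3d}{5}-bd\equiv 0 \bmod{2},\\
&\frac{3a}{5}+\frac{7c}{5}-ac-\frac35=\frac{3a}{5}+\frac{7c-3}{5}-ac\equiv 0 \bmod{2},\\
&\frac{3b}{5}+\frac{7d}{5}-bd-\frac35=\frac{3(b-1)}{5}+\frac{7d}{5}-bd\equiv 0 \bmod{2}.
\end{align*}
Each left-hand side is an integer because of the mod $5$ congruences. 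Its parity reduces to $1-(1-a)(1-c)+1$ or the analogous expression in $b,d$, which is even. By (\ref{eqn:char-even}) the first factor becomes $\theta[-3/5;-7/5]$, which equals $\theta[3/5;7/5]$ at $v=0$ by (\ref{eqn:character-minus}). The second factor becomes $\theta[3/5;3/5]$. So the product is reproduced with the two factors interchanged.

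The reductions via (\ref{eqn:char-even}) contribute extra phases $\exp(\pi i \epsilon n)$, where $\epsilon$ is the reduced top entry and $2n$ is the even difference of the bottom entries. These are
\begin{equation*}
\exp(\pi i)\left[-\frac35\cdot\frac12\left(\frac{3b+7}{5}+\frac{3d}{5}-bd\right)\right]
\quad\text{and}\quad
\exp(\pi i)\left[\frac35\cdot\frac12\left(\frac{3(b-1)}{5}+\frac{7d}{5}-bd\right)\right].
\end{equation*}
This is the mirror image of the previous theorem, with the signs of the $b$-shifts reversed.

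The main obstacle is the final simplification of the total exponent $E \bmod 2$. I would collect all terms over $25$. Then I would use $a\equiv d\equiv 0 \bmod 5$ and the parity constraints to discard $\frac{2a}{5}(b+d-bd)$ together with the integer pieces coming from $bd$ and $(b\mp1)$. This should leave $E\equiv -1+\frac{6d}{25}-\frac{2ab}{25}-\frac{2cd}{25} \bmod 2$, which is $\frac15\left[-5+\frac{6d}{5}-\frac{2ab}{5}-\frac{2cd}{5}\right]$. As a sign check, this should be the previous case's formula with $d\mapsto -d$ in the linear term, matching how the $(1/5,9/5)$ pair behaved.
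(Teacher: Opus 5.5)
Your proposal is correct and follows the paper's proof step for step. It uses the same four parity congruences, the same two phase factors from the shift formula for characteristics, the interchange of the two factors via the sign-change formula, and the same reduction of $E$ modulo $2$. One sharpening: in the last step the $b$- and $bd$-terms of the two phases cancel exactly, since their sum is $-\frac35+\frac{6d}{25}$, which combines with $-\frac25$ to give $-1$. So no parity argument is needed there, and only $a\equiv 0 \bmod 5$ is used to discard $\frac{2a}{5}(b+d-bd)$.
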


\begin{proof}
Since $ad-bc=1,$ 
it follows that 
\begin{equation*}
(a,c)\equiv (1,1),(1,0),(0,1) \bmod{2} \, \mathrm{and} \,
(b,d)\equiv (1,1),(1,0),(0,1) \bmod{2},
\end{equation*}
which implies that 
\begin{align*}
&
\frac{3a}{5}+\frac{3c}{5}-ac-\left(-\frac35\right) =
\frac{3a}{5}+\frac{3(c+1)}{5}-ac \equiv 0 \bmod{2}, 
& 
&
\frac{3b}{5}+\frac{3d}{5}-bd-\left(-\frac75 \right) =
\frac{3b+7}{5}+\frac{3d}{5}-bd \\
&
&
&\hspace{60mm} \equiv 0 \bmod{2},  \\
&
\frac{3a}{5}+\frac{7c}{5}-ac-\frac35  =
\frac{3a}{5}+\frac{7c-3}{5}-ac \equiv 0 \bmod{2}, 
& 
&
\frac{3b}{5}+\frac{7d}{5}-bd-\frac35 =
\frac{3(b-1)}{5}+\frac{7d}{5}-bd  \\
&
&
&\hspace{60mm}
\equiv 0 \bmod{2}. 
\end{align*} 
By equations (\ref{eqn:char-even}), (\ref{eqn:character-minus}) and 
Theorem \ref{thm:transformation-general}, 
we find that 
\begin{align*}
&
\theta 
\left[
\begin{array}{c}
\frac35 \vspace{1mm} \\
\frac35
\end{array}
\right] \left(0, M\tau  \right) 
\theta 
\left[
\begin{array}{c}
\frac35 \vspace{1mm} \\
\frac75
\end{array}
\right] \left(0, M\tau  \right) \\
=&
\nu_{\eta}^6(M)
\cdot
(c\tau+d) \cdot
\theta 
\left[
\begin{array}{c}
3a/5+3c/5-ac \vspace{1mm} \\
3b/5+3d/5-bd
\end{array}
\right] \left(0, \tau  \right) 
\theta 
\left[
\begin{array}{c}
3a/5+7c/5-ac \vspace{1mm} \\
3b/5+7d/5-bd
\end{array}
\right] \left(0, \tau  \right) \\
&
\times
\exp
(\pi i)
\left[
\frac{2a}{5}
(b+d-bd)
-
\frac{2}{25}
ab
-
\frac{2}{25}
cd
-\frac25
\right]  \\
=&
\nu_{\eta}^6(M)
\cdot
(c\tau+d) \cdot
\theta 
\left[
\begin{array}{c}
\frac35 \vspace{1mm} \\
\frac35
\end{array}
\right] \left(0, \tau  \right) 
\theta 
\left[
\begin{array}{c}
\frac35 \vspace{1mm} \\
\frac75
\end{array}
\right] \left(0, \tau  \right) \\
&
\times
\exp(\pi i)
\left[
-
\frac35 \cdot \frac12
\left(
\frac{3b+7}{5}
+\frac{3d}{5}
-bd
\right)
\right] 
\cdot
\exp(\pi i)
\left[
\frac35 \cdot \frac12
\left(
\frac{3(b-1)}{5}
+\frac{7d}{5}
-bd
\right)
\right]   \\
&
\times
\exp
(\pi i)
\left[
\frac{2a}{5}
(b+d-bd)
-
\frac{2}{25}
ab
-
\frac{2}{25}
cd
-\frac25
\right]  \\
=&
\nu_{\eta}^6(M)
\cdot
(c\tau+d) \cdot
\theta 
\left[
\begin{array}{c}
\frac35 \vspace{1mm} \\
\frac35
\end{array}
\right] \left(0, \tau  \right) 
\theta 
\left[
\begin{array}{c}
\frac35 \vspace{1mm} \\
\frac75
\end{array}
\right] \left(0, \tau  \right)
\cdot
\exp(\pi i) 
\left[
E
\right],
\end{align*}
where 
\begin{align*}
E=&
-
\frac35 \cdot \frac12
\left(
\frac{3b+7}{5}
+\frac{3d}{5}
-bd
\right) 
+
\frac35 \cdot \frac12
\left(
\frac{3(b-1)}{5}
+\frac{7d}{5}
-bd
\right)
+
\frac{2a}{5}
(b+d-bd)
-
\frac{2}{25}
ab
-
\frac{2}{25}
cd
-\frac25   \\
\equiv&
-
1
+
\frac{6d}{25}
-
\frac{2ab}{25}
-
\frac{2cd}{25} \bmod{2}  \\
=&
\frac15
\left[
-5
+
\frac{6d}{5}
-
\frac{2ab}{5}
-
\frac{2cd}{5}
\right], 
\end{align*}
which proves the theorem. 
\end{proof}

\subsection{Summary}

\begin{theorem}
\label{thm:(3/5,3/5),(3/5,7/5)-summary}
{\it
For each 
$
M
=
\begin{pmatrix}
a & b \\
c & d
\end{pmatrix}
\in
\Gamma_{\theta,5}, 
$ 
the multiplier system of 
$$
\theta 
\left[
\begin{array}{c}
\frac35 \vspace{1mm} \\
\frac35
\end{array}
\right] \left(0, \tau  \right) 
\theta 
\left[
\begin{array}{c}
\frac35 \vspace{1mm} \\
\frac75
\end{array}
\right] \left(0, \tau  \right) 
$$ 
is given by 
\begin{equation*}
\nu(M)
=
\nu_{\eta}^6(M)\cdot
\exp\left( \frac{\pi i}{5} E \right), 
\end{equation*}
where $E$ is defined by 
\begin{equation*}
E=
\begin{cases}
\displaystyle
-\frac{6b}{5}-\frac{2ab}{5}-\frac{2cd}{5} 
&
\text{
if 
$
M
\equiv 
\begin{pmatrix}
1 & 0 \\
0 & 1
\end{pmatrix}
\bmod{5}
$
} \\ 
\displaystyle
-\frac{24b}{5}-\frac{2ab}{5}-\frac{2cd}{5} 
&
\text{
if 
$
M
\equiv 
\begin{pmatrix}
-1 & 0 \\
0 & -1
\end{pmatrix}
\bmod{5}
$
} \\ 
\displaystyle
-5
-\frac{6d}{5}-\frac{2ab}{5}-\frac{2cd}{5} 
&
\text{
if 
$
M
\equiv 
\begin{pmatrix}
0 & -1 \\
1 & 0
\end{pmatrix}
\bmod{5}
$
} \\ 
\displaystyle
-5
+\frac{6d}{5}-\frac{2ab}{5}-\frac{2cd}{5} 
&
\text{
if 
$
M
\equiv 
\begin{pmatrix}
0 & 1 \\
-1 & 0
\end{pmatrix}
\bmod{5}. 
$
} 
\end{cases}
\end{equation*}
}
\end{theorem}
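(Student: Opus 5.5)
The statement is a direct assembly of the four preceding case computations, so the plan is simply to invoke them according to the residue of $M$ modulo $5$. By the definition of $\Gamma_{\theta,5}$, every $M\in\Gamma_{\theta,5}$ satisfies exactly one of
\[
M\equiv \begin{pmatrix}1&0\\0&1\end{pmatrix},\quad -\begin{pmatrix}1&0\\0&1\end{pmatrix},\quad \begin{pmatrix}0&-1\\1&0\end{pmatrix},\quad \begin{pmatrix}0&1\\-1&0\end{pmatrix} \pmod 5,
\]
and these four classes are pairwise distinct modulo $5$. So the summary splits into four disjoint cases, with no overlap to reconcile.

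In each case I would quote the corresponding earlier theorem, exactly as in the summary for the pair $\left[\begin{smallmatrix}1/5\\1/5\end{smallmatrix}\right],\left[\begin{smallmatrix}1/5\\9/5\end{smallmatrix}\right]$:
\begin{itemize}
\item Theorem \ref{thm:(3/5,3/5),(3/5,7/5)-I} gives the exponent $-\frac{6b}{5}-\frac{2ab}{5}-\frac{2cd}{5}$ in the case $M\equiv I$.
\item Theorem \ref{thm:(3/5,3/5),(3/5,7/5)-minusI} gives $-\frac{24b}{5}-\frac{2ab}{5}-\frac{2cd}{5}$ in the case $M\equiv -I$.
\item Theorem \ref{thm:(3/5,3/5),(3/5,7/5)-T} gives $-5-\frac{6d}{5}-\frac{2ab}{5}-\frac{2cd}{5}$ in the case $M\equiv T$.
\item Theorem \ref{thm:(3/5,3/5),(3/5,7/5)-minusT} gives $-5+\frac{6d}{5}-\frac{2ab}{5}-\frac{2cd}{5}$ in the case $M\equiv -T$.
\end{itemize}
Each of these has the form $\nu_\eta^6(M)\exp(\frac{\pi i}{5}E)$ with the listed $E$, which is precisely the claimed piecewise formula.

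The only points needing a brief check are, first, that the case list is exhaustive for $\Gamma_{\theta,5}$, which is immediate from the definition; and second, that the exponents $E$ were only determined modulo $2$ after the factor $\frac15$, that is, modulo $10$ inside $\frac{\pi i}{5}E$. That indeterminacy is harmless, since $\exp(\frac{\pi i}{5}\cdot 10)=1$.

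There is no real obstacle. The substantive work, namely the characteristic reductions via (\ref{eqn:char-even}) and (\ref{eqn:character-minus}) and the parity bookkeeping for $E$, was already carried out in the four case theorems.
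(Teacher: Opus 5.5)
Your proposal is correct and is exactly the paper's argument: the summary follows by citing Theorems \ref{thm:(3/5,3/5),(3/5,7/5)-I}, \ref{thm:(3/5,3/5),(3/5,7/5)-minusI}, \ref{thm:(3/5,3/5),(3/5,7/5)-T} and \ref{thm:(3/5,3/5),(3/5,7/5)-minusT}, chosen according to the residue of $M$ modulo $5$. Your two side checks are accurate and harmless: the four residue classes exhaust $\Gamma_{\theta,5}$, and $E$ is only determined modulo $10$, which does not affect $\exp(\frac{\pi i}{5}E)$.
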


\begin{proof}
The theorem follows from 
Theorems 
\ref{thm:(3/5,3/5),(3/5,7/5)-I},  
\ref{thm:(3/5,3/5),(3/5,7/5)-minusI},  
\ref{thm:(3/5,3/5),(3/5,7/5)-T} and 
\ref{thm:(3/5,3/5),(3/5,7/5)-minusT}. 
\end{proof}

\begin{theorem}
\label{thm:(3/5,3/5),(3/5,7/5)-weight-2}
{\it
Set 
$
M
=
\begin{pmatrix}
a & b \\
c & d
\end{pmatrix}
\in
\Gamma_{\theta,5}
$ 
and 
\begin{equation*}
G(\tau)
=
\frac
{
\eta^6(\tau)
}
{
\theta 
\left[
\begin{array}{c}
\frac35 \vspace{1mm} \\
\frac35
\end{array}
\right] \left(0, \tau  \right) 
\theta 
\left[
\begin{array}{c}
\frac35 \vspace{1mm} \\
\frac75
\end{array}
\right] \left(0, \tau  \right) 
}.
\end{equation*}
Then, 
the multiplier system of $G$ is given by 
\begin{equation*}
\nu_{G}
=
\exp
\left[
\frac{\pi i}{5}
g(M)
\right], 
\end{equation*}
where $g(M)$ is defined by 
\begin{equation*}
g(M)=
\begin{cases}
\displaystyle
\frac{6b}{5}+\frac{2ab}{5}+\frac{2cd}{5} 
&
\text{
if 
$
M
\equiv 
\begin{pmatrix}
1 & 0 \\
0 & 1
\end{pmatrix}
\bmod{5}
$
} \\ 
\displaystyle
\frac{24b}{5}+\frac{2ab}{5}+\frac{2cd}{5} 
&
\text{
if 
$
M
\equiv 
\begin{pmatrix}
-1 & 0 \\
0 & -1
\end{pmatrix}
\bmod{5}
$
} \\ 
\displaystyle
5
+\frac{6d}{5}+\frac{2ab}{5}+\frac{2cd}{5} 
&
\text{
if 
$
M
\equiv 
\begin{pmatrix}
0 & -1 \\
1 & 0
\end{pmatrix}
\bmod{5}
$
} \\ 
\displaystyle
5
-\frac{6d}{5}+\frac{2ab}{5}+\frac{2cd}{5} 
&
\text{
if 
$
M
\equiv 
\begin{pmatrix}
0 & 1 \\
-1 & 0
\end{pmatrix}
\bmod{5}. 
$
} 
\end{cases}
\end{equation*}
}
\end{theorem}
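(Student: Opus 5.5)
The proof is a direct quotient computation, using Theorem \ref{thm:(3/5,3/5),(3/5,7/5)-summary} for the denominator and the defining property of $\nu_{\eta}$ for the numerator. Fix $M\in\Gamma_{\theta,5}$ and write $\Theta(\tau)$ for the product $\theta\left[\begin{array}{c}\frac35\\ \frac35\end{array}\right](0,\tau)\,\theta\left[\begin{array}{c}\frac35\\ \frac75\end{array}\right](0,\tau)$.

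First, by the definition of $\nu_{\eta}$ recalled in Section \ref{sec:eta}, we have $\eta(M\tau)=\nu_{\eta}(M)(c\tau+d)^{1/2}\eta(\tau)$. Raising this to the sixth power gives
$$\eta^6(M\tau)=\nu_{\eta}^6(M)(c\tau+d)^3\eta^6(\tau),$$
which is unambiguous because $3$ is an integer. Second, Theorem \ref{thm:(3/5,3/5),(3/5,7/5)-summary}, combined with the displayed identities in the proofs of Theorems \ref{thm:(3/5,3/5),(3/5,7/5)-I}--\ref{thm:(3/5,3/5),(3/5,7/5)-minusT}, gives
$$\Theta(M\tau)=\nu_{\eta}^6(M)\,(c\tau+d)\,\exp\left(\frac{\pi i}{5}E\right)\Theta(\tau),$$
where $E$ is given case by case by the four residue classes of $M$ modulo $5$.

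Dividing the first identity by the second, the factors $\nu_{\eta}^6(M)$ cancel. This is exactly why $\eta^6$ was chosen as the numerator. What remains is
$$G(M\tau)=(c\tau+d)^2\exp\left(-\frac{\pi i}{5}E\right)G(\tau).$$
So $G$ transforms with weight $2$ and multiplier $\exp\left(-\frac{\pi i}{5}E\right)$. Setting $g(M)=-E$ in each of the four cases reproduces the four lines of the statement. For example, $-E=\frac{6b}{5}+\frac{2ab}{5}+\frac{2cd}{5}$ when $M\equiv I\bmod 5$, and $-E=5-\frac{6d}{5}+\frac{2ab}{5}+\frac{2cd}{5}$ when $M\equiv\begin{pmatrix}0&1\\-1&0\end{pmatrix}\bmod 5$.

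The only point needing care is that $G$ is a genuine meromorphic function on $\mathscr{H}$ and that the multiplier is well defined. For the first, the quotient must not vanish identically. By the Jacobi triple product (\ref{eqn:Jacobi-triple}), $\Theta$ is nonzero on $\mathscr{H}$: with $v=0$, $\epsilon=\frac35$ and $\epsilon'\in\{\frac35,\frac75\}$, no factor $1+e^{\pm\pi i\epsilon'}x^{2n-1\pm\epsilon}$ vanishes, because $|x|<1$ and $2n-1\pm\epsilon>0$. Hence $G$ is holomorphic on $\mathscr{H}$. For the second, the weight is an integer, so no branch issues arise. The multiplier is determined only modulo $2$ in the exponent of $\exp(\pi i\,\cdot)$, and the case expressions for $E$ were already reduced modulo $2$ in those earlier theorems. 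Since both sides are computed directly, I do not expect a real obstacle here.
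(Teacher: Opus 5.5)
Your proposal is correct and takes the paper's approach: the paper gives no separate proof, treating the statement as an immediate consequence of Theorem \ref{thm:(3/5,3/5),(3/5,7/5)-summary}. Dividing $\eta^6(M\tau)=\nu_\eta^6(M)(c\tau+d)^3\eta^6(\tau)$ by the theta-product transformation cancels $\nu_\eta^6(M)$, leaving weight $2$ and $g(M)=-E$ in each residue class, and your triple-product check that the denominator never vanishes on $\mathscr{H}$ is a useful detail the paper leaves implicit.
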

\noindent
For each $k\in \mathbb{Z},$ 
we have 
\begin{equation*}
\nu_{G^k}(M)
=
\exp
\left[
\frac{\pi i k}{5}
g(M)
\right]. 
\end{equation*}

\subsection{The case where $k\equiv 0 \bmod{10}$  }

\begin{theorem}
\label{thm:k=0-(3/5,3/5)-(3/5,7/5)}
{\it
Suppose that $k\equiv 0 \bmod{10}.$ 
Then, it follows that 
\begin{equation*}
\Ker \nu_{G^k}=\Gamma_{\theta,5}. 
\end{equation*}
}
\end{theorem}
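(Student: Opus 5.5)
The claim is that for $k\equiv 0 \bmod{10}$, $\nu_{G^k}(M)=1$ for every $M\in\Gamma_{\theta,5}$. I would argue directly from Theorem \ref{thm:(3/5,3/5),(3/5,7/5)-weight-2} and the remark after it, which give
\[
\nu_{G^k}(M)=\exp\left[\frac{\pi i k}{5}g(M)\right].
\]
Writing $k=10m$, this becomes $\exp[2\pi i m\, g(M)]$. So it suffices to show that $g(M)$ is an integer for every $M\in\Gamma_{\theta,5}$. Everything then reduces to an integrality check in each of the four congruence classes defining $\Gamma_{\theta,5}$.

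For $M\equiv \pm I \bmod 5$ we have $b\equiv c\equiv 0 \bmod 5$. Hence $b/5$, $ab/5$ and $cd/5$ are integers, and $g(M)=6b/5+2ab/5+2cd/5$ (respectively $24b/5+2ab/5+2cd/5$) lies in $\mathbb{Z}$.

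For $M\equiv \pm\begin{pmatrix}0&-1\\1&0\end{pmatrix}\bmod 5$ we have $a\equiv d\equiv 0\bmod 5$. Then $d/5$, $ab/5$ and $cd/5$ are integers, so $g(M)=5\pm 6d/5+2ab/5+2cd/5\in\mathbb{Z}$.

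Thus $\nu_{G^k}(M)=1$ on all of $\Gamma_{\theta,5}$. Since $\Ker\nu_{G^k}\subset\Gamma_{\theta,5}$ by definition, equality follows.

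The only point needing care is well-definedness. The four cases of $g$ must be disjoint, which holds because $I\not\equiv\pm\begin{pmatrix}0&-1\\1&0\end{pmatrix}\bmod 5$. Also, $\nu_G$ as given is a genuine function on $\Gamma_{\theta,5}$, since $\nu_\eta^6$ cancels between $\eta^6$ and the theta product by Theorem \ref{thm:(3/5,3/5),(3/5,7/5)-summary}. No real obstacle is expected; the argument mirrors Theorem \ref{thm:k=0-(1/5,1/5)-(1/5,9/5)}.
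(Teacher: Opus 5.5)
Your proof is correct and is exactly the argument the paper leaves implicit; the paper's proof is only ``It is obvious.'' Writing $k=10m$ gives $\nu_{G^k}(M)=\exp[2\pi i m\,g(M)]$, and $g(M)\in\mathbb{Z}$ on each of the four residue classes because either $b\equiv c\equiv 0$ or $a\equiv d\equiv 0 \bmod 5$, which is all that is needed.
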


\begin{proof}
It is obvious. 
\end{proof}

\subsection{The case where $k\equiv 5 \bmod{10}$  }

\begin{theorem}
\label{thm:k=5-(3/5,3/5)-(3/5,7/5)}
{\it
Suppose that $k\equiv 5 \bmod{10}.$ 
Then, it follows that 
\begin{equation*}
\Ker \nu_{G^k}=
\left\{
M
=
\begin{pmatrix}
a & b \\
c & d
\end{pmatrix}
\in
\Gamma_{\theta,5} \, | \,
M
\equiv 
\pm
\begin{pmatrix}
1 & 0 \\
0 & 1
\end{pmatrix} 
\bmod{5}
\right\}. 
\end{equation*}
}
\end{theorem}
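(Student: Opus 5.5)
We argue directly from Theorem \ref{thm:(3/5,3/5),(3/5,7/5)-weight-2}. It gives $\nu_{G^k}(M)=\exp\left[\frac{\pi i k}{5}g(M)\right]$. Write $k=5(2m+1)$ with $m\in\mathbb{Z}$. Then
\begin{equation*}
\nu_{G^k}(M)=\exp\left[\pi i (2m+1) g(M)\right],
\end{equation*}
so $M\in\Ker\nu_{G^k}$ exactly when $(2m+1)g(M)\equiv 0 \bmod 2$.

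The first step is to check that $g(M)$ is an integer for every $M\in\Gamma_{\theta,5}$, and to record its parity in each case.

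\textbf{Case $M\equiv\pm I \bmod 5$.} Here $b\equiv c\equiv 0 \bmod 5$, so $b/5$, $ab/5$ and $cd/5$ are integers. The quantity $g(M)$ is then an integer combination with even coefficients:
\begin{equation*}
g(M)=6\cdot\frac{b}{5}+2\cdot\frac{ab}{5}+2\cdot\frac{cd}{5}
\quad\mathrm{or}\quad
g(M)=24\cdot\frac{b}{5}+2\cdot\frac{ab}{5}+2\cdot\frac{cd}{5}.
\end{equation*}
Hence $g(M)$ is even, and $(2m+1)g(M)\equiv 0\bmod 2$. So every such $M$ lies in the kernel.

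\textbf{Case $M\equiv\pm\begin{pmatrix}0&-1\\1&0\end{pmatrix}\bmod 5$.} Here $a\equiv d\equiv 0\bmod 5$, so $d/5$, $ab/5$ and $cd/5$ are integers. Then
\begin{equation*}
g(M)=5\pm 6\cdot\frac{d}{5}+2\cdot\frac{ab}{5}+2\cdot\frac{cd}{5}
\end{equation*}
is odd, because $5$ is odd and every other term is even. Therefore $(2m+1)g(M)$ is odd and $\nu_{G^k}(M)=-1\neq 1$.

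Combining the two cases, $\Ker\nu_{G^k}$ consists exactly of the $M\in\Gamma_{\theta,5}$ with $M\equiv\pm I\bmod 5$, as claimed. By the definition of $\Gamma_{\theta,5}$ these four residue classes exhaust the group, so no other case arises.

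The main point to be careful about is the integrality and exact parity of $g(M)$ in each residue class. In particular, one must confirm that the divisibility by $5$ needed for $b/5$, $d/5$, $ab/5$ and $cd/5$ to be integers really holds in each class, which it does from the congruences. Beyond that the proof is as immediate as that of Theorem \ref{thm:k=5-(1/5,1/5)-(1/5,9/5)}.
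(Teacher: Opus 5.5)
Your proof is correct and is exactly the argument the paper leaves implicit when it says ``It is obvious'': for $k$ an odd multiple of $5$ one has $\nu_{G^k}(M)=(-1)^{g(M)}$, and $g(M)$ is an even integer when $M\equiv\pm I \bmod 5$ and an odd integer when $M\equiv\pm\begin{pmatrix}0&-1\\1&0\end{pmatrix} \bmod 5$. You also correctly handled the only point needing care, namely that $b/5$, $d/5$, $ab/5$ and $cd/5$ are integers in the relevant classes.
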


\begin{proof}
It is obvious. 
\end{proof}

\subsection{The case where $k\equiv \pm1, \pm 3 \bmod{10}$  }

\begin{theorem}
\label{thm:k=pm1-pm3-(3/5,3/5)-(3/5,7/5)}
{\it
Suppose that $k\equiv \pm1, \pm3 \bmod{10}.$ 
Then, it follows that 
\begin{align*}
\Ker \nu_{G^k}=&
\left\{
M
=
\begin{pmatrix}
a & b \\
c & d
\end{pmatrix}
\in
\Gamma_{\theta,5} \, | \, \,
\frac{b}{5} \equiv \frac{c}{5} \bmod{5} \,\,   (b,c \in 5 \mathbb{Z}) 
\right\}  \\
=&
\Ker \nu_{F^k}.
\end{align*}
}
\end{theorem}

\begin{proof}
The theorem can be proved in the same way as Theorem \ref{thm:k=pm1-pm3-(1/5,1/5)-(1/5,9/5)}. 
\end{proof}

\subsection{The case where $k\equiv \pm2, \pm 4 \bmod{10}$  }

\begin{theorem}
\label{thm:k=pm2-pm4-(3/5,3/5)-(3/5,7/5)}
{\it
Suppose that $k\equiv \pm2, \pm4 \bmod{10}.$ 
Then, it follows that 
\begin{align*}
\Ker \nu_{G^k}=&
\left\{
M
=
\begin{pmatrix}
a & b \\
c & d
\end{pmatrix}
\in
\Gamma_{\theta,5} \, | \, \,
\frac{b}{5} \equiv \frac{c}{5} \bmod{5} \,\,   (b,c \in 5 \mathbb{Z}) \,\,
\mathrm{or} \,\,
\frac{a}{5} \equiv -\frac{d}{5} \bmod{5} \,\,   (a,d \in 5 \mathbb{Z}) 
\right\}  \\
=&\Ker \nu_{F^k}. 
\end{align*}
}
\end{theorem}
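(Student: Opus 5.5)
The plan is to follow the proof of Theorem \ref{thm:k=pm2-pm4-(1/5,1/5)-(1/5,9/5)}, now applied to the function $g(M)$ of Theorem \ref{thm:(3/5,3/5),(3/5,7/5)-weight-2}. Write $k=2m$ with $m\equiv \pm1,\pm2 \bmod 5$. Then $\nu_{G^k}(M)=\exp\left[\frac{2\pi i m}{5}g(M)\right]$, so $M\in\Ker\nu_{G^k}$ if and only if $m\,g(M)\in 5\mathbb{Z}$. I first check that $g(M)\in\mathbb{Z}$ on each residue class: for $M\equiv\pm I \bmod 5$ we have $5\mid b$ and $5\mid c$; for $M\equiv\pm T \bmod 5$ we have $5\mid a$ and $5\mid d$. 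Since $m$ is prime to $5$, the kernel condition reduces to $g(M)\equiv 0 \bmod 5$. In particular it does not depend on $k$ within this class, and the integer $5$ occurring in the $\pm T$ cases drops out.

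Case $M\equiv \pm I\bmod 5$. Write $b=5b'$ and $c=5c'$. Then
\begin{equation*}
g(M)\equiv \lambda b' +2ab'+2c'd \bmod 5,
\end{equation*}
with $\lambda=6$ for $+I$ and $\lambda=24$ for $-I$. For $+I$ we have $a\equiv d\equiv 1$, so $g\equiv 6b'+2b'+2c'=8b'+2c'\equiv 2(c'-b') \bmod 5$. For $-I$ we have $a\equiv d\equiv -1$, so $g\equiv 24b'-2b'-2c'=22b'-2c'\equiv 2(b'-c')\bmod 5$. In both cases $g\equiv 0$ if and only if $b/5\equiv c/5 \bmod 5$.

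Case $M\equiv \pm T\bmod 5$. Write $a=5a'$ and $d=5d'$, with $b\equiv \mp1$ and $c\equiv\pm1 \bmod 5$. Then, modulo $5$,
\begin{equation*}
g(M)\equiv \mp 6d'+2a'b+2cd'.
\end{equation*}
For $+T$ ($b\equiv-1$, $c\equiv 1$) this gives $g\equiv -6d'-2a'+2d'=-4d'-2a'\equiv d'-2a'\equiv -2(a'+d')$, using $-4\equiv1$ and $d'\equiv -2\cdot(-3)d'$. Explicitly, $-2(a'+d')=-2a'-2d'$ and $-2d'\equiv 3d'$, and I will recheck this arithmetic carefully; the target is $g\equiv \text{unit}\cdot(a'+d')$. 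For $-T$ ($b\equiv1$, $c\equiv-1$) one gets $6d'+2a'-2d'=4d'+2a'$, which should likewise be $\equiv 2(a'+d')$ up to the $\lambda$-term. If the coefficients fail to match $a'+d'$ exactly, I will compare with the $F$ case, where $4d'$ combined with $\pm 8$ terms gave $a'\equiv -d'$, and redo the computation.

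This per-case verification is the main obstacle. Once it is done, the kernel conditions coincide with those obtained for $f$ in the proof of Theorem \ref{thm:k=pm2-pm4-(1/5,1/5)-(1/5,9/5)}. The explicit list modulo $25$ is then identical, by the same argument using $ad\equiv1$ or $bc\equiv-1 \bmod 25$, so $\Ker\nu_{G^k}=\Ker\nu_{F^k}$.
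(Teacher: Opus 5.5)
Your strategy is the paper's own: the paper proves this theorem ``in the same way as Theorem \ref{thm:k=pm2-pm4-(1/5,1/5)-(1/5,9/5)}''. Writing $k=2m$ with $5\nmid m$ and noting $g(M)\in\mathbb{Z}$, the kernel condition becomes $g(M)\equiv 0 \bmod 5$, checked class by class. Your reduction and your $\pm I$ computations ($g\equiv \pm 2(c'-b')$) are correct.

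The $\pm T$ case, which you yourself call the main obstacle, is not carried out, and what you wrote there is wrong. By Theorem \ref{thm:(3/5,3/5),(3/5,7/5)-weight-2}, the term $\frac{6d}{5}$ enters $g(M)$ with sign $+$ when $M\equiv\begin{pmatrix}0&-1\\1&0\end{pmatrix}$ and with sign $-$ when $M\equiv\begin{pmatrix}0&1\\-1&0\end{pmatrix}$. So the reduction is $g(M)\equiv \pm 6d'+2a'b+2cd' \bmod 5$, not $\mp 6d'+\cdots$. With your sign you get $-4d'-2a'\equiv d'-2a'$ and $4d'+2a'$, and both vanish exactly when $d'\equiv 2a' \bmod 5$. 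Neither is a unit multiple of $a'+d'$: your congruence $d'-2a'\equiv -2(a'+d')$ is false, since $-2(a'+d')\equiv 3d'-2a'$. So as written your computation yields a kernel different from the claimed one, and ``$-4\equiv 1$ and $d'\equiv -2\cdot(-3)d'$'' does not repair it.

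With the correct sign the computation closes:
\begin{itemize}
\item for $M\equiv\begin{pmatrix}0&-1\\1&0\end{pmatrix}$, $g\equiv 6d'-2a'+2d'=8d'-2a'\equiv -2(a'+d') \bmod 5$;
\item for $M\equiv\begin{pmatrix}0&1\\-1&0\end{pmatrix}$, $g\equiv -6d'+2a'-2d'=2a'-8d'\equiv 2(a'+d') \bmod 5$.
\end{itemize}
The constant $5$ is harmless because $k$ is even, as you noted. Hence on these classes $M\in\Ker\nu_{G^k}$ if and only if $a/5\equiv -d/5 \bmod 5$.

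Combined with your $\pm I$ computation, these are exactly the conditions for $f$ in Theorem \ref{thm:k=pm2-pm4-(1/5,1/5)-(1/5,9/5)}. There $f\equiv \pm 2(b'-c')$ on the classes of $\pm I$ and $f\equiv \pm 2(a'+d')$ on the classes of $\pm T$. So $\Ker\nu_{G^k}=\Ker\nu_{F^k}$, and no separate mod-$25$ list is needed. Until the sign is fixed and the $\pm T$ computation completed, the proposal does not establish that half of the statement.
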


\begin{proof}
The theorem can be proved in the same way as Theorem \ref{thm:k=pm2-pm4-(1/5,1/5)-(1/5,9/5)}. 
\end{proof}

\newpage

\section{Coset decomposition of $\Gamma(1) $ modulo $\Gamma_{\theta,5}$}
\label{sec:coset}

For
$
M
=
\begin{pmatrix}
a & b \\
c & d
\end{pmatrix}
\in
\Gamma(1),
$ 
we define an element $\lambda_N(M)$ of $SL(2, \mathbb{Z}/N \mathbb{Z} )$ by 
\begin{equation*}
\lambda_N(M)
=
\begin{pmatrix}
\bar{a} & \bar{b} \\
\bar{c} & \bar{d}
\end{pmatrix},
\end{equation*}
where 
$\bar{a} \equiv a \bmod{N}, $ 
$\bar{b} \equiv b \bmod{N}, $
$\bar{c} \equiv c \bmod{N}, $
$\bar{d} \equiv d \bmod{N}. $
In addition, 
we define 
\begin{equation*}
\Gamma(N)
=
\left\{
M
=
\begin{pmatrix}
a & b \\
c & d
\end{pmatrix}
\in
\Gamma(1) \, | \,
M\equiv 
 \begin{pmatrix}
1 & 0 \\
0 & 1
\end{pmatrix} 
\bmod{N}
\right\}.
\end{equation*}

Considering $\lambda_5$, 
as a  coset decomposition of $\Gamma(1)$ modulo $\Gamma(5),$ 
we may choose 
\begin{align*}
&
\pm 
\begin{pmatrix}
1 & 0 \\
0 & 1
\end{pmatrix}, 
\pm 
\begin{pmatrix}
0 & -1 \\
1 & 0
\end{pmatrix}, 
\pm 
\begin{pmatrix}
1 & 1 \\
0 & 1
\end{pmatrix}, 
\pm 
\begin{pmatrix}
0 & -1 \\
1 & 1
\end{pmatrix}, 
\pm 
\begin{pmatrix}
1 & 2 \\
0 & 1
\end{pmatrix}, 
\pm 
\begin{pmatrix}
0 & -1 \\
1 & 2
\end{pmatrix},   \\
&
\pm 
\begin{pmatrix}
1 & -2 \\
0 & 1
\end{pmatrix}, 
\pm 
\begin{pmatrix}
0 & -1 \\
1 & -2
\end{pmatrix},   
\pm 
\begin{pmatrix}
1 & -1 \\
0 & 1
\end{pmatrix}, 
\pm 
\begin{pmatrix}
0 & -1 \\
1 & -1
\end{pmatrix}, 
\pm 
\begin{pmatrix}
1 & 0 \\
-1 & 1
\end{pmatrix}, 
\pm 
\begin{pmatrix}
1 & -1 \\
1 & 0
\end{pmatrix},   \\
&
\pm 
\begin{pmatrix}
1 & 1 \\
-1 & 0
\end{pmatrix}, 
\pm 
\begin{pmatrix}
1 & 0 \\
1 & 1
\end{pmatrix}, 
\pm 
\begin{pmatrix}
1 & 2 \\
-1 & -1
\end{pmatrix}, 
\pm 
\begin{pmatrix}
1 & 1 \\
1 & 2
\end{pmatrix},   
\pm 
\begin{pmatrix}
1 & -2 \\
-1 & 3
\end{pmatrix}, 
\pm 
\begin{pmatrix}
1 & -3 \\
1 & -2
\end{pmatrix},   \\
&
\pm 
\begin{pmatrix}
1 & -1 \\
-1 & 2
\end{pmatrix}, 
\pm 
\begin{pmatrix}
1 & -2 \\
1 & -1
\end{pmatrix},   
\pm 
\begin{pmatrix}
1 & 0 \\
-2 & 1
\end{pmatrix}, 
\pm 
\begin{pmatrix}
2 & -1 \\
1 & 0
\end{pmatrix},   
\pm 
\begin{pmatrix}
1 & 1 \\
-2 & -1
\end{pmatrix}, 
\pm 
\begin{pmatrix}
2 & 1 \\
1 & 1
\end{pmatrix},   \\
&
\pm 
\begin{pmatrix}
1 & -1 \\
-2 & 3
\end{pmatrix}, 
\pm 
\begin{pmatrix}
2 & -3 \\
1 & -1
\end{pmatrix}, 
\pm 
\begin{pmatrix}
1 & -2 \\
-2 & 5
\end{pmatrix}, 
\pm 
\begin{pmatrix}
2 & -5 \\
1 & -2
\end{pmatrix}, 
\pm 
\begin{pmatrix}
1 & 2 \\
-2 & -3
\end{pmatrix}, 
\pm 
\begin{pmatrix}
2 & 3 \\
1 & 2
\end{pmatrix},   \\
&
\pm 
\begin{pmatrix}
1 & 0 \\
2 & 1
\end{pmatrix}, 
\pm 
\begin{pmatrix}
-2 & -1 \\
1 & 0
\end{pmatrix}, 
\pm 
\begin{pmatrix}
1 & 1 \\
2 & 3
\end{pmatrix}, 
\pm 
\begin{pmatrix}
-2 & -3 \\
1 & 1
\end{pmatrix}, 
\pm 
\begin{pmatrix}
1 & -1 \\
2 & -1
\end{pmatrix}, 
\pm 
\begin{pmatrix}
-2 & 1 \\
1 & -1
\end{pmatrix},   \\
&
\pm 
\begin{pmatrix}
1 & 2 \\
2 & 5
\end{pmatrix}, 
\pm 
\begin{pmatrix}
-2 & -5 \\
1 & 2
\end{pmatrix}, 
\pm 
\begin{pmatrix}
1 & -2 \\
2 & -3
\end{pmatrix}, 
\pm 
\begin{pmatrix}
-2 & 3 \\
1 & -2
\end{pmatrix}, 
\pm 
\begin{pmatrix}
-3 & 1 \\
2 & -1
\end{pmatrix}, 
\pm 
\begin{pmatrix}
-2 & 1 \\
-3 & 1
\end{pmatrix},   \\
&
\pm 
\begin{pmatrix}
-3 & -2 \\
2 & 1
\end{pmatrix}, 
\pm 
\begin{pmatrix}
-2 & -1 \\
-3 & -2
\end{pmatrix}, 
\pm 
\begin{pmatrix}
-3 & -5 \\
2 & 3
\end{pmatrix}, 
\pm 
\begin{pmatrix}
-2 & -3 \\
-3 & -5
\end{pmatrix}, 
\pm 
\begin{pmatrix}
-3 & 2 \\
-2 & 1
\end{pmatrix}, 
\pm 
\begin{pmatrix}
2 & -1 \\
-3 & 2
\end{pmatrix},  \\
&
\pm 
\begin{pmatrix}
-3 & 5 \\
-2 & 3
\end{pmatrix}, 
\pm 
\begin{pmatrix}
2 & -3 \\
-3 & 5
\end{pmatrix}, 
\pm 
\begin{pmatrix}
-5 & 2 \\
2 & -1
\end{pmatrix}, 
\pm 
\begin{pmatrix}
-2 & 1 \\
-5 & 2
\end{pmatrix}, 
\pm 
\begin{pmatrix}
-5 & 7 \\
2 & -3
\end{pmatrix}, 
\pm 
\begin{pmatrix}
-2 & 3 \\
-5 & 7
\end{pmatrix},   \\
&
\pm 
\begin{pmatrix}
5 & 12 \\
2 & 5
\end{pmatrix}, 
\pm 
\begin{pmatrix}
-2 & -5 \\
5 & 12
\end{pmatrix},   
\pm 
\begin{pmatrix}
5 & 2 \\
2 & 1
\end{pmatrix}, 
\pm 
\begin{pmatrix}
-2 & -1 \\
5 & 2
\end{pmatrix},   
\pm 
\begin{pmatrix}
5 & 7 \\
2 & 3
\end{pmatrix}, 
\pm 
\begin{pmatrix}
-2 & -3 \\
5 & 7
\end{pmatrix},   
\end{align*}
which implies that 
\begin{align*}
\Gamma(1)=&
\Gamma_{\theta,5} 
\cup
\Gamma_{\theta,5} 
\begin{pmatrix}
1 & 1 \\
0 & 1
\end{pmatrix} 
\cup
\Gamma_{\theta,5} 
\begin{pmatrix}
1 & -1 \\
0 & 1
\end{pmatrix} 
\cup
\Gamma_{\theta,5} 
\begin{pmatrix}
1 & 2 \\
0 & 1
\end{pmatrix} 
\cup
\Gamma_{\theta,5} 
\begin{pmatrix}
1 & -2 \\
0 & 1
\end{pmatrix}   \\
&
\cup
\Gamma_{\theta,5} 
\begin{pmatrix}
1 & 0 \\
-1 & 1
\end{pmatrix} 
\cup
\Gamma_{\theta,5} 
\begin{pmatrix}
1 & 1 \\
-1 & 0
\end{pmatrix} 
\cup
\Gamma_{\theta,5} 
\begin{pmatrix}
1 & 2 \\
-1 & -1
\end{pmatrix} 
\cup
\Gamma_{\theta,5} 
\begin{pmatrix}
1 & -2 \\
-1 & 3
\end{pmatrix} 
\cup
\Gamma_{\theta,5} 
\begin{pmatrix}
1 & -1 \\
-1 & 2
\end{pmatrix}   \\
&
\cup
\Gamma_{\theta,5} 
\begin{pmatrix}
1 & 0 \\
-2 & 1
\end{pmatrix}   
\cup
\Gamma_{\theta,5} 
\begin{pmatrix}
1 & 1 \\
-2 & -1
\end{pmatrix}   
\cup
\Gamma_{\theta,5} 
\begin{pmatrix}
1 & -1 \\
-2 & 3
\end{pmatrix}   
\cup
\Gamma_{\theta,5} 
\begin{pmatrix}
1 & -2 \\
-2 & 5
\end{pmatrix}   
\cup
\Gamma_{\theta,5} 
\begin{pmatrix}
1 & 2 \\
-2 & -3
\end{pmatrix}     \\
&
\cup
\Gamma_{\theta,5} 
\begin{pmatrix}
1 & 0 \\
2 & 1
\end{pmatrix}   
\cup
\Gamma_{\theta,5} 
\begin{pmatrix}
1 & 1 \\
2 & 3
\end{pmatrix}   
\cup
\Gamma_{\theta,5} 
\begin{pmatrix}
1 & -1 \\
2 & -1
\end{pmatrix}   
\cup
\Gamma_{\theta,5} 
\begin{pmatrix}
1 & 2 \\
2 & 5
\end{pmatrix}   
\cup
\Gamma_{\theta,5} 
\begin{pmatrix}
1 & -2 \\
2 & -3
\end{pmatrix}     \\
&
\cup
\Gamma_{\theta,5} 
\begin{pmatrix}
-3 & 1 \\
2 & -1
\end{pmatrix}  
\cup
\Gamma_{\theta,5} 
\begin{pmatrix}
-3 & -2 \\
2 & 1
\end{pmatrix}  
\cup
\Gamma_{\theta,5} 
\begin{pmatrix}
-3 & -5 \\
2 & 3
\end{pmatrix}  \\
&
\cup
\Gamma_{\theta,5} 
\begin{pmatrix}
-3 & 2 \\
-2 & 1
\end{pmatrix}  
\cup
\Gamma_{\theta,5} 
\begin{pmatrix}
-3 & 5 \\
-2 & 3
\end{pmatrix}    \\
&
\cup
\Gamma_{\theta,5} 
\begin{pmatrix}
-5 & 2 \\
2 & 1
\end{pmatrix}  
\cup
\Gamma_{\theta,5} 
\begin{pmatrix}
-5 & 7 \\
2 & -3
\end{pmatrix}  \\
&
\cup
\Gamma_{\theta,5} 
\begin{pmatrix}
5 & 12 \\
2 & 5
\end{pmatrix}
\cup
\Gamma_{\theta,5} 
\begin{pmatrix}
5 & 2 \\
2 & 1
\end{pmatrix}
\cup
\Gamma_{\theta,5} 
\begin{pmatrix}
5 & 7 \\
2 & 3
\end{pmatrix}.
\end{align*}
The parabolic points of $\Gamma_{\theta,5}$ are given by 
$
\displaystyle
\infty, \, -1, \, 
\pm\frac12, \, 
\pm\frac32, \, 
\pm\frac52. 
$




\end{document}